\newcommand{\hball}[2]{\ensuremath{\mathbf{B}^1(#1,#2)}}
\newcommand{\chball}[1]{\ensuremath{\mathbf{B}^1(0,#1)}}
\newcommand{\bchball}[1]{\ensuremath{\mathbf{B}^1(0,#1)}}
\newcommand{\mcf}{mean curvature flow}
\begin{document}

\newtheorem{mthm}{Theorem}
\newtheorem*{remark}{Remark}
\swapnumbers
\newtheorem{thm}{Theorem}[subsection]
\newtheorem{lem}[thm]{Lemma}
\newtheorem{prop}[thm]{Proposition}
\newtheorem{cor}[thm]{Corollary}
\newtheorem{secthm}{Theorem}[section]
\newtheorem{seclem}[secthm]{Lemma}
\newtheorem{secprop}[secthm]{Proposition}
\newtheorem{seccor}[secthm]{Corollary}
\theoremstyle{definition}
\newtheorem{defn}[thm]{Definition}
\newtheorem{exmp}[thm]{Example}
\newtheorem{rem}[thm]{Remark}
\newtheorem{secdefn}[secthm]{Definition}
\newtheorem{secexmp}[secthm]{Example}
\newtheorem{secrem}[secthm]{Remark}
\numberwithin{equation}{section}

\title{Almost graphical hypersurfaces become graphical under mean curvature flow}
\author{Ananda Lahiri}
\affil{
Max Planck Institute for Gravitational Physics\\
(Albert Einstein Institute)
}
\maketitle

\pagestyle{plain}
\setcounter{page}{1}
%
%
\begin{abstract}
Consider a mean curvature flow of hypersurfaces in Euclidean space, 
that is initially graphical inside a cylinder.
There exists a period of time during which
the flow is graphical inside the cylinder of half the radius.
Here we prove a lower bound on this period depending on the Lipschitz-constant 
of the initial graphical representation.
This is used to deal with a mean curvature flow that lies inside a slab
and is initially graphical inside a cylinder except for a small set.
We show that such a flow will become graphical inside the cylinder of half the radius.
The proofs are mainly based on White's regularity theorem.
\end{abstract}
\tableofcontents

\section{Introduction}
\subsection{Overview}
\paragraph{Setting}
Let $t_1\in\mathbb{R}$ and $t_2\in (t_1,\infty]$.
Consider a (properly embedded) mean curvature flow $(M_t)_{t\in [t_1,t_2)}$
in some open set $U\subset\mathbb{R}^{\mathbf{n}+\mathbf{k}}$.
By this we mean the following:
There exist a submanifold $M$ of $\mathbb{R}^{\mathbf{n}+\mathbf{k}}$
and a smooth function $\Psi:[t_1,t_2)\times M\to\mathbb{R}^{\mathbf{n}+\mathbf{k}}$ 
such that $\Psi_t:=\Psi(t,\cdot)$ is 
an embedding with $M_t=\Psi_t[M]$ for all $t\in [t_1,t_2)$. 
Also $\Psi$ satisfies
\begin{align}
\label{meancurvflowa}
\partial_t\Psi(t,p)=\mathbf{H}(M_t,\Psi(t,p))
\end{align}
for all $(t,p)\in (t_1,t_2)\times M$,
where $\mathbf{H}$ denotes the mean curvature vector.
Furthermore we assume there exists an open set $V\subset\mathbb{R}^{\mathbf{n}+\mathbf{k}}$
with $\overline{U}\subset V$, $\mathscr{H}^{\mathbf{n}}(V\cap M_t)<\infty$ and 
$\partial M_t\cap V=\emptyset$ for all $t\in [t_1,t_2)$.

Consider open sets $\Omega_1\subset\mathbb{R}^{\mathbf{n}}$, $\Omega_2\subset\mathbb{R}^{\mathbf{k}}$.
The manifold $M_t$ is called graphical inside the cylinder $\Omega_1\times \Omega_2$,
if there exists an $f_t:\Omega_1\to\mathbb{R}^{\mathbf{k}}$
such that
\begin{align}
\label{localgraph}
M_{t}\cap (\Omega_1\times \Omega_2)
=
\mathrm{graph}(f_t).
\end{align}
Let $t_0\in [t_1,t_2)$, $x_0\in M_{t_0}$ and $\epsilon\in (0,1)$.
One can choose coordinates such that $x_0$ is the origin and the tangent space is $\mathbb{R}^{\mathbf{n}}\times\{0\}^{\mathbf{k}}$.
Parametrizing over the tangent space yields an $r>0$
such that $M_t$ is graphical inside the open cylinder $\mathbf{B}^{\mathbf{n}}(0,2r)\times\mathbf{B}^{\mathbf{k}}(0,2r)$ 
for all $t\in [t_0-r,t_0+r]\cap[t_1,t_2)$
and some parametrization with $\sup|f_{t_0}|\leq\epsilon r$, $\sup|Df_{t_0}|\leq\epsilon$.
In particular \eqref{localgraph} always holds locally for small enough $\Omega_1,\Omega_2$.
Note that lower bounds on $r$ can be related to curvature estimates on $(M_t)$.

\paragraph{Problem 1}
Consider a mean curvature flow $(M_t)_{t\in [0,\tau)}$.
Assume $M_0$ is graphical inside the open cylinder
$\mathbf{B}^{\mathbf{n}}(0,2)\times\mathbf{B}^{\mathbf{k}}(0,2)$ 
for some graphical representation $f_0$.
By continuity in $t$,
$M_t$ will remain graphical inside $\mathbf{B}^{\mathbf{n}}(0,1)\times\mathbf{B}^{\mathbf{k}}(0,1)$ for $t\in [0,\kappa)$
for some $\kappa>0$.
We are interested in lower bounds on $\kappa$
that only depend on $\mathbf{n}$, $\mathbf{k}$ and $\mathrm{lip}(f_0)$.

\paragraph{Problem 2}
Again consider a mean curvature flow $(M_t)_{t\in [0,\tau)}$.
Assume $M_0$ is almost graphical inside the open cylinder
$\mathbf{B}^{\mathbf{n}}(0,2)\times\mathbf{B}^{\mathbf{k}}(0,2)$
and also lies in a narrow slab $M_0\subset\mathbb{R}^{\mathbf{n}}\times\mathbf{B}^{\mathbf{k}}(0,\gamma)$.
By almost graphical we mean that there exist an $E\subset\mathbf{B}^{\mathbf{n}}(0,2)$
and an $f_0:\mathbf{B}^{\mathbf{n}}(0,2)\setminus E\to\mathbb{R}^{\mathbf{k}}$ such that
\begin{align*}
M_{0}\cap\left((\mathbf{B}^{\mathbf{n}}(0,2)\setminus E)\times\mathbf{B}^{\mathbf{k}}(0,2)\right)
=
\mathrm{graph}(f_0)
\end{align*}
and 
$E$ is small in some sense.
We are looking for conditions on $M_0$ and $E$ such that 
$M_t$ is graphical inside 
$\mathbf{B}^{\mathbf{n}}(0,1)\times\mathbf{B}^{\mathbf{k}}(0,1)$ 
for $t\in [\epsilon,\kappa)$ for some $0<\epsilon<\kappa\leq\tau$.
\\ \\
Note that if $\mathrm{lip}(f_0)$ is small,
there already exist satisfying statements for both problems, see below.
Hence we are especially interested in the case $\mathrm{lip}(f_0)\in [1,\infty)$.

\paragraph{History and known results}
The mean curvature flow was introduced by Brakke \cite{brakke} 
in the setting of geometric measure theory.
Starting with Huisken's work \cite{huisken1} the smooth mean curvature flow
came more into focus.
Graphical mean curvature flow of hypersurfaces,
was studied by Ecker and Huisken in \cite{eckerh2}, \cite{eckerh1}.
Among other things, they proved a local gradient bound \cite[2.1]{eckerh1}
and a local curvature estimate \cite[3.1, 3.2]{eckerh1},
under the assumption that the flow is graphical over a period of time.
These estimates will be of importance for our results.
Wang \cite{wang} generalised the estimates from Ecker and Huisken \cite[2.1, 3.1]{eckerh1} 
to higher co-dimension in \cite[3.2, 4.1]{wang},
where one additionally has to assume that the Lipschitz-constant of the graphical representation is small.
Wang observed, that such an extra assumption should be necessary 
in view of the minimal cone constructed by Lawson and Ossermann \cite[7.1]{lawsonO}.

The local regularity theorem by White \cite{white}
yields curvature bounds in case the Gaussian density ratios are close to one.
In particular it can be used for initially not graphical mean curvature flows.
Ilmanen, Neves and Schulze 
used White's regularity theorem to solve problem 1
if $\mathrm{lip}(f_0)\leq l_0(\mathbf{n},\mathbf{k})$, see \cite[1.5]{ilnesch}.
Changing the proof of \cite[1.5]{ilnesch} a bit yields a solution to problem 2,
if one assumes slab-height $\gamma$, $\mathrm{lip}(f_0)$ and $\mathscr{H}^{\mathbf{n}}(M_0\cap (E\times\mathbf{B}^{\mathbf{k}}(0,2)))$ 
to be bounded by some $\gamma_0(\epsilon,\mathbf{n},\mathbf{k})$.

Curvature bounds by Chen and Yin yield a solution to problem 1,
if $\sup|D^2f_0|\leq K$ and $\sup|Df_0|\leq l_1(K,\mathbf{n},\mathbf{k})$, see \cite[7.5]{chenyin}.
Moreover curvature bounds by Brendle and Huisken yield a solution to problem 1,
if $\|f_0\|_{C^4}\leq\beta_0$ for some constant $\beta_0$, see \cite[2.2]{brendleh}.

\paragraph{Results of the present article}
In case the co-dimension is one, 
we will answer problems 1 and 2 without assuming small gradient of $f_0$.
Referring to problem 1 we prove the following:
%
%
\begin{mthm}
\label{mainresult2}
For every $L\in (0,\infty)$ exists an $\kappa_L=\kappa_L(L,\mathbf{n})$
such that the following holds:
Let $(M_t)_{t\in [0,\tau)}$ be a mean curvature flow in $\mathbf{B}^{\mathbf{n}}(0,2)\times\chball{2}$.
Suppose there exists an $f_0:\mathbf{B}^{\mathbf{n}}(0,2)\to\mathbb{R}$
with $\mathrm{lip}(f_0)\leq L$ and $\sup|f_0|\leq\frac{1}{2}$
\begin{align*}
M_{0}\cap\mathbf{B}^{\mathbf{n}}(0,2)\times\chball{2}
=\mathrm{graph}(f_0)
\end{align*}
Set $I:=(0,\kappa_L)\cap (0,\tau)$.
Then there exists a $g:I\times\mathbf{B}^{\mathbf{n}}(0,1)\to\mathbb{R}$
with 
\begin{align*}
M_{t}\cap\mathbf{B}^{\mathbf{n}}(0,1)\times\chball{1}
=\mathrm{graph}(g(t,\cdot))
\end{align*}
for all $t\in I$.
See also Theorem \ref{stayinggraphicalthm} for a more general version.
\end{mthm}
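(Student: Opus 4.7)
I would prove the theorem by contradiction via a Brakke-flow compactness argument, combining the Ecker-Huisken theory of mean curvature flow of entire Lipschitz graphs with White's local regularity theorem. The plan is to show that if graphicality could be lost at arbitrarily early times then, after passing to a subsequential limit, one would obtain a mean curvature flow whose initial data is an $L$-Lipschitz graph---which, after McShane extension, admits by Ecker-Huisken a smooth entire graphical flow for positive time. White's regularity theorem is then used to upgrade the weak convergence to smooth convergence, propagating graphicality back to the approximating flows and contradicting the loss of graphicality.

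\textbf{Execution.} From $\mathrm{lip}(f_0)\le L$ and $\sup|f_0|\le\tfrac12$ one obtains, by a direct integral computation and Huisken's monotonicity, uniform Gaussian density bounds $\Theta(M,\cdot,\cdot)\le\sqrt{1+L^2}$; avoidance against horizontal hyperplanes yields slab confinement for a short time depending only on $L,\mathbf{n}$. Suppose the theorem fails for some $L$: extract a sequence of flows $(M^j_t)$ satisfying the hypotheses with initial data $f_0^j$ and first failure times $t_j\to 0$ at points $x_j\in\mathbf{B}^{\mathbf{n}}(0,1)\times\bchball{1}$ where $M^j_{t_j}$ admits a vertical tangent hyperplane. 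By Arzel\`a-Ascoli, $f_0^j$ subconverges uniformly on $\mathbf{B}^{\mathbf{n}}(0,2)$ to an $L$-Lipschitz $f_0^\infty$; by Brakke-flow compactness (using the uniform density bound and slab confinement), $(M^j_t)$ subconverges to a Brakke flow $(M^\infty_t)$ with $M^\infty_0=\mathrm{graph}(f_0^\infty)$. Extend $f_0^\infty$ by McShane to an entire $L$-Lipschitz $\tilde f_0:\mathbb{R}^{\mathbf{n}}\to\mathbb{R}$ and flow by Ecker-Huisken to produce a smooth flow $(\tilde M_t)_{t\ge 0}$ that remains an entire graph and is graphical inside $\mathbf{B}^{\mathbf{n}}(0,1)\times\bchball{1}$ for $t\in[0,\kappa^*(L,\mathbf{n}))$. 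Identifying $M^\infty_t$ with $\tilde M_t$ in the smaller cylinder for $t\in(0,\kappa^*)$ and upgrading Brakke convergence to smooth convergence near $(x_\infty,0)$ via White, the approximants $M^j_{t_j}$ must be smooth and graphical near $x_j$ for $j$ large, contradicting the vertical tangent there.

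\textbf{Main obstacle.} The hard part is the identification $M^\infty_t\cap(\mathbf{B}^{\mathbf{n}}(0,1)\times\bchball{1})=\tilde M_t\cap(\mathbf{B}^{\mathbf{n}}(0,1)\times\bchball{1})$ and the subsequent passage from Brakke to smooth convergence; this is precisely where White's regularity theorem enters. The initial data agreement on $\mathbf{B}^{\mathbf{n}}(0,2)\times\bchball{2}$, combined with Huisken's monotonicity, controls the Gaussian densities of $M^\infty$ and $\tilde M$ at interior spacetime points: the discrepancy is weighted by $\exp(-c/t)$ coming from the mismatch of initial data outside the large cylinder, and so is negligible as $t\downarrow 0$. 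Since $\tilde M_t$ is smooth with Gaussian density ratios $\to 1$ at small scales at every interior point, this forces the density of $M^\infty_t$ also to approach $1$, so White's theorem supplies uniform curvature bounds, first on $M^\infty$ (pinning it to $\tilde M$ in the smaller cylinder via codimension-one uniqueness) and then on $M^j$ for $j$ large (upgrading Brakke convergence to smooth convergence on compact subsets). Secondary technical issues---ensuring the McShane extension preserves $\sup|f_0|\le\tfrac12$, handling the cylindrical boundary behaviour of the original flows in Brakke compactness, and justifying the avoidance-based slab confinement---are standard but require care.
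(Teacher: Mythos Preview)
Your approach is genuinely different from the paper's, and the step you flag as the ``main obstacle'' is in fact a real gap.

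The paper does not argue by contradiction or compactness at all. It proves Theorem~\ref{mainresult2} directly via the more general Theorem~\ref{stayinggraphicalthm}, whose proof is an iteration built on Lemma~\ref{maintaingraphlem}: if the flow has been graphical with $\sup|Du|\le L_1$ on a cylinder over a time interval of length $R_1^2$, then Ecker--Huisken's interior curvature estimate (Theorem~\ref{localcurvethm}) gives $|A(M_{s_0},\cdot)|\le C L_1^2$ on a slightly smaller cylinder. One then covers $M_{s_0}$ by balls of radius $\sim c_L$ and parametrises over the \emph{tangent plane} at each centre, where the gradient is zero; this puts one in the small-gradient regime, and Proposition~\ref{smallcurvestaygraphlem} (which wraps Theorem~\ref{flatstaygraphthm}, i.e.\ the Ilmanen--Neves--Schulze application of White) propagates graphicality forward by a fixed multiplicative factor in time. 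Ecker--Huisken's local gradient estimate (Theorem~\ref{localgradthm}) keeps $\sup|Dv|\le 4L$ so the iteration closes. The crucial point is that White's theorem is only ever invoked \emph{after} the codimension-one Ecker--Huisken estimates have reduced matters to gradient $\le l_0$.

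Your scheme, by contrast, needs White's regularity (or an equivalent uniqueness statement) directly on an $L$-Lipschitz initial configuration, and this is where it breaks. White's theorem requires Gaussian density ratios $\le 1+d_0$, but for an $L$-Lipschitz graph the density ratio at time $0$ and scale $\sqrt{s}$ is only bounded by $\sqrt{1+L^2}$, which is far from $1$ when $L$ is large. Your fix is to compare the densities of $M^\infty$ with those of the smooth Ecker--Huisken flow $\tilde M$, claiming the discrepancy is $O(e^{-c/t})$; but that exponential only accounts for the mismatch \emph{outside} the big cylinder. Inside the cylinder $M^\infty_0$ and $\tilde M_0$ coincide with the same Lipschitz graph, whose Gaussian integral at the relevant scales is still of order $\sqrt{1+L^2}$. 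Knowing that $\tilde M_t$ is smooth for $t>0$ (hence has densities $\to 1$ at small scales) does not transfer to $M^\infty$ unless you already know $M^\infty=\tilde M$ in the small cylinder, which is exactly what you are trying to establish; the argument is circular as written. Even granting the identification, upgrading Brakke convergence of $M^j$ to smooth convergence near $(t_j,x_j)$ again requires uniform density bounds for the $M^j$ at times $\le t_j\to 0$, and these are not close to $1$ either, since the $f_0^j$ are only uniformly $L$-Lipschitz. A compactness proof along your lines would therefore require, as a black box, a local uniqueness theorem for Brakke flows starting from $L$-Lipschitz graphs---but that is essentially equivalent in strength to the theorem you set out to prove.
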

\begin{remark}
\begin{enumerate}
\item
The local estimates by Ecker and Huiksen \cite[2.1, 3.2(ii)]{eckerh1} yield estimates on $|Dg|$ and $|D^2g|$.
\item
See Example \eqref{shrinkingsquareexmp} for a mean curvature flow that is initially graphical inside
$\mathbf{B}^{\mathbf{n}}(0,2)\times\chball{2}$ with gradient $0$,
but becomes non-graphical inside $\mathbf{B}^{\mathbf{n}}(0,2)\times\chball{2}$ arbitrarily fast.
Also the flow becomes non-graphical inside $\mathbf{B}^{\mathbf{n}}(0,1)\times\chball{1}$ after finite time.
\item
It is not clear if the dependency of $\kappa_L$ on $L$ is necessary.
\item
Also it is not clear, whether this result can be adopted to higher co-dimension.
In particular the cone constructed by Lawson and Ossermann \cite[7.1]{lawsonO},
has to be taken into account.
\end{enumerate}
\end{remark}

Referring to problem 2 we prove the following:
%
%
\begin{mthm}
\label{mainresult4}
There exists a $\kappa_2=\kappa_2(\mathbf{n})$ 
and for all $L\in (0,\infty)$, $\epsilon\in (0,\kappa_2)$ 
exists a $\gamma_2=\gamma_2(L,\epsilon,\mathbf{n})$
such that the following holds:
Let $(M_t)_{t\in [0,\tau)}$ be a mean curvature flow in $\mathbf{B}^{\mathbf{n}+1}(0,3)$.
Assume
\begin{align*}
M_{0}\cap\mathbf{B}^{\mathbf{n}+1}(0,3)
\subset \mathbb{R}^{\mathbf{n}}\times\chball{\gamma_2}.
\end{align*}
Suppose there exist a closed subset $E\subset\overline{\mathbf{B}^{\mathbf{n}-1}(0,2)}\times\chball{\gamma_2}$
and a function $f_0:\mathbf{B}^{\mathbf{n}}(0,2)\setminus E\to\mathbb{R}$ with $\mathrm{lip}(f_0)\leq L$
such that
\begin{align*}
M_{0}\cap\left((\mathbf{B}^{\mathbf{n}}(0,2)\setminus E)\times\chball{2}\right)
&=\mathrm{graph}(f_0),
\\
\mathscr{H}^{\mathbf{n}}\left(M_{0}\cap\left(E\times\chball{2}\right)\right)
&\leq\gamma_2.
\end{align*}
Set $I:=[\epsilon,\kappa_2)\cap (0,\tau)$.
Then there exists a 
$g:I\times\mathbf{B}^{\mathbf{n}}(0,1)\to\mathbb{R}$
with
\begin{align*}
M_{t}\cap\left(\mathbf{B}^{\mathbf{n}}(0,1)\times\chball{1}\right)
=\mathrm{graph}(g(t,\cdot))
\end{align*}
for all $t\in I$.
See also Theorem \ref{becominggraphthm} for a more general version,
which includes estimates for $|g|$, $|Dg|$ and $|D^2g|$.
\end{mthm}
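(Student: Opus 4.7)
The plan is a contradiction argument combining Brakke compactness with White's local regularity theorem. Assume the conclusion fails for some $L\in(0,\infty)$ and $\epsilon\in(0,\kappa_2)$, where $\kappa_2=\kappa_2(\mathbf{n})$ is a small universal constant to be chosen. Then one extracts sequences $\gamma_2^{(k)}\searrow 0$, mean curvature flows $(M_t^{(k)})_{t\in[0,\tau_k)}$ in $\mathbf{B}^{\mathbf{n}+1}(0,3)$, closed sets $E^{(k)}\subset\overline{\mathbf{B}^{\mathbf{n}-1}(0,2)}\times\chball{\gamma_2^{(k)}}$, and graph functions $f_0^{(k)}$ satisfying the hypotheses with $\gamma_2=\gamma_2^{(k)}$, together with times $t_k\in[\epsilon,\kappa_2)\cap(0,\tau_k)$ at which $M_{t_k}^{(k)}\cap(\mathbf{B}^{\mathbf{n}}(0,1)\times\chball{1})$ fails to be a graph over $\mathbf{B}^{\mathbf{n}}(0,1)$.

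First I establish uniform local mass bounds. The initial graphical mass is at most $(1+L^2)^{\mathbf{n}/2}\mathscr{H}^{\mathbf{n}}(\mathbf{B}^{\mathbf{n}}(0,2))$, the non-graphical part at most $\gamma_2^{(k)}$, and the avoidance principle applied to translated horizontal planes forces $M_t^{(k)}$ to remain in a slab of thickness $O(\gamma_2^{(k)}+\sqrt{t})$. Together with Huisken monotonicity this yields uniform local mass bounds on compact subsets of $\mathbf{B}^{\mathbf{n}+1}(0,5/2)$. Brakke compactness then produces a subsequential limit Brakke flow $(\mathcal{M}_t)_{t\geq 0}$ whose support lies in the plane $P:=\mathbb{R}^{\mathbf{n}}\times\{0\}$. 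Being supported on a fixed hyperplane, $\mathcal{M}_t$ has vanishing mean curvature and is stationary, so by integer rectifiability $\mathcal{M}_t$ agrees locally with $\rho(t)\cdot P$ for an integer-valued, non-increasing function $\rho$.

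The central step is to show $\rho(t)=1$ for all $t>0$. Since $\mathscr{H}^{\mathbf{n}}(E^{(k)})\leq C\gamma_2^{(k)}\to 0$, the projection of $E^{(k)}$ to the base $\mathbb{R}^{\mathbf{n}}$ has vanishing Hausdorff measure. Fix any $x_0'\in\mathbf{B}^{\mathbf{n}}(0,2)$ at positive distance from the limit of $E^{(k)}$; for large $k$ there is a small ball $\mathbf{B}^{\mathbf{n}}(x_0',r)$ disjoint from $E^{(k)}$ on which $M_0^{(k)}$ is graphical with Lipschitz bound $L$ and height at most $\gamma_2^{(k)}$. Theorem~\ref{mainresult2} applied on this smaller cylinder gives that $M_t^{(k)}$ remains graphical over $\mathbf{B}^{\mathbf{n}}(x_0',r/2)$ for a time depending only on $L$, $r$, and $\mathbf{n}$, and the Ecker-Huisken interior gradient and curvature estimates pass to the limit. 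Since $|f_0^{(k)}|\leq\gamma_2^{(k)}\to 0$, these limiting graphs sit in an arbitrarily thin slab, forcing the limit near $x_0'$ to be a smooth piece of $P$ with multiplicity one. By spatial constancy of $\rho$ on the connected plane $P\cap\mathbf{B}^{\mathbf{n}+1}(0,5/2)$, $\rho(t)=1$ throughout.

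Once $\mathcal{M}_t=P$ is identified for $t\in(0,\kappa_2)$, White's local regularity theorem applies to the sequence: for $k$ large, the Gaussian density ratios of $M_t^{(k)}$ at every $(x,t)$ with $x\in\overline{\mathbf{B}^{\mathbf{n}}(0,1)\times\chball{1}}$ and $t\in[\epsilon,\kappa_2)$ are close to $1$ at some universal scale, yielding uniform curvature estimates and smooth convergence $M_t^{(k)}\to P$ on compact subsets. For $k$ large, $M_{t_k}^{(k)}\cap(\mathbf{B}^{\mathbf{n}}(0,1)\times\chball{1})$ is then a $C^{2,\alpha}$ graph with small gradient, contradicting the failure of graphicality. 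The value $\kappa_2=\kappa_2(\mathbf{n})$ is dictated by the scale at which White's theorem applies inside the ambient ball $\mathbf{B}^{\mathbf{n}+1}(0,3)$. The hardest part is the multiplicity-one identification, because a large Lipschitz bound $L$ a priori permits rapidly oscillating initial graphs whose varifold limits carry density greater than one; the resolution combines Theorem~\ref{mainresult2} on good cylinders avoiding $E^{(k)}$ with the vanishing thickness of $E^{(k)}$ and the slab collapse to rule out a higher-multiplicity limit.
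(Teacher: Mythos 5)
Your proposal takes a genuinely different route from the paper: a contradiction/compactness argument with Brakke compactness and a limit-identification step, versus the paper's direct quantitative argument that first applies Lemma~\ref{flatgraphlem} (i.e.\ Theorem~\ref{mainresult2} plus the slab condition and the Ecker--Huisken curvature estimate) to flatten the graphical part at a small time $s_1\sim\gamma_2$, then controls the measure of the non-graphical part via Proposition~\ref{localisingprop}, and finally invokes Proposition~\ref{flatbecomegraphprop}. Such a contradiction strategy is not unreasonable, and you correctly identify the central difficulty (multiplicity one despite large $L$). But as written the argument has a genuine gap at the limit-identification step.

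The gap is the claim that the subsequential limit Brakke flow $(\mathcal{M}_t)_{t\geq 0}$ is \emph{supported in the plane $P$}. The flow is only given inside $\mathbf{B}^{\mathbf{n}+1}(0,3)$, so the avoidance principle against the planes $\{x_{\mathbf{n}+1}=\pm\gamma_2\}$ is not available; one only has the local height bound of Corollary~\ref{heightboundcor}, which yields $M_t^{(k)}\cap\mathbf{C}(0,R,R)\subset\mathbf{C}(0,R,\gamma_2^{(k)}+C(t)/R)$ with linear growth in $t$. For fixed $t\geq\epsilon>0$ this slab thickness does \emph{not} collapse as $k\to\infty$, so nothing forces $\mathrm{spt}\,\mathcal{M}_t\subset P$ for $t>0$, and the subsequent steps (stationarity of $\mathcal{M}_t$, $\mathcal{M}_t=\rho(t)\cdot P$, spatial constancy of $\rho$) are not justified. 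The same issue resurfaces in the Gaussian-density step: the density ratio of $M_t^{(k)}$ at $t\geq\epsilon$, computed from $t_1=0$, is bounded below roughly by the Jacobian factor $\sqrt{1+L^2}$ of the initial Lipschitz graph, which is not close to $1$ for $L\geq 1$. The paper avoids both problems precisely by inserting an intermediate time $s_1=\Theta(\gamma_2)$ at which the graphical part has gradient $O(\gamma_2^{1/4})$ (Lemma~\ref{flatgraphlem}), and by \emph{quantitatively} bounding the mass of the shrunken non-graphical part at time $s_1$ via Proposition~\ref{localisingprop}; it then applies the White-type regularity in the form of Proposition~\ref{flatbecomegraphprop} with $t_1=s_1$, not $t_1=0$. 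Your sentence about applying Theorem~\ref{mainresult2} on good cylinders and passing Ecker--Huisken estimates to the limit is on the right track, but to close the argument you would have to convert it into a quantitative statement about the density of $\mathcal{M}_{s_1}$ at a small but positive $s_1$, handle points of the base near $\mathbb{R}^{\mathbf{n}-1}\times\{0\}$ where no good cylinder avoiding $E^{(k)}$ exists, and only then invoke White's theorem from that intermediate time; at that point you would essentially be reconstructing the paper's proof rather than bypassing it.
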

\begin{remark}
As for Theorem \ref{mainresult2} it is not clear,
if the dependency of $\gamma_2$ on $L$ is necessary
and whether the result can be adopted to higher co-dimension.
\end{remark}

\paragraph{Outline of the proof of Theorem \ref{mainresult2}}
First note that for arbitrary $\delta_0$ we remain graphical inside 
$\mathbf{B}^{\mathbf{n}}(0,2-\delta_0)\times\bchball{2-\delta_0}$ 
for some period of time $[0,s_0]$.
Let $c_L$ denote values that only depend on $L$ and $\mathbf{n}$.
Suppose we have graphical representation inside 
$\mathbf{B}^{\mathbf{n}}(0,r)\times\chball{r}$ 
for some period of time $[0,s]$.
Using the curvature bound by Ecker and Huisken \cite[3.2(ii)]{eckerh1} 
we obtain that inside $\mathbf{B}^{\mathbf{n}}(0,r-\sqrt{s})\times\bchball{r-\sqrt{s}}$
the curvature of $M_s$ is bounded by $(c_L\sqrt{s})^{-1}$.
This lets us cover $M_s$ with balls of radius $c_Ll_0\sqrt{s}$,
such that we can parametrize over the tangent space
inside each of this balls with a parametrization with gradient smaller than $l_0$.
Here $l_0$ denotes a constant we obtain from the regularity result 
by Ilmanen, Neves and Schulze \cite[1.5]{ilnesch}.
Hence we can use \cite[1.5]{ilnesch} in each of the balls,
to maintain the graphical representation over the tangent spaces 
for a period of time $[s,(1+c_L)s]$.
Combining these graphical representations we see that $M_t$ is graphical inside
$\mathbf{B}^{\mathbf{n}}(0,r-\sqrt{s})\times\bchball{r-\sqrt{s}}$ for all $t\in[s,(1+c_L)s]$. 
Thus we can maintain graphical representation inside a shrinking cylinder,
where the shrinking is controlled by $L$ and $\mathbf{n}$.
This yields the result.
Note that in the process we use that the gradient is uniformly bounded by $4L$,
which is provided by estimate \cite[2.1]{eckerh1} from Ecker and Huisken.

\paragraph{Outline of the proof of Theorem \ref{mainresult4}}
Using Theorem \ref{mainresult2} we see that for some small $s_1\in (0,\epsilon)$
the manifold $M_{s_1}$ is still almost graphical inside 
$\mathbf{B}^{\mathbf{n}}(0,\frac{3}{2})\times\chball{\frac{3}{2}}$,
where the possibly non-graphical part has increased to some $\tilde{E}\times\chball{\frac{3}{2}}$.
By our initial assumptions we can bound
$\mathscr{H}^{\mathbf{n}}(M_{s_1}\cap(\tilde{E}\times\chball{\frac{3}{2}})$
in terms of $\gamma_2$.
Also, in view of the slab condition,
using the curvature bound by Ecker and Huisken \cite[3.2(ii)]{eckerh1} yields,
that the gradient on the graphical part of $M_{s_1}$ is bounded in terms of $\gamma_2$ as well.
Thus choosing $\gamma_2$ small enough,
we are almost in the situation to use 
the regularity result by Ilmanen, Neves and Schulze \cite[1.5]{ilnesch}.
Modifying the proof of \cite[1.5]{ilnesch} a bit,
yields the result.

\paragraph{Organisation of the paper}
In section \ref{notation} the notation is introduced.
This is followed by recalling some basic properties 
and an application of White's regularity theorem in section \ref{localregularity}.
Then we show, in section \ref{smallgradient}, that the work of Ilmanen, Neves and Schulze \cite{ilnesch}
already solves problems 1 and 2 in the case of small gradient.
This also yields a solution to problem 1 for bounded curvature,
presented in section \eqref{smallcurve}.
Afterwards we restrict ourselves to the case of one co-dimension.
In section \ref{stayinggraphical} we prove Theorem \ref{mainresult2},
which solves problem 1 for bounded gradient.
Finally in section \ref{becomegraph}
we prove Theorem \ref{mainresult4},
which solves problem 2 for bounded gradient.

%
%
%
%
%
%
\subsection{Notation and definitions}
\label{notation}
We set $\mathbb{R}^{+}:=\{x\in \mathbb{R},x\geq 0\}$,
$\mathbb{N}:=\{1,2,3,\ldots\}$
and $(a)_+:=\max\{a,0\}$ for $a\in\mathbb{R}$.
We fix $\mathbf{n},\mathbf{k}\in\mathbb{N}$.
Quantities that only depend on $\mathbf{n}$ and/or $\mathbf{k}$ are considered constant.
Such a constant may be denoted by $C$ or $c$,
in particular the value of $C$ and $c$ may change in each line.
We denote the canonical basis of $\mathbb{R}^{\mathbf{n}+\mathbf{k}}$ and $\mathbb{R}^{\mathbf{n}}$
by $(\mathbf{e}_i)_{1\leq i\leq\mathbf{n}+\mathbf{k}}$ and  $(\hat{\mathbf{e}}_i)_{1\leq i\leq\mathbf{n}}$ respectively.

Let $n,k\in\mathbb{N}$. 
Let $A:\mathbb{R}^n\to\mathbb{R}^k$ be linear.
We denote by $A^*$ the adjoint of $A$
and write $A(v)=A v$.
We consider the following norms
\begin{align*}
|A|^2&:=\mathrm{trace}(A^*A)
\;\;\;
\|A\|:=\sup_{v\in \mathbb{R}^n\setminus\{0\}}|A(v)||v|^{-1}.
\end{align*}

Let $T$ be an $n$-dimensional subspace of $\mathbb{R}^{n+k}$.
Set $T^{\bot}=\{x\in\mathbb{R}^{n+k}:\;x\cdot v=0\;\forall v\in T\}$.
By $T_{\natural}:\mathbb{R}^{n+k}\to T$ we denote the projection onto $T$,
i.e. if $T=\mathrm{span}(\tau_i)_{1\leq i\leq n}$, $T^{\bot}=\mathrm{span}(\nu_j)_{1\leq j\leq k}$
then $T_{\natural}$ is given by $T_{\natural}\tau_i=\tau_i$ and $T_{\natural}\nu_j=0$ for all $1\leq i\leq n, 1\leq j\leq k$.

For $R\in (0,\infty)$ and $b\in\mathbb{R}^n$ we set
\begin{align*}
\mathbf{B}^n(b,R):=\left\{x\in\mathbb{R}^{n}:|x-b|<R\right\},
\;\;\;\mathbf{B}^n(\hat{a},0):=\emptyset,
\;\;\;\mathbf{B}^n(\hat{a},\infty):=\mathbb{R}^{n},
\end{align*}
For $r,h\in [0,\infty]$ and 
$a=(\hat{a},\tilde{a})\in\mathbb{R}^{\mathbf{n}}\times\mathbb{R}^{\mathbf{k}}$ 
set
\begin{align*}
\mathbf{C}(a,r,h):=\mathbf{B}^{\mathbf{n}}(\hat{a},r)\times\mathbf{B}^{\mathbf{k}}(\tilde{a},h),
\;\;\;
\mathbf{B}(a,r):=\mathbf{B}^{\mathbf{n}+\mathbf{k}}(a,r).
\end{align*}

Consider an open subset $\Omega\subset\mathbb{R}^{n}$ 
and $t_1,t_2\in\mathbb{R}$, $t_1<t_2$.
For a function $\phi:(t_1,t_2)\times\Omega\to\mathbb{R}$
we denote by $\partial_t\phi$ the partial derivative in time (in $(t_1,t_2)$),
by $D_i\phi$ the partial derivatives in space $1\leq i\leq n$
and set $D\phi:=(D_1\phi,\ldots,D_n\phi)$.
%
%
%

%
%
%
%
\paragraph{Submanifolds of $\mathbb{R}^{\mathbf{n}+\mathbf{k}}$}
Let $M$ denote an ($\mathbf{n}$-dimensional, $\mathcal{C}^2$-regular) submanifold of $\mathbb{R}^{\mathbf{n}+\mathbf{k}}$.
By this we mean that for each $y\in M$ there are open sets $U,V\in\mathbb{R}^{\mathbf{n}+\mathbf{k}}$
with $y\in U$, $0\in V$ and a diffeomorphism $\psi\in\mathcal{C}^{2}(U,V)$ such that $\psi(0)=y$ and
\begin{align*}
\psi(\Omega)=M\cap U,
\;\;\;
\Omega:=V\cap\mathbb{R}^{\mathbf{n}}\times\{0\}^{\mathbf{k}}.
\end{align*}

For $\hat{p}\in\Omega$ and $x=\psi(\hat{p})$ we define
\begin{align*}
\tau_i(\hat{p})&:=D_i\psi(\hat{p})
\end{align*}
for $1\leq i\leq\mathbf{n}$.
The $\tau_i$ are linearly independent vectors spaning the tangent space
$\mathbf{T}(M,x)=\mathbf{T}_xM:=\mathrm{span}(\tau_i(\hat{p}))_{1\leq i\leq \mathbf{n}}$.
The first fundamental form $g(\hat{p})\in \mathbb{R}^{\mathbf{n}\times\mathbf{n}}$ is defined by
\begin{align*}
g_{ij}(\hat{p})&:=\tau_i(\hat{p})\cdot\tau_j(\hat{p}).
\end{align*}
The inverse $\left(g(\hat{p})\right)^{-1}$ always exists 
and its components will be denoted by $g^{ij}(\hat{p})$.
%
%
For $\phi\in\mathcal{C}^1(U,\mathbb{R})$ and $X\in\mathcal{C}^1(U,\mathbb{R}^{\mathbf{n}+\mathbf{k}})$  we define
\begin{align*}
\nabla_i^{M}\phi(x)
&:=\sum_{j=1}^{\mathbf{n}}g^{ij}D_i(\phi\circ\psi)\Big|_{\hat{p}}\cdot\tau_j(\hat{p}),
\;\;\;
\nabla^{M}\phi(x)
:=\sum_{i=1}^{\mathbf{n}}\nabla_i^{M}\phi(x)\tau_i(\hat{p})
\\
\mathrm{div}_M X(x)&:=\sum_{i=1}^{\mathbf{n}+\mathbf{k}}\left(\nabla^{M}(\mathbf{e}_i\cdot X(x))\right)\cdot\mathbf{e}_i
=\sum_{i=1}^{\mathbf{n}+\mathbf{k}}\left(DX(x)\tau_i(\hat{p})\right)\cdot\tau_i(\hat{p})
\end{align*}
We define the second fundamental form 
$A(\hat{p})\in\mathbb{R}^{\mathbf{n}\times\mathbf{n}\times(\mathbf{n}+\mathbf{k})}$ by
\begin{align*}
A_{ij}(\hat{p})
&:=\left(\mathbf{T}(M,x)^{\bot}\right)_{\natural}D_iD_j\psi(\hat{p})\in\mathbb{R}^{\mathbf{n}+\mathbf{k}}.
\end{align*}
Moreover we define the mean curvature vector
\begin{align*}
\mathbf{H}(M,x):=\sum_{i,j=1}^{\mathbf{n}}g^{ij}(\hat{p})A_{ij}(\hat{p})
\end{align*}
and the norm
\begin{align*}
\left|\mathbf{A}(M,x)\right|^2
:=\sum_{i,j,\imath,\jmath=1}^{\mathbf{n}}
g^{i\imath}(\hat{p})g^{j \jmath}(\hat{p})A_{ij}(\hat{p})\cdot A_{\imath\jmath}(\hat{p}).
\end{align*}
Note that these two quantities are both independent of the choice of $\psi$.

$M$ induces a Borel measure on $\mathbb{R}^{\mathbf{n}+\mathbf{k}}$ via
$\mu_M(A):=\mathscr{H}^{\mathbf{n}}\left(M\cap A\right)$
for all $A\subset\mathbb{R}^{\mathbf{n}+\mathbf{k}}$.
For $I\subset J\subset\mathbb{R}$, $A\subset\mathbb{R}^{\mathbf{n}+\mathbf{k}}$,
a family $(M_t)_{t\in I}$ of submanifolds 
and a function $\phi:J\times A\to\mathbb{R}^N$
we set $\int_{M_t\cap A}\phi :=\int_{A}\phi(t,x)d\mu_{M_t}(x)$,
supposed this expression exists.
We will often just write $\mu$ or $\mu_t$ instead of $\mu_M$ or $\mu_{M_t}$.

%
%
\paragraph{Graphical submanifolds}
Let $M$ be a submanifold of $\mathbb{R}^{\mathbf{n}+\mathbf{k}}$.
Consider the special case
\begin{align*}
M\cap\mathbf{C}(y,R,\Gamma)=\mathrm{graph}(f)
\end{align*}
for $y=(\hat{y},\tilde{y})\in M$, $R,\Gamma\in (0,\infty)$
and some $f\in\mathcal{C}^{2}\left(\mathbf{B}^{\mathbf{n}}(\hat{y},R),\mathbb{R}^{\mathbf{k}}\right)$.
The first derivative of $f$ is bounded by the tilt of the tangent space
$\mathrm{tilt}(M,y):=\left\|(\mathbb{R}^{\mathbf{n}}\times\{0\}^{\mathbf{k}})_{\natural}-\mathbf{T}(M,y)_{\natural}\right\|^2$
in the following way (see \cite[8.9(5)]{allard})
\begin{align}
\label{tiltderivativerema}
\mathrm{tilt}(M,y)
\leq \|Df(\hat{y})\|^2
\leq \frac{\mathrm{tilt}(M,y)}{1-\mathrm{tilt}(M,y)}.
\end{align}
The second derivative of $f$ is related to the curvature $\mathbf{A}$ in the following way
\begin{align}
\label{secderivativerema}
c\frac{|D^2f(\hat{y})|^2}{\left(1+|Df(\hat{y})|^2\right)^{3}}
\leq \left|\mathbf{A}(M,y)\right|^2
\leq C|D^2f(\hat{y})|^2.
\end{align}
Consider $F\in\mathcal{C}^{2}\left(\mathbf{B}^{\mathbf{n}}(\hat{y},R),\mathbb{R}^{\mathbf{n}+\mathbf{k}}\right)$
given by $F(\hat{x}):=(\hat{x},f(\hat{x}))$.
We denote by $JF(\hat{x}):=\sqrt{DF(\hat{x})^TDF(\hat{x})}$ the Jacobian of $F$ at $\hat{x}$.
For a function $\phi:\mathbf{C}(y,R,\Gamma)\to\mathbb{R}^N$
we have
\begin{align*}
\int_{M\cap\mathbf{C}(y,R,\Gamma)}\phi 
=\int_{F^{-1}[M\cap\mathbf{C}(y,R,\Gamma)]}\phi(F(\hat{x}))JF(\hat{x})d\mathscr{L}^{\mathbf{n}}(\hat{x}),
\end{align*}
supposed this expression exists.

In the special case $\mathbf{k}=1$ the following holds:
The normal space $\mathbf{T}(M,y)^{\bot}$ is spanned by the normal vector
\begin{align}
\label{hypergraphnormal}
\nu(y)=\frac{(-Df(\hat{y}),1)}{\sqrt{1+|Df(\hat{y})|^2}}.
\end{align}
The tilt is given by
\begin{align}
\label{hypertilt}
\left\|(\mathbb{R}^{\mathbf{n}}\times\{0\})_{\natural}-\mathbf{T}(M,y)_{\natural}\right\|^2
=1-(\nu(y)\cdot\mathbf{e}_{\mathbf{n}+1})^2
=\frac{|Df(\hat{y}|^2}{1+|Df(\hat{y})|^2}.
\end{align}

\paragraph{Comments on the definition of mean curvature flow}
By the smoothness of $\Psi:[t_1,t_2)\times M\to\mathbb{R}^{\mathbf{n}+\mathbf{k}}$,
we mean that there exist $t_0\in (-\infty,t_1)$, $\Omega\subset\mathbb{R}^{\mathbf{n}+\mathbf{k}}$ open
and a $\bar{\Psi}\in\mathcal{C}^{\infty}((t_0,t_2)\times\Omega,\mathbb{R}^{\mathbf{n}+\mathbf{k}})$
such that $M\subset\Omega$ and $\bar{\Psi}|_{[t_1,t_2)\times M}\equiv \Psi$.
In particular all $M_t$ are smooth submanifolds of $\mathbb{R}^{\mathbf{n}+\mathbf{k}}$
and all graphical representations are smooth as well.

Actually we only use $\Psi$ is smooth on $(t_1,t_2)$,
$\Psi_{t_1}$ is locally Lipschitz (sections \ref{localregularity} and \ref{smallgradient}) 
or $\mathcal{C}^{2}$ (sections \ref{smallcurve}, \ref{stayinggraphical} and \ref{becomegraph})
and $\int_{M_{t_1}}\phi=\lim_{t\searrow t_1}\int_{M_t}\phi$ for all $\phi\in\mathcal{C}_c^{2}(\mathbb{R}^{\mathbf{n}+\mathbf{k}})$.

\section{Arbitrary co-dimension}
\label{higher_codim}
%
%
%
%
%
%
%
%
%
\subsection{Local regularity}
\label{localregularity}
%
%
%
%
The following integrated version of the mean curvature flow equation \eqref{meancurvflowa} is from Brakke \cite[3.5]{brakke}.
For smooth mean curvature flow  this follows from the evolution equations by Huisken \cite[3.6]{huisken1} (see also Ecker \cite[4.6]{eckerb}).
%
%
\begin{prop}[{\cite[3.5]{brakke}}]
\label{timedepfctprop}
Consider an open, bounded subset $U\subset\mathbb{R}^{\mathbf{n}+\mathbf{k}}$
and let $t_1\in\mathbb{R}$, $t_2\in (t_1,\infty]$.
Let $(M_t)_{t\in [t_1,t_2)}$ be a \mcf\; in $U$
and $\phi\in\mathcal{C}^2\left((t_1,t_2)\times\mathbb{R}^{\mathbf{n}+\mathbf{k}},\mathbb{R}\right)$
with $\cup_{t\in (t_1,t_2)}M_t\cap\{\phi_t\neq 0\}\subset U$.
Then for all $s\in (t_1,t_2)$
\begin{align}
\label{intmeancurvflowa}
\begin{split}
\frac{d}{dt}\bigg|_{t=s}\int_{M_t}\phi_t
&=
\int_{M_t}\left(
\partial_t\Big|_{t=s}\phi_t+\mathrm{div}_{M_s}(D\phi_s)-|\mathbf{H}_s|^2\phi_s
\right),
\end{split}
\end{align}
where $\phi_t(x)=\phi(t,x)$.
\end{prop}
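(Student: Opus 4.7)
My plan is to derive the formula by pulling the integral back to a fixed parameter domain via the smooth parametrisation $\Psi_t$, differentiating under the integral sign, and simplifying using the mean curvature flow equation together with the evolution of the Jacobian.

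First, I would localise using a partition of unity subordinate to coordinate charts of the underlying manifold $M$, reducing matters to the case where the spatial support of $\phi_t$ lies inside a single chart $\psi:\Omega\to M$ with $\Omega\subset\mathbb{R}^{\mathbf{n}}$ open. Setting $\tilde{\Psi}(t,\hat{p}):=\Psi(t,\psi(\hat{p}))$, $g_{ij}(t,\hat{p}):=D_i\tilde{\Psi}\cdot D_j\tilde{\Psi}$ and $J\tilde{\Psi}_t:=\sqrt{\det g(t,\cdot)}$, the area formula rewrites $\int_{M_t}\phi_t\,d\mu_t$ as $\int_\Omega\phi(t,\tilde{\Psi}(t,\hat{p}))\,J\tilde{\Psi}_t(\hat{p})\,d\hat{p}$. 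The smoothness of $\Psi$ on $(t_1,t_2)\times M$ and of $\phi$ on $(t_1,t_2)\times\mathbb{R}^{\mathbf{n}+\mathbf{k}}$ justifies differentiating under the integral.

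Next, I would split the time derivative of the integrand by the product rule. The chain rule and the flow equation \eqref{meancurvflowa} give $\partial_t[\phi(t,\tilde{\Psi}(t,\hat{p}))] = \partial_t\phi + D\phi\cdot\mathbf{H}$. The classical identity $\partial_t\log J\tilde{\Psi}_t=\tfrac{1}{2}g^{ij}\partial_t g_{ij}$, combined with $\partial_t g_{ij}=D_i(\partial_t\tilde{\Psi})\cdot D_j\tilde{\Psi}+D_i\tilde{\Psi}\cdot D_j(\partial_t\tilde{\Psi})$ and $\partial_t\tilde{\Psi}=\mathbf{H}$, identifies $\partial_t J\tilde{\Psi}_t/J\tilde{\Psi}_t$ with $\mathrm{div}_{M_t}(\mathbf{H})$. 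Since $\mathbf{H}$ is normal to $M_t$, differentiating $\mathbf{H}\cdot\tau_i=0$ along $\tau_i$ and summing produces $\mathrm{div}_{M_t}(\mathbf{H})=-|\mathbf{H}|^2$. Undoing the change of variables then collects everything into $\int_{M_s}\bigl(\partial_t\phi + \mathbf{H}\cdot D\phi - \phi|\mathbf{H}|^2\bigr)\,d\mu_s$.

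Finally, to match the form stated in \eqref{intmeancurvflowa}, I would invoke the divergence theorem on $M_s$ in the guise of the first variation identity $\int_{M_s}\mathrm{div}_{M_s}(X) = -\int_{M_s}\mathbf{H}\cdot X$, valid for any $\mathcal{C}^1$ vector field $X$ whose support meets $M_s$ only inside $U$; the latter is guaranteed here because $\phi_t$ vanishes outside $U$ and $\partial M_t\cap V=\emptyset$, so no boundary contributions appear. Applied to $X=D\phi_s$, this converts the $\mathbf{H}\cdot D\phi$ term into a $\mathrm{div}_{M_s}(D\phi_s)$ term of the desired shape. The computation is otherwise entirely routine; the only care needed is bookkeeping of signs in the first variation formula and in the identity $\mathrm{div}_M\mathbf{H}=-|\mathbf{H}|^2$, which both rely on the normality of $\mathbf{H}$.
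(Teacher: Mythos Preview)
The paper does not supply its own proof of this proposition; it simply cites Brakke \cite[3.5]{brakke} and remarks that in the smooth case the identity follows from Huisken's evolution equations \cite[3.6]{huisken1} (see also Ecker \cite[4.6]{eckerb}). Your derivation---pull back via $\Psi_t$, differentiate under the integral, use $\partial_t\tilde\Psi=\mathbf{H}$ and $\partial_t\log J\tilde\Psi_t=\mathrm{div}_{M_t}\mathbf{H}=-|\mathbf{H}|^2$, then rewrite $\mathbf{H}\cdot D\phi$ via the first variation identity---is exactly the standard argument behind those references, so in that sense your route and the paper's (cited) route coincide.

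One point is worth flagging. If you track the signs in your last step honestly, the first variation identity $\int_{M_s}\mathrm{div}_{M_s}X=-\int_{M_s}\mathbf{H}\cdot X$ applied to $X=D\phi_s$ turns $\int_{M_s}\mathbf{H}\cdot D\phi_s$ into $-\int_{M_s}\mathrm{div}_{M_s}(D\phi_s)$, so the final formula you obtain is
\[
\frac{d}{dt}\bigg|_{t=s}\int_{M_t}\phi_t
=\int_{M_s}\Bigl(\partial_t\phi_s-\mathrm{div}_{M_s}(D\phi_s)-|\mathbf{H}_s|^2\phi_s\Bigr),
\]
with a \emph{minus} in front of the divergence term, not the plus displayed in \eqref{intmeancurvflowa}. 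A quick check on the shrinking sphere with $\phi=|x|^2$ confirms that the minus sign is the correct one. This is consistent with how the paper actually uses the proposition: in the proof of Proposition~\ref{localisingprop} one verifies $(\partial_t-\mathrm{div}_{M_t}D)\Upsilon^3\le 0$ and then invokes Proposition~\ref{timedepfctprop} to conclude $\tfrac{d}{dt}\int\Upsilon^3\le 0$, which only follows with the minus sign. So the plus in \eqref{intmeancurvflowa} is a typographical slip, and your cautious remark about sign bookkeeping is well placed---your argument is correct, and it proves the formula with the corrected sign.
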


%
%
%
%
The above Proposition shows that if $\left(\partial_t-\mathrm{div}_{M_t}D\right)\phi\leq 0$,
the $M_t$ integral of $\phi$ is monotonically non-increasing.
If a test function $\eta$ satisfies $\left(\partial_t-\mathrm{div}_{M_t}D\right)\eta\leq 0$,
but does not have bounded support, 
we can use the following Proposition to localize $\eta$.
See also \cite[3.6]{brakke}, \cite[3.17]{eckerb} and \cite{eckerh2}.
\begin{prop}
\label{localisingprop}
Let $\varrho\in (0,\infty)$, $L\in [0,\infty)$, $t_1\in\mathbb{R}$, $t_2\in (t_1,\infty]$, $y_0\in\mathbb{R}^{\mathbf{n}+\mathbf{k}}$
and let $(M_t)_{t\in [t_1,t_2)}$ be a \mcf\; in $\mathbf{B}(y_0,\varrho)$.
Consider a function $\eta\in\mathcal{C}^{0,1}\left((t_1,t_2)\times\mathbf{B}(y_0,\varrho),[0,1]\right)
\cap\mathcal{C}^2\left(\{\eta>0\}\right)$
which satisfies
\begin{align}
\label{localisingpropa}
\left(\partial_t-\mathrm{div}_{M_t}D\right)\eta(t,x)
\leq 0,
\;\;\;
|\nabla^{M_t}\eta(t,x)|\leq L
\end{align}
for all $(t,x)\in (t_1,t_2)\times\mathbf{B}(y_0,\varrho)\cap\{\eta>0\}$ 
with $x\in M_t$.
Define $\Upsilon\in\mathcal{C}^{0,1}\left((t_1,t_2)\times\mathbf{B}(y_0,\varrho),[0,1]\right)$ by
\begin{align*}
\Upsilon(t,x):=\left(\left(\varrho^2-|x-y_0|^2\right)\eta(t,x)-(2\mathbf{n}+L\varrho)(t-t_1)\right)_+.
\end{align*}
Then for all $t\in (t_1,t_2)$
\begin{align}
\label{localisingpropc}
\frac{d}{dt}
\int_{\mathbb{R}^{\mathbf{n}+\mathbf{k}}}\Upsilon^3(t,x)d\mu_t(x)
\leq 0.
\end{align}
\end{prop}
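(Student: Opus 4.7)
The plan is to apply Proposition~\ref{timedepfctprop} with the test function $\phi_t(x):=\Upsilon(t,x)^3$ and show that the resulting pointwise integrand is nonpositive. First I would deal with regularity: $\Upsilon$ is globally Lipschitz, and on the open set $\{\Upsilon>0\}\subset\{\eta>0\}$ the function $\Upsilon$ inherits the $\mathcal{C}^2$-regularity of $\eta$, while $\Upsilon^3\in\mathcal{C}^{1}$ globally (with $D\Upsilon^3=3\Upsilon^2 D\Upsilon$ vanishing on the boundary $\{\Upsilon=0\}$). A routine approximation (e.g.\ by $((\Upsilon-\delta)_+)^3$ followed by a mollification and the limit $\delta\downarrow 0$) brings us within the $\mathcal{C}^2$-hypothesis of Proposition~\ref{timedepfctprop}, so it suffices to verify pointwise non-positivity of the integrand on $\{\Upsilon>0\}$, where the integrand off this set is identically zero.

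On $\{\Upsilon>0\}$ I would write $\Upsilon=\phi\eta-\psi$ with $\phi(x):=\varrho^2-|x-y_0|^2$ and $\psi(t):=(2\mathbf{n}+L\varrho)(t-t_1)$. The key computations are $D\phi=-2(x-y_0)$, so $\mathrm{div}_{M_t}D\phi=\mathrm{tr}(D^2\phi|_{\mathbf{T}(M_t,x)})=-2\mathbf{n}$ and $|\nabla^{M_t}\phi|\leq 2\varrho$, together with $\partial_t\psi=2\mathbf{n}+L\varrho$. Applying the chain rule for $\Upsilon^3$ yields
\[
(\partial_t-\mathrm{div}_{M_t}D)\Upsilon^3-|\mathbf{H}|^2\Upsilon^3=3\Upsilon^2(\partial_t-\mathrm{div}_{M_t}D)\Upsilon-6\Upsilon|\nabla^{M_t}\Upsilon|^2-|\mathbf{H}|^2\Upsilon^3,
\]
and since the last two summands are nonpositive it would be enough to show
\[
\Upsilon\,(\partial_t-\mathrm{div}_{M_t}D)\Upsilon\leq 2|\nabla^{M_t}\Upsilon|^2\quad\text{on }\{\Upsilon>0\}.
\]
Leibniz applied to $\phi\eta$ (noting the $2\nabla^{M_t}\phi\cdot\nabla^{M_t}\eta$ symmetric cross term in $\mathrm{div}_{M_t}D(\phi\eta)$) gives
\[
(\partial_t-\mathrm{div}_{M_t}D)\Upsilon=\phi\,(\partial_t-\mathrm{div}_{M_t}D)\eta+2\mathbf{n}(\eta-1)-L\varrho-2\nabla^{M_t}\phi\cdot\nabla^{M_t}\eta,
\]
and the first two terms are $\leq 0$ by the hypotheses $(\partial_t-\mathrm{div}_{M_t}D)\eta\leq 0$ and $\eta\leq 1$.

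The main obstacle is the cross term $-2\nabla^{M_t}\phi\cdot\nabla^{M_t}\eta$: the crude bound $|{-}2\nabla^{M_t}\phi\cdot\nabla^{M_t}\eta|\leq 4\varrho L$ overwhelms the $-L\varrho$ budget, so raw Cauchy--Schwarz on this term does not close the estimate. The resolution is to exploit the identity $\nabla^{M_t}\Upsilon=\eta\nabla^{M_t}\phi+\phi\nabla^{M_t}\eta$, which expands as $|\nabla^{M_t}\Upsilon|^2=\eta^2|\nabla^{M_t}\phi|^2-4\eta\phi\,\mathrm{proj}_{\mathbf{T}(M_t,x)}(x-y_0)\cdot\nabla^{M_t}\eta+\phi^2|\nabla^{M_t}\eta|^2$. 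Solving for the cross term and then multiplying the bound for $(\partial_t-\mathrm{div}_{M_t}D)\Upsilon$ by $\Upsilon$, Young's inequality together with $\Upsilon\leq\phi\eta$ allows the leftover pieces to be absorbed into the quadratic $2|\nabla^{M_t}\Upsilon|^2$. The coefficients $2\mathbf{n}$ and $L\varrho$ in the definition of $\psi$ are precisely calibrated for this absorption to succeed, and the $-|\mathbf{H}|^2\Upsilon^3$ term is simply discarded as nonpositive; this is the standard Brakke-style cutoff computation, compare \cite[3.6]{brakke} and \cite{eckerh2}.
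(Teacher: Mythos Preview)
Your strategy---apply Proposition~\ref{timedepfctprop} to $\Upsilon^3$ and verify pointwise nonpositivity of the integrand on $\{\Upsilon>0\}$---is exactly the paper's approach. The paper's proof is a three-line computation claiming
\[
(\partial_t-\mathrm{div}_{M_t}D)\Upsilon^3
\leq 3\Upsilon^2\,\phi\,(\partial_t-\mathrm{div}_{M_t}D)\eta\leq 0,
\]
which amounts to discarding $-6\Upsilon|\nabla^{M_t}\Upsilon|^2\le 0$ and asserting $(\partial_t-\mathrm{div}_{M_t}D)\Upsilon\le \phi(\partial_t-\mathrm{div}_{M_t}D)\eta$, i.e.\ $2\mathbf{n}(\eta-1)-L\varrho-2\nabla^{M_t}\phi\cdot\nabla^{M_t}\eta\le 0$. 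You have correctly spotted that this last step is not justified by crude Cauchy--Schwarz, since $|{-}2\nabla^{M_t}\phi\cdot\nabla^{M_t}\eta|$ may reach $4L\varrho$ while only $L\varrho$ is subtracted.

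However, your proposed fix---absorbing the cross term into $2|\nabla^{M_t}\Upsilon|^2$---does not close the estimate either. Write $u=\nabla^{M_t}\phi$, $v=\nabla^{M_t}\eta$; you would need
\[
\Upsilon\bigl(-L\varrho-2u\cdot v\bigr)\le 2|\eta u+\phi v|^2.
\]
Take the configuration $\eta u=-\phi v$ (so $\nabla^{M_t}\Upsilon=0$ and the right side vanishes). Then $u\cdot v=-\tfrac{\eta}{\phi}|u|^2$ and the left side equals $\Upsilon\bigl(-L\varrho+\tfrac{2\eta}{\phi}|u|^2\bigr)$. With e.g.\ $\varrho=L=1$, $|x|=\tfrac12$ and $x$ tangential (so $|u|=1$, $\phi=\tfrac34$), $\eta=\tfrac12$, $t=t_1$, one has $|v|=\tfrac23\le L$, $\Upsilon=\eta\phi=\tfrac38>0$, and the left side equals $\tfrac38\cdot\tfrac13>0$ while the right side is $0$. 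So the absorption fails, and your claim that the coefficients are ``precisely calibrated'' is not correct.

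The genuine resolution is simply that the constant $L\varrho$ in the definition of $\Upsilon$ should be $4L\varrho$ (this is what both applications in the paper, Corollary~\ref{heightboundcor} and the proof of Theorem~\ref{becominggraphthm}, actually use, since they track only generic constants). With $(2\mathbf{n}+4L\varrho)$ in place of $(2\mathbf{n}+L\varrho)$, the crude bound $-2u\cdot v\le 4L\varrho$ already gives $(\partial_t-\mathrm{div}_{M_t}D)\Upsilon\le \phi(\partial_t-\mathrm{div}_{M_t}D)\eta\le 0$, the $|\nabla^{M_t}\Upsilon|^2$ term can be discarded, and your (and the paper's) argument goes through directly.
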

%
%
\begin{proof}
We may assume $y_0=0$ and $t_1=0$.
Set $\mathscr{M}:=\{(t,x)\in (t_1,t_2)\times\mathbb{R}^{\mathbf{n}+\mathbf{k}}:x\in M_t\}$.
Note that $\Upsilon^3\in\mathcal{C}^2\left((t_1,t_2)\times\mathbf{B}(y_0,\varrho),[0,1]\right)$
and $\{\Upsilon>0\}\subset\{\eta>0\}\cap\mathbf{B}(0,\varrho)$.
Using $\mathrm{div}_{M_t}(x)=\mathbf{n}$, $\eta\leq 1$ and \eqref{localisingpropa}
we can estimate
\begin{align*}
\left(\partial_t-\mathrm{div}_{M_t}D\right)\Upsilon^3(t,x)
\leq 3\Upsilon^2\left(\varrho^2-|x|^2\right)\left(\partial_t-\mathrm{div}_{M_t}D\right)\eta(t,x)\leq 0
\end{align*}
on $\mathscr{M}\cap\{\Upsilon>0\}$.
Continuity of the derivatives of $\Upsilon^3$ implies 
that this estimate holds on all of $\mathscr{M}$.
Also we know $\cup_{t\in (t_1,t_2)}\{\Upsilon(t,\cdot)>0\}\subset\mathbf{B}(0,\varrho)$.
Using Proposition \ref{timedepfctprop} then establishes the result.
\end{proof}

%
%
%
%
%
In the very important special case $\eta\equiv 1$ and $L=0$
the $\Upsilon$ from above proposition will be called $\varphi$.
This test function already appears in \cite[ch.3]{brakke}.
\begin{defn}
\label{barrierfctdef}
Let $\rho\in (0,\infty)$, $x_0\in\mathbb{R}^{\mathbf{n}+\mathbf{k}}$, $t_0\in\mathbb{R}$ be fixed.
Define $\varphi_{\rho}\in\mathcal{C}^{0,1}\left(\mathbb{R}\times\mathbb{R}^{\mathbf{n}+\mathbf{k}}\right)$ by
\begin{align*}
\varphi_{\rho}(t,x)&:=\left(1-\rho^{-2}\left(|x|^2 + 2\mathbf{n}t\right)\right)_+,
\;\;\;
\varphi_{(t_0,x_0),\rho}(t,x):=\varphi_{\rho}(t-t_0,x-x_0).
\end{align*}
\end{defn}

%
%
%
%
Applying Proposition \ref{localisingprop}
directly yields a monotonicity formula for $\varphi_{\rho}$,
which implies an a-priori measure bound for balls.
\begin{cor}[{\cite[3.7]{brakke}},{\cite[4.9]{eckerb}}]
\label{barrierlem}
Let $\varrho\in (0,\infty)$, $s_1\in\mathbb{R}$, $s_2\in (s_1,\infty)$, $y_0\in\mathbb{R}^{\mathbf{n}+\mathbf{k}}$
and let $(M_t)_{t\in [s_1,s_2)}$ be a \mcf\; in $\mathbf{B}(y_0,\varrho)$.
Then
\begin{align*}
\frac{d}{dt}
\int_{\mathbb{R}^{\mathbf{n}+\mathbf{k}}}\varphi_{(s_1,y_0),\varrho}^3(t,x)d\mu_{t}(x)
\leq 0
\end{align*}
for all $t\in (s_1,s_2)$.
Also we have
\begin{align*}
\mathscr{H}^{\mathbf{n}}\left(M_t\cap\mathbf{B}(y_0,2^{-1}\varrho)\right)
\leq 8\mathscr{H}^{\mathbf{n}}\left(M_{s_1}\cap\mathbf{B}(y_0,\varrho)\right)
\end{align*}
for all $t\in [s_1,s_1+(8n)^{-1} \varrho^2)\cap[s_1,s_2)$.
\end{cor}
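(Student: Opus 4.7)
The plan is to obtain both assertions by applying Proposition \ref{localisingprop} with the trivial choice $\eta \equiv 1$, $L = 0$, and $t_1 = s_1$. With these choices the function $\Upsilon$ constructed in the proposition becomes
\begin{align*}
\Upsilon(t,x) = \left(\varrho^2 - |x-y_0|^2 - 2\mathbf{n}(t-s_1)\right)_+ = \varrho^2\,\varphi_{(s_1,y_0),\varrho}(t,x),
\end{align*}
by direct comparison with Definition \ref{barrierfctdef}. Hence the monotonicity $\frac{d}{dt}\int \Upsilon^3\,d\mu_t \leq 0$ supplied by the proposition is, up to the positive constant $\varrho^6$, exactly the first claim.

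For the measure bound I would argue that $\varphi_{(s_1,y_0),\varrho}(t,\cdot)$ is bounded below by a fixed constant on the smaller ball during the stated time interval. Concretely, for $x \in \mathbf{B}(y_0, 2^{-1}\varrho)$ and $t \in [s_1, s_1 + (8\mathbf{n})^{-1}\varrho^2)$ one has $|x-y_0|^2\varrho^{-2} < 1/4$ and $2\mathbf{n}(t-s_1)\varrho^{-2} < 1/4$, so $\varphi_{(s_1,y_0),\varrho}(t,x) > 1/2$ and therefore $\varphi_{(s_1,y_0),\varrho}^3(t,x) > 1/8$. This yields
\begin{align*}
\mathscr{H}^{\mathbf{n}}\left(M_t \cap \mathbf{B}(y_0, 2^{-1}\varrho)\right) \leq 8 \int_{M_t} \varphi_{(s_1,y_0),\varrho}^3\,d\mu_t.
\end{align*}
I would then combine the monotonicity just established with the pointwise bounds $0 \leq \varphi_{(s_1,y_0),\varrho}(s_1,\cdot) \leq 1$ and $\mathrm{supp}\,\varphi_{(s_1,y_0),\varrho}(s_1,\cdot) \subset \overline{\mathbf{B}(y_0,\varrho)}$ (both immediate from Definition \ref{barrierfctdef} at $t=s_1$) to estimate the right-hand side by $8\,\mathscr{H}^{\mathbf{n}}(M_{s_1} \cap \mathbf{B}(y_0,\varrho))$.

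There is no genuinely difficult step; the only subtlety is the passage to the initial time value, since Proposition \ref{localisingprop} only asserts the monotonicity on the open interval $(s_1,s_2)$. This is handled by the integral continuity hypothesis recorded at the end of the Comments on the definition of mean curvature flow, which guarantees $\lim_{t\searrow s_1}\int_{M_t}\varphi_{(s_1,y_0),\varrho}^3\,d\mu_t = \int_{M_{s_1}}\varphi_{(s_1,y_0),\varrho}^3\,d\mu_{s_1}$. Otherwise the result is a direct packaging of Proposition \ref{localisingprop} in its simplest instance.
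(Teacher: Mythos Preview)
Your proposal is correct and follows exactly the approach indicated by the paper, which states only that the corollary follows by ``applying Proposition \ref{localisingprop} directly'' with $\eta\equiv 1$ and $L=0$. You have supplied the straightforward details the paper omits, including the identification $\Upsilon=\varrho^2\varphi_{(s_1,y_0),\varrho}$ and the lower bound $\varphi^3>1/8$ on the half-radius ball during the stated time interval; the handling of the limit $t\searrow s_1$ is also appropriate.
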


%
%
%
%
Proposition \ref{localisingprop} also implies that 
a local height bound is maintained.
Though the bound is increasing linear in time.
%
%
\begin{cor}
\label{heightboundcor}
There exists a $C\in (1,\infty)$ such that the following holds:
Let $R,r_0\in (0,\infty)$, $t_1\in\mathbb{R}$, $t_2\in (t_1,\infty)$,
$x_0\in\mathbb{R}^{\mathbf{n}+\mathbf{k}}$
and let $(M_t)_{t\in [t_1,t_2)}$ be a \mcf\; in $\mathbf{B}(x_0,2R)$.
Suppose
\begin{align*}
M_{t_1}\cap\mathbf{B}(x_0,2R)
\subset\mathbf{C}(x_0,2R,r_0).
\end{align*}
Then for all $t\in [t_1,t_2)$ and $r(t):=r_0+C(t-t_1)R^{-1}$ we have
\begin{align*}
M_{t}\cap\mathbf{C}(x_0,R,R)
\subset\mathbf{C}(x_0,R,r(t)).
\end{align*}
\end{cor}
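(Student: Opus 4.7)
The plan is to apply Proposition \ref{localisingprop} directly with a piecewise-linear test function that measures how much a point exceeds the initial slab in one chosen vertical direction. After the translations $x_0\mapsto 0$, $t_1\mapsto 0$, I fix a unit vector $\xi\in\mathbb{R}^{\mathbf{k}}$ and set
\[
u_{\xi}(x):=\xi\cdot\tilde{x},\qquad \eta(x):=\frac{(u_{\xi}(x)-r_0)_+}{2R},
\]
where $\tilde{x}\in\mathbb{R}^{\mathbf{k}}$ denotes the vertical component of $x$. Then $\eta$ is Lipschitz on $\mathbb{R}^{\mathbf{n}+\mathbf{k}}$, affine (in particular $\mathcal{C}^2$) on the open set $\{\eta>0\}=\{u_{\xi}>r_0\}$, and bounded by $1$ on $\mathbf{B}(0,2R)$ because $|u_{\xi}(x)|\leq|\tilde{x}|\leq|x|<2R$ there.

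Since $D\eta\equiv\xi/(2R)$ is a constant vector on $\{\eta>0\}$, one has $\partial_t\eta\equiv 0$, $\mathrm{div}_{M_t}D\eta\equiv 0$ and $|\nabla^{M_t}\eta|\leq|D\eta|=1/(2R)=:L$. Hence the hypotheses of Proposition \ref{localisingprop} are met with $y_0=0$ and $\varrho=2R$, and since $2\mathbf{n}+L\varrho=2\mathbf{n}+1$ the resulting function is
\[
\Upsilon(t,x)=\bigl((4R^2-|x|^2)\,\eta(x)-(2\mathbf{n}+1)\,t\bigr)_+.
\]
The initial slab condition $M_0\cap\mathbf{B}(0,2R)\subset\mathbf{C}(0,2R,r_0)$ forces $|\tilde{x}|<r_0$, hence $u_{\xi}(x)<r_0$, hence $\eta(x)=0$, on $M_0\cap\mathbf{B}(0,2R)$. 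Therefore $\Upsilon(0,\cdot)\equiv 0$ on $M_0$, so $\int\Upsilon^3(0,\cdot)\,d\mu_0=0$. By \eqref{localisingpropc} this integral is non-increasing in $t$ and hence vanishes identically; smoothness of the flow then forces $\Upsilon(t,\cdot)\equiv 0$ pointwise on each $M_t$.

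Any $x\in M_t\cap\mathbf{C}(0,R,R)$ satisfies $|x|^2<2R^2$, so $4R^2-|x|^2>2R^2$; the vanishing of $\Upsilon(t,x)$ therefore gives
\[
(u_{\xi}(x)-r_0)_+\leq\frac{(2\mathbf{n}+1)\,t}{R}.
\]
Taking the supremum over unit $\xi\in\mathbb{R}^{\mathbf{k}}$ on the left replaces $u_{\xi}(x)$ by $|\tilde{x}|$ and yields $(|\tilde{x}|-r_0)_+\leq(2\mathbf{n}+1)t/R$. Setting $C:=2\mathbf{n}+2$, which depends only on $\mathbf{n}$ (and thus qualifies as a constant in the paper's convention), one obtains the strict bound $|\tilde{x}|<r_0+Ct/R=r(t)$ for $t>0$; the case $t=0$ is immediate from the initial inclusion together with $\mathbf{C}(0,R,R)\subset\mathbf{B}(0,2R)$.

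The argument is essentially a single application of the localising principle. The only technical point is to verify the regularity hypotheses of Proposition \ref{localisingprop} for this $\eta$: that it is genuinely $\mathcal{C}^2$ on $\{\eta>0\}$ (automatic from affinity there) and that $\eta\leq 1$ in $\mathbf{B}(0,2R)$ (arranged by the normalising factor $2R$). Once these are in hand, the result follows because the vanishing of $\Upsilon$ propagates the initial slab bound forward in time with a loss that is strictly linear in $t$, exactly as the statement asserts.
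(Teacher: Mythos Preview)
Your proof is correct and follows essentially the same route as the paper's: a direct application of Proposition~\ref{localisingprop} with a height-measuring test function, whose initial integral vanishes by the slab assumption. The only cosmetic difference is that the paper takes $\eta=\tfrac12(|\tilde{x}|-r_0)_+$ in one go (relying on the convexity of $|\tilde{x}|$ to get $\mathrm{div}_{M_t}D\eta\geq 0$), whereas you use the affine directional function $\eta_\xi=(2R)^{-1}(\xi\cdot\tilde{x}-r_0)_+$ and then take the supremum over $\xi$; your choice makes the subsolution check trivial at the cost of the extra supremum step, but the two arguments are otherwise identical.
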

%
%
\begin{proof}
We may assume $t_1=0$, $x_0=0$ and $R=1$.
Use Proposition \ref{localisingprop} with $\varrho=2$, $L=2^{-1}$ and
\begin{align*}
\eta(t,(\hat{x},\tilde{x})):=2^{-1}\left(|\tilde{x}|-r_0\right)_+,
\;\;\;
\zeta(x):=(4-|x|^2)_+.
\end{align*}
Then by \eqref{localisingpropc} we have
\begin{align*}
\int_{\mathbb{R}^{\mathbf{n}+\mathbf{k}}}\left(\zeta(x)\eta(t,(\hat{x},\tilde{x}))-Ct\right)_+d\mu_t
\leq \int_{\mathbb{R}^{\mathbf{n}+\mathbf{k}}}\left(\zeta(x)\eta(0,(\hat{x},\tilde{x}))\right)_+d\mu_0
\end{align*}
By assumption the right hand side is zero. 
For $x\in\mathbf{C}(0,1,1)$ we have $\zeta\geq 2$,
hence by definition of $\eta$ we obtain the result.
\end{proof}

%
%
%
%
%
%
%
%
%
%
%
%
The main ingredient to obtain graphical representation,
will be White's regularity theorem \cite{white}.
The version stated here follows from the proof by Ecker \cite[5.6]{eckerb}
and Huisken's monotonicity formula \cite[3.1]{huisken2}.
%
%
\begin{defn}
\label{heatkerneldef}
Let $x_0\in\mathbb{R}^{\mathbf{n}+\mathbf{k}}$, $t_0\in\mathbb{R}$ be fixed.
Define $\Phi_{(t_0,x_0)}\in\mathcal{C}^{\infty}
\left((-\infty,t_0)\times\mathbb{R}^{\mathbf{n}+\mathbf{k}}\right)$ by
\begin{align*}
\Phi_{(t_0,x_0)}(t,x)&:=\left(4\pi(t_0-t)\right)^{-\frac{\mathbf{n}}{2}}\mathrm{exp}\left(\frac{|x-x_0|^2}{4(t-t_0)}\right).
\end{align*}
\end{defn}
\begin{rem}
\label{heatkernelrem}
We have $\int_{T}\Phi_{(t_0,x_0)}(t,x)d\mathscr{H}^{\mathbf{n}}(x)=1$
for every $\mathbf{n}$-dimensional subspace $T$ of $\mathbb{R}^{\mathbf{n}+\mathbf{k}}$ and all $t\in (-\infty,t_0)$.
\end{rem}
%
%
\begin{thm}[{\cite{white}}]
\label{smoothregcor}
There exist $C\in (1,\infty)$ and $d_0\in (0,1)$ such that the following holds:
Let $r_0\in (0,\infty)$, $t_0\in\mathbb{R}$, $t_1\in (-\infty, t_0-8r_0^2]$, $t_2\in (t_0,\infty)$,
$x_0\in\mathbb{R}^{n+k}$, $R_0=C\sqrt{t_0-t_1}$
and let $(M_t)_{t\in [t_1,t_2)}$ be a \mcf\; in $\mathbf{B}(x_0,R_0)$.
Suppose $x_0\in M_{t_0}$ and
\begin{align}
\label{smoothregcora}
\int_{\mathbf{B}(x_0,R_0)}\Phi_{(s,x)}(t_1,y)d\mu_{t_1}(y)\leq 1 + d_0
\end{align}
for all $(s,x)\in [t_0-4r_0^2,t_0]\times\mathbf{B}(x_0,2r_0)$.
Then 
\begin{align}
\label{smoothregcorb}
\left|\mathbf{A}(M_t,x)\right|\leq Cr_0^{-1}
\end{align}
for all $t\in [t_0-r_0^2,t_0]$ and $x\in M_t\cap\mathbf{B}(x_0,r_0)$.
\end{thm}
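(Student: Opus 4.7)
The plan is to deduce Theorem \ref{smoothregcor} from the single-point version of White's regularity theorem proved by Ecker \cite[5.6]{eckerb}, combined with Huisken's monotonicity formula \cite[3.1]{huisken2}. Ecker's formulation delivers a pointwise curvature bound at one spacetime center $(t^\ast,x^\ast)$ under a Gaussian density bound of the form $\int \Phi_{(t^\ast,x^\ast)}(\tau,\cdot)\,d\mu_\tau\leq 1+d_0'$ at some suitably earlier time $\tau$. To produce the uniform bound on the parabolic cylinder of radius $r_0$ centered at $(t_0,x_0)$, I would apply this pointwise version at every $(t^\ast,x^\ast)$ in that cylinder.

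First I would fix $(t^\ast,x^\ast)$ with $t^\ast\in[t_0-r_0^2,t_0]$ and $x^\ast\in M_{t^\ast}\cap\mathbf{B}(x_0,r_0)$. Since $(t^\ast,x^\ast)\in[t_0-4r_0^2,t_0]\times\mathbf{B}(x_0,2r_0)$, applying the assumption \eqref{smoothregcora} at $(s,x)=(t^\ast,x^\ast)$ gives
\begin{align*}
\int_{\mathbf{B}(x_0,R_0)}\Phi_{(t^\ast,x^\ast)}(t_1,y)\,d\mu_{t_1}(y)\leq 1+d_0.
\end{align*}
Huisken's monotonicity formula, together with a cutoff supported in $\mathbf{B}(x_0,R_0)$, then propagates this bound forward in time: for any $\tau\in(t_1,t^\ast)$ one should obtain
\begin{align*}
\int \Phi_{(t^\ast,x^\ast)}(\tau,\cdot)\,d\mu_\tau\leq 1+d_0+\varepsilon(C),
\end{align*}
where $\varepsilon(C)$ is an error coming from the localization cutoff.

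The main technical point is to show that $\varepsilon(C)$ can be made arbitrarily small by choosing the constant $C$ in $R_0=C\sqrt{t_0-t_1}$ sufficiently large. This rests on two ingredients: the Gaussian decay of $\Phi_{(t^\ast,x^\ast)}(t_1,\cdot)$ away from $x^\ast$, which has scale $\sqrt{t^\ast-t_1}\leq\sqrt{t_0-t_1}=R_0/C$, and the a priori measure bound from Corollary \ref{barrierlem} applied on dyadic balls to control the $\mu_{t_1}$-mass far from $x^\ast$. Once $C$ is fixed large enough that $d_0+\varepsilon(C)\leq d_0'$, where $d_0'$ is the threshold in Ecker's theorem, an application of \cite[5.6]{eckerb} at $(t^\ast,x^\ast)$ with scale $r_0$ yields $|\mathbf{A}(M_{t^\ast},x^\ast)|\leq Cr_0^{-1}$. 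Since $(t^\ast,x^\ast)$ was an arbitrary point in the target cylinder, this establishes \eqref{smoothregcorb}. The only genuine obstacle is the careful bookkeeping of the cutoff error in the localized monotonicity, but this is standard and already contained in the arguments of \cite{eckerb}.
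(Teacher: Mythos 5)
Your strategy matches the paper's: localize Huisken's monotonicity formula, propagate the time-$t_1$ density bound forward, then invoke Ecker's local regularity theorem \cite[5.6]{eckerb}. The high-level plan is correct, but two implementation details are off in ways that matter.

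First, you describe Ecker's \cite[5.6]{eckerb} as delivering a curvature bound at $(t^\ast,x^\ast)$ under a Gaussian density bound centered at the \emph{single} point $(t^\ast,x^\ast)$. In fact Ecker's hypothesis requires the (localized) Gaussian density to be close to $1$ for \emph{all} centers $(s,x)$ in a parabolic neighborhood and for a range of earlier times $t$ — the contradiction/compactness argument breaks if you only control a single density ratio. This is exactly why the paper ranges over \emph{all} $(s,x)\in[t_0-4r_0^2,t_0]\times\mathbf{B}(x_0,2r_0)$ (which is also what assumption \eqref{smoothregcora} supplies) and over all $t\in[s-4r_0^2,s)$, and then says ``proceed as in \cite[5.6]{eckerb}'' once. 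Your per-point application with a per-point hypothesis would not match Ecker's hypotheses as stated; the fix is immediate (the assumption covers the required range), but as written the logic is misaligned with what you are invoking.

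Second, the cutoff analysis you sketch — a Gaussian-decay argument plus Corollary \ref{barrierlem} to estimate the mass far from $x^\ast$, producing an error $\varepsilon(C)\to 0$ in an \emph{unlocalized} bound $\int\Phi_{(t^\ast,x^\ast)}(\tau,\cdot)\,d\mu_\tau\leq 1+d_0+\varepsilon(C)$ — is both unnecessary and problematic. Since the flow is only assumed to exist in $\mathbf{B}(x_0,R_0)$, the unlocalized Gaussian integral need not be finite, so this intermediate quantity is not well-defined; what localized monotonicity actually controls is $\int\Phi_{(s,x)}\varphi_{(s,x),\rho}\,d\mu_t$. The paper's argument avoids any decay estimate entirely: with $\rho:=R_0/4$ and $C$ large (so that $2\mathbf{n}(s-t_1)\leq 8\mathbf{n}\rho^2/C^2$), the cutoff $\varphi_{(s,x),\rho}(t_1,\cdot)$ is \emph{compactly supported} in $\mathbf{B}(x_0,R_0)$ and bounded above by $1+d_0$, so assumption \eqref{smoothregcora} immediately gives $\int_{M_{t_1}}\Phi_{(s,x)}\varphi_{(s,x),\rho}\leq(1+d_0)^2\leq 1+4d_0$, with no error term and no need for Corollary \ref{barrierlem}. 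Monotonicity then yields the same bound at every later $t\in[t_1,s)$, which is the quantity Ecker's theorem actually needs. You should verify the support and sup bound of the cutoff at $t_1$ directly rather than appeal to Gaussian decay and the measure bound; this is where the constant $C$ in $R_0=C\sqrt{t_0-t_1}$ is used.
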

\begin{proof}
We may assume $x_0=0$ and $t_0=0$.
Consider $\rho:=\frac{R_0}{4}$.
Note that $\rho=\frac{C}{4}\sqrt{-t_1}\geq 2r_0$, for $C$ large enough.
Consider arbitrary $(s,x)\in [-4r_0^2,0]\times\mathbf{B}(0,2r_0)$
and $t\in [s-4r_0^2,s)$.
We can estimate $2n(s-t_1)\leq\frac{8\mathbf{n}\rho^2}{C^2}$.
This yields $\mathrm{spt}\varphi_{(s,x),\rho}(t_1,\cdot)\subset\mathbf{B}(0,R_0)$ 
and $\sup|\varphi_{(s,x),\rho}(t_1,\cdot)|\leq 1+d_0$,
where we chose $C$ large enough depending on $d_0$.
Hence using \eqref{smoothregcora} we can estimate
\begin{align*}
\int_{M_{t_1}}\Phi_{(s,x)}\varphi_{(s,x),\rho}
\leq
(1+d_0)\int_{M_{t_1}\cap\mathbf{B}(0,R_0)}\Phi_{(s,x)}
\leq
(1+d_0)^2
\leq 
1+4d_0.
\end{align*}

Let $t\in [s-4r_0^2,s)$, in particular $t_1\leq t$.
Hence Huisken's monotonicity formula \cite[3.1]{huisken2}
(see also Ecker's localised version \cite[4.8]{eckerb})
yields
\begin{align*}
\int_{M_t}\Phi_{(s,x)}\varphi_{(s,x),\rho}
\leq
\int_{M_{t_1}}\Phi_{(s,x)}\varphi_{(s,x),\rho}
\leq 
1+4d_0.
\end{align*}
Now we can proceed as in \cite[5.6]{eckerb}.
\end{proof}

%
%
%
%
%
%
%
%
\subsection{Stay graphical for small gradient}
\label{smallgradient}
Here we give the proof of Theorem \ref{flatstaygraphthm},
which is the local version of Theorem 1.5 from
Ilmanen, Neves and Schulze \cite[1.5]{ilnesch}.
Basically this section is a slight variation of section 9 of \cite{ilnesch}.
In particular we point out Proposition \ref{flatbecomegraphprop},
which can be easily obtained from the proof of \cite[1.5]{ilnesch}.
For the convenience of the reader we include all the details.

%
%
%
%
\begin{thm}[{\cite[1.5]{ilnesch}}]
\label{flatstaygraphthm}
There exist $C\in (1,\infty)$ and $l_0\in (0,1)$ such that the following holds:
Let $\rho,\tau\in (0,\infty)$, $\delta\in (0,1]$, $l\in [0,l_0]$, 
let $(M_t)_{t\in [0,\tau)}$ be a \mcf\; in $\mathbf{C}(a,2\rho,2\rho)$
and let $a=(\hat{a},\tilde{a})\in M_{0}$.
Suppose there exists a function $f:\mathbf{B}^{\mathbf{n}}(\hat{a},2\rho)\to \mathbb{R}^{\mathbf{k}}$ 
with $\sup|Df|\leq l$ and
\begin{align}
\label{flatstaygraphthma} 
M_{0}\cap\mathbf{C}(a,2\rho,2\rho)
=\mathrm{graph}(f).
\end{align}
Set $I:=(0,l_0\rho^2)\cap (0,\tau)$.
Then there exists a $g:I\times\mathbf{B}^{\mathbf{n}}(\hat{a},\rho)\to\mathbb{R}^{\mathbf{k}}$
with
\begin{align}
\label{flatstaygraphthmb} 
M_{t}\cap\mathbf{C}(a,\rho,\rho)
=\mathrm{graph}(g(t,\cdot))
\end{align}
and
\begin{align}
\label{flatstaygraphthmc} 
\begin{split}
\sup|g(t,\cdot)-\tilde{a}|&\leq 2l\rho+C\rho^{-1}t,
\\
\sup|Dg(t,\cdot)|&\leq C\sqrt[4]{l+\rho^{-2}t},
\\
\sup|D^2g(t,\cdot)|&\leq Ct^{-\frac{1}{2}}
\end{split}
\end{align}
for all $t\in I$.
\end{thm}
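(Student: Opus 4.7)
After translating $\hat{a}$ to $0$, $\tilde{a}$ to $0$, and parabolically rescaling by $\rho$, I may assume $a=0$ and $\rho=1$, so the goal becomes curvature and gradient control on $M_t \cap \mathbf{C}(0,1,1)$ for $t \in (0,l_0) \cap (0,\tau)$. The central input will be White's regularity theorem (Theorem \ref{smoothregcor}), whose Gaussian density hypothesis \eqref{smoothregcora} must be verified from the fact that $M_0 = \mathrm{graph}(f)$ with $\sup|Df|\le l \le l_0$. Since $JF \le \sqrt{1+l^2}$ and, by Remark \ref{heatkernelrem}, $\int_{\mathbb{R}^{\mathbf{n}}\times\{0\}^{\mathbf{k}}}\Phi_{(s,x)}(0,\cdot)\,d\mathscr{H}^{\mathbf{n}} = 1$, parametrizing the initial graph and comparing the backward heat kernel on $\mathrm{graph}(f)$ with that on the reference plane yields
\begin{align*}
\int_{\mathbf{B}(0,R_0)}\Phi_{(s,x)}(0,y)\,d\mu_0(y) \le 1 + C\,l
\end{align*}
for every $(s,x) \in [-4r_0^2,0]\times \mathbf{B}(0,2r_0)$, provided $l_0$ is small and $r_0 \le c$; the tails outside $\mathbf{B}(0,R_0)$ are harmless because $R_0 = C\sqrt{-t_1}$ with $t_1=-8r_0^2$ (after shifting in time below).

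Next, for each target time $t \in (0,l_0) \cap (0,\tau)$ and each $x \in M_t \cap \mathbf{B}(0,1)$, I apply Theorem \ref{smoothregcor} with $t_0=t$, $t_1=0$, and $r_0 := \sqrt{t/8}$; the density bound above, with $l_0$ chosen so that $Cl_0 \le d_0$, verifies \eqref{smoothregcora} at $t_1=0$. White's theorem then delivers $|\mathbf{A}(M_t,x)| \le Cr_0^{-1} \le Ct^{-1/2}$, which, combined with \eqref{secderivativerema}, is exactly the desired bound on $\sup|D^2 g|$ once graphicality is established. The height estimate $\sup|g(t,\cdot)| \le 2l + Ct$ follows immediately from Corollary \ref{heightboundcor} applied on $\mathbf{C}(0,1,1)$, with $r_0$ there set equal to $2l$ by assumption \eqref{flatstaygraphthma}.

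The gradient bound, and with it the graphical representation itself, is extracted from the tilt $\tau(t,x) := \|(\mathbb{R}^{\mathbf{n}}\times\{0\}^{\mathbf{k}})_{\natural} - \mathbf{T}(M_t,x)_{\natural}\|^2$. Under \mcf{} the quantity $\tau$ is a subsolution of a parabolic equation on $M_t$ with nonpositive curvature term, so Brakke's integrated identity (Proposition \ref{timedepfctprop}) applied with a Gaussian test weight gives the $L^2$-type control $\int_{M_t}\tau\,\Phi_{(t_0,x_0)} \le Cl^2$ for $(t_0,x_0)$ close to the flow. I then localize using Proposition \ref{localisingprop} with $\eta := \tau$ (legitimate because $|\nabla^{M_t}\tau| \le C|\mathbf{A}| \le Ct^{-1/2}$ by the White bound) and convert the resulting integral smallness into the pointwise estimate $\sup\tau \le C\sqrt{l+t}$ via the curvature-based Lipschitz bound on $\tau$; this gives $\sup|Dg| \le C(l+t)^{1/4}$ after using \eqref{tiltderivativerema}. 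Since this sup is $<1$ once $l_0$ is small enough, a standard continuation argument shows that the projection $\pi:M_t\cap \mathbf{C}(0,1,1)\to\mathbf{B}^{\mathbf{n}}(0,1)$ is a diffeomorphism onto its image for every $t\in I$, yielding \eqref{flatstaygraphthmb} and the remaining estimates \eqref{flatstaygraphthmc}.

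The main obstacle I foresee is precisely this upgrade of the $L^1$-in-space smallness of $\tau$ to a uniform pointwise bound of order $(l+t)^{1/4}$: the Brakke/Huisken monotonicity delivers only an integral statement, while a naive interpolation with the Lipschitz bound $|\nabla^{M_t}\tau| \le Ct^{-1/2}$ loses a factor. Balancing these two ingredients carefully, using the Gaussian area bound from Corollary \ref{barrierlem} to control the measure on balls of scale $\sqrt{t}$, is the delicate step that fixes the fourth-root exponent and must be executed before one can close the continuation argument for graphicality.
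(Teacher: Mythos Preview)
Your verification of the Gaussian density hypothesis and the application of White's theorem to obtain $|\mathbf{A}(M_t,\cdot)|\le Ct^{-1/2}$ are correct and match what happens inside the paper's Proposition~\ref{flatbecomegraphprop}. The height bound via Corollary~\ref{heightboundcor} is also fine.

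The gap is in your gradient estimate. You assert that the tilt $\tau$ is a subsolution of the heat operator on $M_t$ ``with nonpositive curvature term''; in codimension $\mathbf{k}>1$ this is not available without already knowing the gradient is small (this is exactly the Lawson--Osserman obstruction the paper cites, and why Wang's estimates need a smallness hypothesis), so invoking Proposition~\ref{localisingprop} with $\eta=\tau$ is not justified. Even granting an integral bound $\int\tau\,\Phi\lesssim l^2$, the interpolation you sketch against $|\nabla^{M_t}\tau|\le Ct^{-1/2}$ does not produce the exponent $(l+t)^{1/4}$: a naive balance on balls of radius $\sim\sqrt t$ gives a power depending on $\mathbf{n}$, not a clean fourth root. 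You flag this as the ``delicate step'' but do not carry it out, and I do not see how to close it along these lines.

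The paper avoids any evolution equation for the tilt. Instead it fixes $t$ and $\hat z\in\mathbf{B}^{\mathbf n}(0,1)$, chooses the scale
\[
\rho_0:=\tfrac14\sqrt{\,t/(l+t)\,},
\]
and observes that, by $\mathrm{lip}(f)\le l$, the initial surface over $\mathbf{B}^{\mathbf n}(\hat z,4\rho_0)$ sits in a slab of height $\sim l\rho_0$. It then applies Proposition~\ref{flatbecomegraphprop} with $\varrho=\rho_0$ and $\beta=2\sqrt{l}$. Inside that proposition, after White's curvature bound, the \emph{pointwise} tilt control comes from the purely static Lemma~\ref{multigraphlem}: a surface in a slab of height $\xi^2 r$ with curvature $\le K^2 r^{-1}$ has tilt $\le CK\xi$. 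With $r=\rho_0$, $K^2\sim t^{-1/2}\rho_0$ and $\xi^2\sim t\rho_0^{-1}$ this gives $|Dg|\le CK\xi\le C(\rho_0^{-2}t)^{1/4}=C(l+t)^{1/4}$, which is exactly \eqref{flatstaygraphthmc}. Patching over $\hat z$ and a continuity-in-time argument then rule out the empty alternative. The fourth root thus arises from the choice of $\rho_0$ together with the slab-plus-curvature tilt bound, not from any parabolic estimate on $\tau$; this is the idea your plan is missing.
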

%
%
%

%
%
%
%
\begin{prop}[{\cite[1.5]{ilnesch}}]
\label{flatbecomegraphprop}
There exist $C\in (1,\infty)$ and $\beta_0\in (0,1)$ such that the following holds:
Let $\beta\in (0,\beta_0)$, $\varrho\in (0,\infty)$,
$s_1\in\mathbb{R}$, $s_0:=s_1+\beta^2\varrho^2$, $s_2\in (s_0,\infty)$,
$z_0=(\hat{z}_0,\tilde{z}_0)\in\mathbb{R}^{\mathbf{n}+\mathbf{k}}$
and let $(M_t)_{t\in [s_1,s_2)}$ be a \mcf\; 
in $\mathbf{C}(z_0,4\varrho,4\varrho)$.
Set $J:=[s_0+\beta_0^2\varrho^2)\cap [s_0,s_2)$
Assume
\begin{align}
\label{flatbecomegraphpropa1} 
M_{s_1}\cap\mathbf{C}(z_0,4\varrho,4\varrho)
\subset\mathbf{C}(z_0,4\varrho,\beta^2\varrho),
\\
\label{flatbecomegraphpropa2} 
M_{s_0}\cap\mathbf{C}(z_0,\varrho,\varrho)
\neq\emptyset
\end{align}
Suppose there exist an open set $D_0\subset\mathbf{B}^{\mathbf{n}}(\hat{z}_0,4\varrho)$
and an $f_0:D_0\to\mathbb{R}^{\mathbf{k}}$ such that
\begin{align}
\label{flatbecomegraphpropb1} 
M_{s_1}\cap D_0\times\mathbf{B}^{\mathbf{k}}(\tilde{z}_0,\varrho)
=\mathrm{graph}(f_0),
\\
\label{flatbecomegraphpropb2} 
\sup|Df_0|\leq\beta_0,
\\
\label{flatbecomegraphpropb3} 
\mathscr{H}^{n}\left(M_{s_1}\cap S_0\right)
\leq\beta_0\beta^{{\mathbf{n}}}\varrho^{\mathbf{n}},
\end{align}
where $S_0:=(\mathbf{B}^{\mathbf{n}}(\hat{z}_0,4\varrho)\setminus D_0)\times\mathbf{B}^{\mathbf{k}}(\tilde{z}_0,\varrho)$.
Then there exists a function 
$g:J\times\mathbf{B}^{\mathbf{n}}(\hat{z}_0,\varrho)\to\mathbb{R}^{\mathbf{k}}$
with
\begin{align}
\label{flatbecomegraphpropc} 
M_{t}\cap\mathbf{C}(z_0,\varrho,\varrho)
=\mathrm{graph}(g(t,\cdot))
\end{align}
and
\begin{align}
\label{flatbecomegraphpropd}
\begin{split}
\sup|g(t,\cdot)-\tilde{z}_0|&\leq C\varrho^{-1}(t-s_1),
\\
\sup|Dg(t,\cdot)|&\leq C\sqrt[4]{\varrho^{-2}(t-s_1)},
\\
\sup|D^2g(t,\cdot)|&\leq C(t-s_1)^{-\frac{1}{2}}
\end{split}
\end{align}
for all $t\in J$.
\end{prop}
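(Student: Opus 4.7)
The plan is to follow the proof of Ilmanen--Neves--Schulze \cite[1.5]{ilnesch}, adapted so that the initial data need only be graphical off a small set $S_0$. The two key inputs are White's regularity theorem (Theorem \ref{smoothregcor}) to produce a curvature bound, and the height propagation from Corollary \ref{heightboundcor} to keep the slab condition. Once curvature and slab are both small, the tangent plane at every point of $M_t \cap \mathbf{B}(z_0,\varrho)$ must be nearly horizontal, and graphicality in $\mathbf{C}(z_0,\varrho,\varrho)$ follows by a standard continuity argument (with nonemptiness secured by \eqref{flatbecomegraphpropa2}).

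First I would fix $t \in J$ and $x \in M_t \cap \mathbf{B}(z_0,\varrho)$, and attempt to apply Theorem \ref{smoothregcor} backwards from $(t,x)$ with initial time $s_1$ and radius $r_0 = c\sqrt{t-s_1}$ for a small constant $c$; note that $\sqrt{t-s_1}\in[\beta\varrho,\sqrt{\beta^2+\beta_0^2}\,\varrho]$, so $r_0$ fits inside the cylinder $\mathbf{C}(z_0,4\varrho,4\varrho)$. The whole game is then to verify the Gaussian density bound \eqref{smoothregcora}, that is
\begin{align*}
\int_{M_{s_1} \cap \mathbf{B}(x,R_0)} \Phi_{(s,y)}(s_1,\cdot)\, d\mu_{s_1} \leq 1+d_0
\end{align*}
uniformly in $(s,y)$ in the relevant range. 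I would split this integral into three contributions. The graphical part $M_{s_1}\cap D_0\times\mathbf{B}^{\mathbf{k}}(\tilde z_0,\varrho)=\mathrm{graph}(f_0)$ is, by the slab condition \eqref{flatbecomegraphpropa1} of height $\beta^2\varrho$ and the gradient bound \eqref{flatbecomegraphpropb2}, contained in a $C(\beta+\beta_0)$-neighborhood (in the sense of Lipschitz graphs over a horizontal plane) of the hyperplane $\mathbb{R}^{\mathbf{n}}\times\{\tilde z_0\}$; by Remark \ref{heatkernelrem} the heat-kernel integral over this hyperplane equals one, and the perturbation is $O(\beta+\beta_0)$. For the non-graphical part $S_0$, I use the pointwise bound $\Phi_{(s,y)}(s_1,\cdot)\leq C(s-s_1)^{-\mathbf{n}/2}\leq C(\beta\varrho)^{-\mathbf{n}}$ together with \eqref{flatbecomegraphpropb3}, so that this contribution is at most $C\beta_0\beta^{\mathbf{n}}\varrho^{\mathbf{n}}\cdot(\beta\varrho)^{-\mathbf{n}}=C\beta_0$; this is precisely why the measure bound in \eqref{flatbecomegraphpropb3} is scaled by $\beta^{\mathbf{n}}$. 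The remaining tail from outside $\mathbf{C}(z_0,4\varrho,4\varrho)$ is controlled using Corollary \ref{barrierlem} combined with the Gaussian decay $e^{-c/\beta^2}$, hence negligible. Shrinking $\beta_0$ (and requiring $\beta<\beta_0$) forces the total below $1+d_0$.

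Once White's theorem delivers $|\mathbf{A}(M_t,x)|\leq C/\sqrt{t-s_1}$ for $x\in M_t\cap\mathbf{B}(z_0,\varrho)$, I combine this with Corollary \ref{heightboundcor}, which says that $M_t\cap\mathbf{C}(z_0,2\varrho,2\varrho)$ still lies in a slab of height $h(t)\leq\beta^2\varrho + C(t-s_1)\varrho^{-1}\leq C(t-s_1)\varrho^{-1}$. The tilt $\theta$ of the tangent plane at $x$ then satisfies $\theta d - Cd^2(t-s_1)^{-1/2}\leq h(t)$ for every $d\leq\sqrt{t-s_1}$; optimizing in $d$ yields the tilt bound $\theta\leq C\sqrt[4]{(t-s_1)\varrho^{-2}}$, which is small on $J$. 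Small tilt at every point, together with the nonemptiness \eqref{flatbecomegraphpropa2} and connectedness of $\mathbf{B}^{\mathbf{n}}(\hat z_0,\varrho)$, implies by a standard continuity argument (ruling out vertical tangents in the projection $M_t\cap\mathbf{C}(z_0,\varrho,\varrho)\to\mathbf{B}^{\mathbf{n}}(\hat z_0,\varrho)$) the graph representation \eqref{flatbecomegraphpropc}. The three estimates in \eqref{flatbecomegraphpropd} are then immediate: the first from the slab bound, the second from the tilt bound via \eqref{tiltderivativerema}, and the third from the curvature bound via \eqref{secderivativerema}. The main obstacle is the Gaussian density estimate, where the delicate point is that the scale $\beta\varrho$ on which the heat kernel is evaluated at time $s_1$ must be simultaneously (i) large enough that the $S_0$ contribution is $O(\beta_0)$ and (ii) small enough that the graphical contribution from a slab of height $\beta^2\varrho$ looks essentially flat; this is exactly what the coupling $h=\beta^2\varrho$ and $\mathscr H^{\mathbf{n}}(S_0)\leq\beta_0\beta^{\mathbf{n}}\varrho^{\mathbf{n}}$ in the hypotheses is arranged to achieve.
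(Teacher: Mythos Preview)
Your overall strategy---White's regularity for the curvature bound, height propagation for the slab, then geometric conversion to graphicality---matches the paper's, and your verification of the Gaussian density hypothesis \eqref{smoothregcora} is essentially the same decomposition the paper uses (the ``tail'' piece you mention is actually vacuous, since after normalisation $R_0$ can be taken so that $\mathbf{B}(x,R_0)\subset\mathbf{C}(z_0,4\varrho,4\varrho)$, but this does no harm).

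There is, however, a genuine gap in the final step. From small tilt at every point of $M_t\cap\mathbf{C}(z_0,\varrho,\varrho)$ you correctly conclude that the projection to $\mathbf{B}^{\mathbf{n}}(\hat z_0,\varrho)$ has no vertical tangents, hence is a local diffeomorphism. Combined with properness this makes $M_t\cap\mathbf{C}(z_0,\varrho,\varrho)$ a disjoint union of $m_0$ graphs over $\mathbf{B}^{\mathbf{n}}(\hat z_0,\varrho)$ (this is the content of Lemma \ref{multigraphlem} in the paper). But nothing in your argument forces $m_0\leq 1$: connectedness of the base and nonemptiness \eqref{flatbecomegraphpropa2} together only rule out $m_0=0$; two parallel nearly-horizontal sheets would satisfy every condition you have imposed. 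The ``standard continuity argument'' you invoke does not see this, because the curvature and tilt bounds only hold on $J$, not back at $s_1$ where the single-sheet structure is known on $D_0$.

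The paper closes this gap by a mass comparison: using Corollary \ref{barrierlem} one has
\[
m_0\int_{\mathbb{R}^{\mathbf{n}}}\varphi_\varrho^3(0,(\hat x,0))\,d\mathscr{L}^{\mathbf{n}}
\;\leq\;
\int\varphi_\varrho^3(t_0,\cdot)\,d\mu_{t_0}+C\beta_0
\;\leq\;
\int\varphi_\varrho^3(0,\cdot)\,d\mu_{s_1}+C\beta_0,
\]
and the right-hand side is estimated by \eqref{flatbecomegraphpropb1}--\eqref{flatbecomegraphpropb3} as at most $(1+C\beta_0)$ times the same planar integral. This forces $m_0<2$ for $\beta_0$ small. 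You need to insert an argument of this type; the hypotheses \eqref{flatbecomegraphpropb1}--\eqref{flatbecomegraphpropb3} are used a second time here, not only in the Gaussian density estimate.
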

\begin{rem}
\label{flatbecomegraphrem}
\begin{enumerate}
\item
A similar result for Brakke flows can be found in the author's theis \cite[11.7]{lahiri}.
\item 
Assumption \eqref{flatbecomegraphpropa2} 
can be replaced by
\begin{align}
\label{flatbecomegraphpropb4} 
\mathbf{B}^{\mathbf{n}}(\hat{a}_0,\sqrt{\beta}\varrho)\subset D
\;\;\ \text{for some}\;\; \hat{a}_0\in \mathbf{B}^{\mathbf{n}}(\hat{z}_0,\varrho).
\end{align}
\item
If $M_0$ has bounded curvature and $\mathbf{n}\geq\mathbf{k}+1$,
assumption \eqref{flatbecomegraphpropb3} 
follows from $\mathbf{B}^{\mathbf{n}}(\hat{z}_0,4\varrho)\setminus D_0\subset\mathbf{B}^{\mathbf{n}}(\hat{y}_0,\beta\varrho)$,
see Lemma \ref{pcurveboundlem}.
\end{enumerate}
\end{rem}

\begin{proof}[Proof of Proposition \ref{flatbecomegraphprop}]
We may assume $s_1=0$ $z_0=0$ and $\varrho=1$.
The idea is, that the almost graphical representation
implies \eqref{smoothregcora}. 
Hence Theorem \ref{smoothregcor} yields curvature bounds.
Combining these bounds with the slab condition implies,
that the flow has to be a union of graphs.
Finally by continuity in time and assumption \eqref{flatbecomegraphpropa2} 
we see, that we actually have exactly one graph.

Let $t_0\in J$ be fixed but arbitrary.
In view of \eqref{flatbecomegraphpropa1},
Corollary \ref{heightboundcor}
with $R=2$, $r_0=\beta^2$ yields
\begin{align}
\label{flatbecomegraphprop11}
M_{t}\cap\mathbf{C}(0,2,2)
\subset\mathbf{C}(0,2,C_1t_0)
\end{align}
for all $t\in [0,t_0]$ for some constant $C_1\in (1,\infty)$.

Consider $F_0(\hat{v}):=(\hat{v},f_0(\hat{v}))$.
Assumptions \eqref{flatbecomegraphpropb1}-\eqref{flatbecomegraphpropb3} imply
\begin{align}
\label{flatbecomegraphprop23}
\int_{M_{0}\cap\mathbf{B}(0,4)}\phi
\leq
(1+C\beta_0)\int_{D_0}\phi (F_0(\hat{v}))d\mathscr{L}^{\mathbf{n}}(\hat{v})
+\sup|\phi|\beta_0\beta^{{\mathbf{n}}}
\end{align}
for all bounded $\phi\in\mathcal{C}^{\infty}(\mathbb{R}^{\mathbf{n}+\mathbf{k}})$.
Here we used that by \eqref{flatbecomegraphpropb2} we have $JF_0\leq 1+C\beta_0$. 
Also we used that by \eqref{flatbecomegraphpropa1} the set
$M_{0}\cap\mathbf{B}(0,4)$ is contained in $\left(D_0\times\mathbf{B}^{\mathbf{k}}(0,1)\right)\cup S_0$.

Temporarily fix an arbitrary $y\in M_{t_0}\cap\mathbf{C}(0,2,2)$.
We want to use Theorem \ref{smoothregcor} 
with $x_0=y$, $r_0=\frac{\sqrt{t_0}}{\sqrt{8}}$, $R_0=1$ and $t_1=0$.
Thus for arbitrary $(s,x)\in [t_0-\frac{t_0}{2},t_0]\times\mathbf{B}(y,\frac{\sqrt{t_0}}{\sqrt{2}})$,
we have to show
\begin{align}
\label{flatbecomegraphprop31}
\int_{M_{0}\cap\mathbf{B}(0,4)}\Phi_{(s,x)}\leq 1 + d_0,
\end{align}
where $d_0$ is from Theorem \ref{smoothregcor}.
Here we used $C\sqrt{t_0}\leq C\beta_0\leq 1$ for $\beta_0$ small enough.
Also we used $\mathbf{B}(y,1)\subset\mathbf{B}(0,4)$.
By Definition \ref{heatkerneldef} 
we have
\begin{align*}
\int_{D_0}\Phi_{(s,x)}(0,F_0(\hat{v}))d\mathscr{L}^{\mathbf{n}}(\hat{v})
\leq
 \int_{\mathbf{B}^{\mathbf{n}}(0,4\rho_0)}\Phi_{(s,(\hat{x},0))}(0,(\hat{v},0))d\mathscr{L}^{\mathbf{n}}(\hat{v})
\leq 1.
\end{align*}
As well as $\sup|\Phi_{(s,x)}(0,\cdot)|
\leq\left(4\pi s\right)^{-\frac{\mathbf{n}}{2}}
\leq\left(2\pi t_0\right)^{-\frac{\mathbf{n}}{2}}
\leq\beta^{-\mathbf{n}}$.
Then for $\beta_0$ small enough,
inequality \eqref{flatbecomegraphprop23} with $\phi=\Phi_{(s,x)}(0,\cdot)$
establishes \eqref{flatbecomegraphprop31}.
Thus Theorem \ref{smoothregcor} yields
\begin{align}
\label{flatbecomegraphprop35}
\left|\mathbf{A}(M_{t_0},y)\right|\leq C_2t_0^{-\frac{1}{2}}
\end{align}
for all $y\in M_{t_0}\cap\mathbf{C}(0,2,2)$ for some constant $C_2\in (1,\infty)$.

In view of \eqref{flatbecomegraphprop11}
we can now apply Lemma \ref{multigraphlem}
with $r=1$, $K^2=C_2t_0^{-\frac{1}{2}}$ and $\xi^2=C_1t_0$ to obtain
an $m_0\in\mathbb{N}\cup\{0\}$
such that
\begin{align}
\label{flatbecomegraphprop43}
M_{t_0}\cap\mathbf{C}(0,1,1)=\bigcup_{i=1}^{m_0}\mathrm{graph}(g_i)
\end{align}
for functions $g_i:\mathbf{B}^{\mathbf{n}}(0,1)\to\mathbb{R}^{\mathbf{k}}$ 
that satisfy \eqref{flatbecomegraphpropd} for $t=t_0$.
Here we used $t_0\leq\beta_0^2$ and chose $\beta_0$ small enough.

We want to show $m_0\leq 1$.
Consider $\varphi$ from Definition \ref{barrierfctdef} 
and set $K:=\int\varphi^3(0,(\hat{x},0))d\mathscr{L}^{\mathbf{n}}(\hat{x})$.
We have
\begin{align*}
m_0K
&\leq
\int_{\mathbf{B}(0,1)}\varphi^3(0,(\hat{x},0))d\mu_{t_0}(x)
\leq
\int_{\mathbf{B}(0,1)}\varphi^3(t_0,x)d\mu_{t_0}(x) + C\beta_0
\\&\leq
\int_{\mathbf{B}(0,1)}\varphi^3(0,x)d\mu_{0}(x) + C\beta_0
\leq
(K+C\beta_0),
\end{align*}
which implies $m_0<2$ for $\beta_0$ small enough.
Here we used \eqref{flatbecomegraphprop43} for the first inequality,
\eqref{flatbecomegraphprop11} and $t_0\leq\beta_0^2$ for the second,
Corollary \ref{barrierlem} for the third
and \eqref{flatbecomegraphprop23} for the fourth.
Now \eqref{flatbecomegraphprop43} yields a
$g_{t_0}:\mathbf{B}^{\mathbf{n}}(0,1)\to\mathbb{R}^{\mathbf{k}}$
which satisfies \eqref{flatbecomegraphpropd} and such that
\begin{align}
\label{flatbecomegraphprop44}
\begin{split}
M_{t_0}\cap\mathbf{C}(0,1,1)
=\mathrm{graph}(g_{t_0})
\;\;\text{or}\;\;
M_{t_0}\cap\mathbf{C}(0,1,1)=\emptyset.
\end{split}
\end{align}
This statement holds for all $t_0\in J$.
Thus by continuity of $\mu_t\left(\mathbf{C}(0,1,1)\right)$ and \eqref{flatbecomegraphpropa2} we see, 
that always the first alternative has to be true,
then \eqref{flatbecomegraphprop44} implies the result.
\end{proof}

%
%
%
%
%
%
%
%
\begin{proof}[{Proof of Theorem \ref{flatstaygraphthm}}]
We may assume $a=0$ 
and $\rho=1$. In particular $f(0)=0$ and $\sup|f|\leq 2l$.
Fix $t\in I$.
Let $\hat{z}\in\mathbf{B}^{\mathbf{n}}(0,1)$ be arbitrary and set
\begin{align*}
\rho_0:=2^{-2}\sqrt{(l+t)^{-1}t},
\;\;\;
z:=(\hat{z},f(\hat{z})).
\end{align*}
Note that $4\rho_0+|\hat{z}|< 2$ 
and $4\rho_0+|\tilde{z}|< 2$.
In view of \eqref{flatstaygraphthma} and by $\mathrm{lip}(f)\leq l$ we see
\begin{align}
\label{flatstaygraphthm21}
M_{0}\cap\mathbf{C}(z,4\rho_0,2)
=M_{0}\cap\mathbf{C}(z,4\rho_0,4l\rho_0).
\end{align}
We want to use Proposition \ref{flatbecomegraphprop} 
with $z_0=z$, $s_1=0$, $s_2=\tau$, $\beta=2\sqrt{l}$ and $\varrho=\rho_0$.
Note that $C\sqrt{l_0}\leq\beta_0$
for $l_0$ small enough depending on $\beta_0$.
Then there exists a function
$g_{\hat{z}}:\mathbf{B}^{\mathbf{n}}(\hat{z},\rho_0)\to\mathbb{R}^{\mathbf{k}}$
with
\begin{align}
\label{flatstaygraphthm22}
\begin{split}
M_{t}\cap\mathbf{C}(z,\rho_0,\rho_0)
=\mathrm{graph}(g_{\hat{z}})
\;\;\text{or}\;\;
M_{t}\cap\mathbf{C}(z,\rho_0,\rho_0)=\emptyset.
\end{split}
\end{align}
Moreover we have
\begin{align}
\begin{split}
\label{flatstaygraphthm25}
\sup|g_{\hat{z}}|\leq \sup|f| + C t\leq 2l + Ct
\\
\sup|Dg_{\hat{z}}|\leq C\sqrt[4]{\rho_0^{-2}t}
\leq C\sqrt[4]{l+t},
\;\;\;
\sup|D^2g_{\hat{z}}|
\leq Ct^{-\frac{1}{2}}
\end{split}
\end{align}
where we used $\rho_0=2^{-2}\sqrt{(l+t)^{-1}t}$.
Here the height bound follows from Corollary \ref{heightboundcor}
combined with \eqref{flatstaygraphthma}, $\mathrm{lip}(f)\leq l$ and $0\in M_0$.

As $t\leq Cl_0\rho_0^2$ and $l_0$ can be chosen small,
we can use Corollary \ref{barrierlem} and \eqref{flatstaygraphthm21},
to see that \eqref{flatstaygraphthm22} actually holds in the larger cylinder
\begin{align*}
\begin{split}
M_{t}\cap\mathbf{C}((\hat{z},0),\rho_0,1)
=\mathrm{graph}(g_{\hat{z}})
\;\;\text{or}\;\;
M_{t}\cap\mathbf{C}((\hat{z},0),\rho_0,1)=\emptyset.
\end{split}
\end{align*}
Choosing different $\hat{z}\in\mathbf{B}^{\mathbf{n}}(0,1)$ ,
we obtain a graphical representation
$g_{t}:\mathbf{B}^{\mathbf{n}}(0,1)\to\mathbb{R}^{\mathbf{k}}$
which satisfies \eqref{flatstaygraphthmc} and such that
\begin{align}
\label{flatstaygraphthm24} 
\begin{split}
M_{t}\cap\mathbf{C}(0,1,1)
=\mathrm{graph}(g_{t})
\;\;\text{or}\;\;
M_{t}\cap\mathbf{C}(0,1,1)=\emptyset.
\end{split}
\end{align}
Statement \eqref{flatstaygraphthm24} holds for all $t\in I$.
Thus by continuity of $\mu_t\left(\mathbf{C}(0,1,1)\right)$ and \eqref{flatstaygraphthma} we see, 
that always the first alternative has to be true,
then \eqref{flatstaygraphthm24} implies the result.
\end{proof}

%
%
%
%
%
%
\begin{proof}[Proof of Remark \ref{flatbecomegraphrem}.2]
We may assume $s_1=0$ and $\varrho=1$.
Consider the setting of Proposition \ref{flatbecomegraphprop} 
with \eqref{flatbecomegraphpropa2} replaced by \eqref{flatbecomegraphpropb4}.
Then Theorem \ref{flatstaygraphthm} with
$\tau=s_2$, $a=(\hat{a}_0,f_0(\hat{a}_0))$, $l=l_0$ and $\rho=\sqrt{\beta}$
implies,
that \eqref{flatbecomegraphpropa2} holds nevertheless.
Here we used $\beta\leq\beta_0$ and chose $\beta_0$ small. 
\end{proof}

%
%
%
%
%
%
%
%
\subsection{Stay graphical for bounded curvature}
\label{smallcurve}
Consider an initial manifold that is graphical
with possibly large gradient, but bounded curvature.
Then we can use Theorem \ref{flatstaygraphthm} locally
to obtain the statement below.

%
%
%
%
\begin{prop}
\label{smallcurvestaygraphlem}
There exists a $C\in (1,\infty)$ and for all
$\kappa\in (0,1)$, $K\in [1,\infty)$ there exists a $\sigma_1\in (1,\infty)$
such that the following holds:
Let $\varrho,\Gamma,\tau\in (0,\infty)$, $t_0\in\mathbb{R}$, 
$z_0=(\hat{z}_0,\tilde{z}_0)\in\mathbb{R}^{\mathbf{n}+\mathbf{k}}$ 
and let $(M_t)_{t\in [t_0,t_0+\tau)}$ be a \mcf\; in $\mathbf{C}(0,2\varrho,\Gamma+\varrho)$.
Suppose there exists an $u:\mathbf{B}^{\mathbf{n}}(\hat{z}_0,\varrho)\to\mathbb{R}^{\mathbf{k}}$ 
with $\sup|u-\tilde{z}_0|<\Gamma-\frac{\varrho}{2}$ and
\begin{align}
\label{smallcurvestaygraphlema} 
M_{t_0}\cap\mathbf{C}(z_0,2\varrho,\Gamma+\varrho)
=\mathrm{graph}(u).
\end{align}
Moreover suppose
\begin{align}
\label{smallcurvestaygraphlemb1}
\|\mathbf{T}(M_{t_0},x)_{\natural}
-\left(\mathbb{R}^{\mathbf{n}}\times\{0\}^{\mathbf{k}}\right)_{\natural}\|
&\leq 1-2\kappa,
\\
\label{smallcurvestaygraphlemb2}
\left|\mathbf{A}(M_{t_0},x)\right|
&\leq K\varrho^{-1}
\end{align}
for all $x\in M_{t_0}\cap\mathbf{C}(z_0,2\varrho,\Gamma+\varrho)$.
Set $I:=(t_0,t_0+\sigma_1\varrho^2)\cap (t_0,t_0+\tau)$.
Then there exists a
$v:I\times\mathbf{B}^{\mathbf{n}}(\hat{z}_0,\varrho)\to\mathbb{R}^{\mathbf{k}}$
with
\begin{align}
\label{smallcurvestaygraphlemc} 
M_{t}\cap\mathbf{C}(z_0,\varrho,\Gamma)
=\mathrm{graph}(v(t,\cdot))
\end{align}
for all $t\in I$. Moreover we have
\begin{align}
\label{smallcurvestaygraphlemd1}
\|\mathbf{T}(M_t,x)_{\natural}
-\left(\mathbb{R}^{\mathbf{n}}\times\{0\}^{\mathbf{k}}\right)_{\natural}\|
&\leq 1-\kappa,
\\
\label{smallcurvestaygraphlemd2}
\left|\mathbf{A}(M_{t},x)\right|
&\leq C(t-t_0)^{-\frac{1}{2}}
\end{align}
for all $t\in I$ and for all $x\in M_t\cap\mathbf{C}(z_0,\varrho,\Gamma)$.
\end{prop}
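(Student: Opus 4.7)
The plan is to reduce to Theorem \ref{flatstaygraphthm} through local parametrizations over tangent spaces. Fix any $x_\ast \in M_{t_0} \cap \mathbf{C}(z_0, 3\varrho/2, \Gamma)$ and set $T_\ast := \mathbf{T}(M_{t_0}, x_\ast)$. The curvature bound \eqref{smallcurvestaygraphlemb2} implies, via a standard ODE argument for the unit normal, that $M_{t_0} \cap \mathbf{B}(x_\ast, 2r_\ast)$ is a $\mathcal{C}^2$ graph over $T_\ast$ with gradient at most $l_0$, where $r_\ast := c l_0 K^{-1} \varrho$ and $l_0 \in (0,1)$ is a small constant to be chosen (bounded by the one from Theorem \ref{flatstaygraphthm} and depending on $\kappa$).

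After rotating coordinates so that $T_\ast$ aligns with $\mathbb{R}^{\mathbf{n}} \times \{0\}^{\mathbf{k}}$, Theorem \ref{flatstaygraphthm} applied with $\rho = r_\ast$ and $l = l_0$ yields, for every $t$ with $t - t_0 \in (0, l_0 r_\ast^2) \cap (0, \tau)$, a smooth function $g_\ast(t,\cdot)$ whose graph is $M_t$ in a cylinder around $x_\ast$ of horizontal radius $r_\ast$ in the rotated frame, together with the bounds $\sup|Dg_\ast| \leq C l_0^{1/4}$ and $\sup|D^2 g_\ast| \leq C(t-t_0)^{-1/2}$. Setting $\sigma_1 := l_0 r_\ast^2 \varrho^{-2} = c l_0^3 K^{-2}$ therefore fixes the interval $I$. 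The tangent space of $M_t$ at each point in this neighbourhood differs from $T_\ast$ in tilt by at most $|Dg_\ast| \leq C l_0^{1/4}$; combining this with \eqref{smallcurvestaygraphlemb1} bounds the tilt from horizontal by $(1-2\kappa) + Cl_0^{1/4} \leq 1 - \kappa$ provided $l_0$ is small enough depending on $\kappa$, proving \eqref{smallcurvestaygraphlemd1}. The Hessian bound on $g_\ast$ together with \eqref{secderivativerema} then yields \eqref{smallcurvestaygraphlemd2}.

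The tilt bound \eqref{smallcurvestaygraphlemd1} renders the vertical projection $M_t \to \mathbb{R}^{\mathbf{n}}$ a local diffeomorphism with controlled derivative, so each local graph over $T_\ast$ rewrites as a local graph over $\mathbb{R}^{\mathbf{n}}$ near $\hat{x}_\ast$. At $t = t_0$ these local graphs coincide with the globally given $u$, and continuity in $t$ together with the uniqueness of graphical representation propagates the patching throughout $I$, producing a single function $v : I \times \mathbf{B}^{\mathbf{n}}(\hat{z}_0, \varrho) \to \mathbb{R}^{\mathbf{k}}$ realizing \eqref{smallcurvestaygraphlemc}. The initial vertical bound $\sup|u - \tilde{z}_0| < \Gamma - \varrho/2$ combined with Corollary \ref{heightboundcor} ensures $v(t,\cdot)$ stays in $\mathbf{B}^{\mathbf{k}}(\tilde{z}_0, \Gamma)$ for $t \in I$, provided $\sigma_1$ is shrunk accordingly.

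The main obstacle is the bookkeeping between the two coordinate systems --- the local tilted frames adapted to $T_\ast$ at different centers $x_\ast$ versus the ambient horizontal frame --- and verifying that the resulting local graphs patch consistently. The tilt hypothesis \eqref{smallcurvestaygraphlemb1} is essential here: without it, the vertical projection might fail to be injective on $M_t$, and there would be no canonical way to identify the local tangent-space graphs with a graph over the horizontal direction.
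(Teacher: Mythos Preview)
Your proposal is correct and follows essentially the same approach as the paper: use the curvature bound to parametrize $M_{t_0}$ locally over its tangent spaces with gradient $\leq l_0$ (the paper invokes Corollary~\ref{flatparacor} for this rather than an ODE argument), apply Theorem~\ref{flatstaygraphthm} in each rotated frame, combine the resulting tilt bound with \eqref{smallcurvestaygraphlemb1} via the triangle inequality, and then assemble the global graph. The only organizational difference is that the paper fixes a point $x\in M_s$ at a \emph{later} time and uses Corollary~\ref{barrierlem} to locate a nearby $z\in M_{t_0}$, which cleanly guarantees that every point of $M_s\cap\mathbf{C}(z_0,\varrho,\Gamma)$ is captured by some local rotated graph; your forward-in-time patching needs this same ingredient (implicitly hidden in your ``continuity in $t$''), and the paper then concludes single-sheetedness via Lemma~\ref{constsheatlem} rather than an ad hoc patching argument.
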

%
%
\begin{rem}
See Chen and Yin \cite[7.5]{chenyin} for a better curvature estimate than \eqref{smallcurvestaygraphlemd2}.
\end{rem}
%
%
\begin{proof}
We may assume $z_0=0$, $t_0=0$ and $\varrho=1$.
Fix $s\in I$.
Let $x=(\hat{x},\tilde{x})\in M_s\cap \mathbf{C}(0,1,\Gamma)$ 
be arbitrary.
Consider $r_0$ and $z$ such that
\begin{align*}
r_0:=(c_1\kappa)^{5}K^{-1},
\;\;\;
z=(\hat{z},\tilde{z})\in M_0\cap\mathbf{B}(x,r_0)
\end{align*}
for some constant $c_1\in (0,1)$, which will be chosen below.
Note that such a $z$ always exists, by Corollary \ref{barrierlem} 
and $s\leq\sigma_1\leq cr_0^2$.
By \eqref{smallcurvestaygraphlema}
and for $c_1$ small enough we can estimate
\begin{align}
\label{smallcurvestaygraphlem12}
|\tilde{x}|
\leq|\tilde{z}|+r_0
\leq\sup|u|+\frac{1}{2}
<\Gamma.
\end{align}

Let $S\in\mathbf{SO}(\mathbf{n}+\mathbf{k})$ be such that 
$S[\mathbb{R}^{\mathbf{n}}\times\{0\}^{\mathbf{k}}]=\mathbf{T}(M_0,z)$.
In view of \eqref{smallcurvestaygraphlema} and \eqref{smallcurvestaygraphlemb2}
we can apply Corollary \ref{flatparacor}
with $R=K^{-1}(c_1\kappa)^4$, $a=z$,  $L=C\kappa^{-1}$ 
and $\alpha=(c_1\kappa)^4$ to obtain a 
$g_{0}:\mathbf{B}^{\mathbf{n}}(0,K^{-1}(c_1\kappa)^4)\to\mathbb{R}^{\mathbf{k}}$ 
with  
\begin{align}
\label{smallcurvestaygraphlem22}
M_0\cap\mathbf{B}(\hat{z},8r_0)
\subset S(\mathrm{graph}(g_{0}))+z
\subset M_0.
\end{align}
Here we estimated $(c_1\kappa)^4\leq\alpha_0$ 
and $8r_0\leq\left(C\kappa^{-1}\right)^{-1}K^{-1}(c_1\kappa)^4$, 
for $c_1$ small enough.
Also we used that by 
\eqref{tiltderivativerema} and \eqref{smallcurvestaygraphlemb1}
we have $\sup|Df_0|\leq C\kappa^{-1}$.
Corollary \ref{flatparacor} then implies the following bounds
\begin{align}
\label{smallcurvestaygraphlem23}
\sup|g_{0}|\leq CK^{-1}(c_1\kappa)^8\leq r_0.
\;\;\;
\sup|Dg_{0}|\leq (c_1\kappa)^4\leq l_0,
\end{align}
where $l_0$ is the constant from Theorem \ref{flatstaygraphthm}.
Here we used $r_0=(c_1\kappa)^{5}K^{-1}$, $\kappa\leq 1$
and we chose $c_1$ small enough.

For $t\in [0,\tau)$ set $N_t:=S^{-1}[M_t-z]$.
In view of \eqref{smallcurvestaygraphlem22} and \eqref{smallcurvestaygraphlem23} 
we can apply Theorem \ref{flatstaygraphthm}
with $(M_t)$ replaced by $(N_t)$, 
$a=0$, $\rho=4r_0$, $\Gamma=4r_0$ and $l=(c_1\kappa)^4$.
Thus we obtain a 
$g_{s}:\mathbf{B}^{\mathbf{n}}(0,2r_0)\to\mathbb{R}^{\mathbf{k}}$ 
with  
$N_s\cap\mathbf{C}(0,2r_0,2r_0)
=\mathrm{graph}(g_{s}))$.
In particular as $z\in M_0\cap\mathbf{B}(x,r_0)$,
there exists a $v:=(\hat{v},g_{s}(\hat{v}))\in\mathrm{graph}(g_{s}))$ 
such that $x=Sv+z$.
Also $g_{s}$ satisfies
\begin{align}
\label{smallcurvestaygraphlem32}
\sup|Dg_{s}|\leq C\sqrt[4]{(c_1\kappa)^4+s}\leq Cc_1\kappa,
\;\;\;
\sup|D^2g_{s}|\leq C s^{-\frac{1}{2}}.
\end{align}
Here we used $s\leq\sigma_1\leq cr_0^2$.
Using $S[\mathbb{R}^{\mathbf{n}}]=\mathbf{T}(M_0,z)$,
$S[\mathbf{T}(N_s,v)]=\mathbf{T}(M_s,x)$,
\eqref{smallcurvestaygraphlemb1}, \eqref{smallcurvestaygraphlem32} and \eqref{tiltderivativerema}
we have
\begin{align*}
\|\mathbf{T}(M_s,x)_{\natural}-(\mathbb{R}^{\mathbf{n}})_{\natural}\|
&\leq
\|\mathbf{T}(N_s,v)_{\natural}
-(\mathbb{R}^{\mathbf{n}})_{\natural}\|
+
\|(\mathbb{R}^{\mathbf{n}})_{\natural}
-\mathbf{T}(M_0,z)_{\natural}\|
\\&\leq C c_1\kappa+1-2\kappa
\leq 1-\kappa,
\end{align*}
where we identified $\mathbb{R}^{\mathbf{n}}$ with $\mathbb{R}^{\mathbf{n}}\times\{0\}^{\mathbf{k}}$ 
and we chose $c_1$ small enough.
Similarly using \eqref{smallcurvestaygraphlem32} and \eqref{secderivativerema} yields
\begin{align*}
\left|\mathbf{A}(M_{s},x)\right|
=\left|\mathbf{A}(N_{s},v)\right|
&\leq C s^{-\frac{1}{2}}.
\end{align*}
As $s$ and $x$ were arbitrary this already establishes 
\eqref{smallcurvestaygraphlemd1} and \eqref{smallcurvestaygraphlemd2}.
In view of \eqref{smallcurvestaygraphlem12} and \eqref{smallcurvestaygraphlemd1}
we can use Lemma \ref{constsheatlem} 
with $[t_1,t_2)=[0,\sigma_1)$, $r=1$ and $\Gamma_0=\Gamma$
to obtain the existence of the desired $v$.
Here we used that by \eqref{smallcurvestaygraphlema} the $m_0$
from Lemma \ref{constsheatlem} has to be $1$.
\end{proof}

\section{Hypersurfaces}
\label{one_codim}
Here we consider mean curvature flows of hypersurfaces.
In particular all results from the first part carry over with $\mathbf{k}=1$.
Having only one co-dimension allows the usage of the local estimates by Ecker and Huisken from \cite{eckerh1}.
We state the two theorems we need below.

%
%
%
%
%
%
%
%
%
%
\setcounter{thm}{0}

%
%
%
%
\begin{thm}[{\cite[2.1]{eckerh1}}]
\label{localgradthm}
Let $\varrho\in (0,\infty)$, $t_1\in\mathbb{R}$, $t_2\in (t_1,\infty)$,  $x_0\in\mathbb{R}^{\mathbf{n}+1}$
and let $(M_t)_{t\in [t_1,t_2)}$ 
be a \mcf\; in $\mathbf{B}(x_0,\varrho)$.
Set $\varrho(t):=\sqrt{(\varrho^2-2n(t-t_1))_+}$
and suppose
\begin{align*}
\nu_t(x)\cdot\mathbf{e}_{\mathbf{n}+1}>0
\end{align*}
for all $x\in M_t\cap\mathbf{B}(x_0,\varrho(t))$
for all $t\in[t_1,t_2)$.
Set $v(t,x):=(\nu_t(x)\cdot\mathbf{e}_{\mathbf{n}+1})^{-1}$.
Then
\begin{align*}
&v(t,x)
\left(1-\varrho^{-2}\left(|x-x_0|^2 + 2n(t-t_1)\right)\right)
\leq\sup_{\hat{x}\in\mathbf{B}^{\mathbf{n}}(\hat{x}_0,\varrho)}v(t_1,x)
\end{align*}
holds for all $x\in\mathbf{B}(x_0,\varrho(t))$
for all $t\in[t_1,t_2)$.
\end{thm}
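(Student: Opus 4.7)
The plan is a parabolic maximum principle argument for $\eta(t,x) := v(t,x)\,\varphi(t,x)$, where $\varphi := \varphi_{(t_1,x_0),\varrho}$ is the Brakke cutoff of Definition \ref{barrierfctdef}. The crucial structural fact is that $\{\varphi(t,\cdot)>0\} = \mathbf{B}(x_0,\varrho(t))$, matching exactly the region where the hypothesis $\nu_t\cdot\mathbf{e}_{\mathbf{n}+1}>0$ keeps $v = (\nu_t\cdot\mathbf{e}_{\mathbf{n}+1})^{-1}$ finite and smooth; moreover $\eta$ vanishes on the parabolic boundary of the relevant region.

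The two evolution equations needed, with $\partial_t$ interpreted along MCF particle trajectories (so for the extrinsic function $\varphi$ one has $\partial_t\varphi = \partial_t^{\mathrm{Eul}}\varphi + \mathbf{H}\cdot D\varphi$), are the standard gradient evolution
\[
(\partial_t - \Delta_{M_t}) v \;=\; -|\mathbf{A}|^2 v - 2v^{-1}|\nabla^{M_t} v|^2,
\]
derivable from $\partial_t\nu = -\nabla^{M_t}\mathbf{H}\cdot\mathbf{e}_{\mathbf{n}+1}$ and a Simons-type identity, together with
\[
(\partial_t - \Delta_{M_t})\varphi \;=\; 0 \qquad \text{on } \{\varphi>0\},
\]
which follows from a direct calculation using $\Delta_{M_t}|x-x_0|^2 = 2\mathbf{n}+2\mathbf{H}\cdot(x-x_0)$.

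Combining via the Leibniz rule
\[
(\partial_t - \Delta_{M_t})\eta \;=\; \varphi(\partial_t - \Delta_{M_t})v + v(\partial_t - \Delta_{M_t})\varphi - 2\nabla^{M_t} v\cdot\nabla^{M_t}\varphi
\]
and evaluating at a spacetime maximum $(t^*,x^*)\in (t_1,t_2)\times\{\varphi>0\}$ of $\eta$: the condition $\nabla\eta = 0$ gives $\varphi\nabla v = -v\nabla\varphi$, whence $-2\nabla v\cdot\nabla\varphi = 2v\varphi^{-1}|\nabla\varphi|^2$ while $-2\varphi v^{-1}|\nabla v|^2 = -2v\varphi^{-1}|\nabla\varphi|^2$; these cancel, leaving
\[
(\partial_t - \Delta_{M_t})\eta\big|_{(t^*,x^*)} \;\leq\; -\varphi v|\mathbf{A}|^2 \;\leq\; 0.
\]
The parabolic maximum principle then forces $\sup_{M_t}\eta(t,\cdot)$ to be non-increasing in $t$. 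Since $\varphi(t_1,\cdot)\leq 1$ on $\mathbf{B}(x_0,\varrho)$, the value at $t_1$ is bounded by $\sup_{M_{t_1}\cap\mathbf{B}(x_0,\varrho)} v(t_1,\cdot)$, which yields the claimed estimate after observing that $M_{t_1}\cap\mathbf{B}(x_0,\varrho)$ projects into $\mathbf{B}^{\mathbf{n}}(\hat{x}_0,\varrho)$.

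The chief technical obstacle is that near $\partial\mathbf{B}(x_0,\varrho(t))$ the quantity $v$ can in principle blow up (the hypothesis only guarantees $\nu_t\cdot\mathbf{e}_{\mathbf{n}+1}>0$ on the open ball), so the spacetime supremum of $\eta$ might escape to the parabolic boundary rather than being attained at an interior critical point. I would handle this by first running the argument on the shrunken cutoff $\varphi_{(t_1,x_0),(1-\epsilon)\varrho}$, whose support is compactly contained in the open region of validity where smoothness of the flow forces $v$ to be uniformly bounded and thus the maximum is genuinely interior, and then letting $\epsilon\to 0$ to recover the stated inequality.
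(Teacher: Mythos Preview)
The paper does not supply its own proof of this statement: Theorem~\ref{localgradthm} is quoted verbatim from Ecker--Huisken \cite[2.1]{eckerh1} and used as a black box, so there is nothing in the present paper to compare your argument against.

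That said, your proposal is correct and is essentially the original Ecker--Huisken proof. The key evolution identities you invoke,
\[
(\partial_t-\Delta_{M_t})v=-|\mathbf{A}|^2v-2v^{-1}|\nabla^{M_t}v|^2,
\qquad
(\partial_t-\Delta_{M_t})\varphi=0\ \text{on }\{\varphi>0\},
\]
are exactly those of \cite[Cor.~1.1(ii), proof of Thm.~2.1]{eckerh1}, and the cancellation of the two cross terms at an interior maximum of $v\varphi$ is precisely their mechanism. Your handling of the potential blow-up of $v$ at $\partial\mathbf{B}(x_0,\varrho(t))$ via the shrunken cutoff $\varphi_{(t_1,x_0),(1-\epsilon)\varrho}$ and a limit $\epsilon\to 0$ is a clean way to justify that the spacetime supremum is attained at an interior point; Ecker--Huisken phrase this slightly differently but the content is the same. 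One small remark: rather than only evaluating at a maximum, one can also rewrite the cross term globally as
\[
-2v^{-1}|\nabla v|^2\varphi-2\nabla v\cdot\nabla\varphi
=-2v^{-1}\nabla v\cdot\nabla(v\varphi),
\]
which puts the equation in the form $(\partial_t-\Delta_{M_t})\eta=-|\mathbf{A}|^2\eta-2v^{-1}\nabla v\cdot\nabla\eta$ and lets the weak maximum principle apply directly; this is equivalent to what you did.
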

%
%
%
%
\begin{thm}[{\cite[3.2(ii)]{eckerh1}}]
\label{localcurvethm}
There exists a $C\in (1,\infty)$ such that the following holds: 
Let $\varrho,\Gamma\in (0,\infty)$, $t_1\in\mathbb{R}$, $t_2\in (t_1,\infty]$,  $x_0\in\mathbb{R}^{\mathbf{n}+1}$
and let $(M_t)_{t\in [t_1,t_2)}$ 
be a \mcf\; in $\mathbf{C}(x_0,2\varrho,\Gamma)$.
Suppose there exists an
$f:(t_1,t_2)\times\mathbf{B}^{\mathbf{n}}(\hat{x}_0,2\varrho)\to\mathbb{R}$
such that
\begin{align*}
M_t\cap\mathbf{C}(x_0,2\varrho,\Gamma)
=
\mathrm{graph}(f(t,\cdot))
\end{align*}
for all $t\in (t_1,t_2)$.
Let $s_1\in (t_1,t_2)$.
Then
\begin{align*}
&\left|\mathbf{A}(M_s,(\hat{x},f(t,\hat{x}))\right|^2
\leq 
C\left((s-s_1)^{-1}+\varrho^{-2}\right)
\sup_{t\in [s_1,s]}\sup_{\mathbf{B}^{\mathbf{n}}(\hat{x}_0,2\varrho)}(1+|Df(t,\cdot)|^2)^2
\end{align*}
holds for all $x\in\mathbf{B}^{\mathbf{n}}(x_0,\varrho)$ for all $s\in (s_1,t_2)$.
\end{thm}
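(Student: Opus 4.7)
The plan is to follow the maximum-principle strategy from Ecker and Huisken, combining the evolution equations for $|A|^2$ and the graphical gradient function $v := (\nu_t \cdot \mathbf{e}_{\mathbf{n}+1})^{-1} = \sqrt{1+|Df|^2}$. First I would translate the statement into the moving frame on $M_t$ and use the standard evolution identities in co-dimension one: on the graphical piece one has $(\partial_t - \Delta_{M_t}) v \le -|\mathbf{A}|^2 v + \text{(gradient correction)}$ and $(\partial_t - \Delta_{M_t})|\mathbf{A}|^2 \le -2|\nabla \mathbf{A}|^2 + 2|\mathbf{A}|^4$. The point is that the bad $|\mathbf{A}|^4$ term in the second inequality should be absorbed against the $|\mathbf{A}|^2 v$ term in the first, after multiplying by an appropriate power of $v$.

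The second step is to introduce the test quantity
\[
G \;:=\; \frac{|\mathbf{A}|^2}{(1-\lambda\, v^2)^{2}}\cdot \eta^{2},
\]
where $\lambda$ is chosen small (in terms of the sup of $v$ on the cylinder, which by the graphical assumption equals $\sup(1+|Df|^2)$) so that $1-\lambda v^2 \geq \tfrac12$ on the region of interest, and where $\eta$ is a space-time cutoff on $\mathbf{C}(x_0,2\varrho,\Gamma)\times[s_1,s_2)$. The cutoff should equal $1$ on $\mathbf{B}^{\mathbf{n}}(\hat x_0,\varrho)\times\{s\}$, vanish near the lateral boundary of the cylinder with $|D\eta|\le C/\varrho$, and degenerate like $(t-s_1)$ at the initial time so as to produce the $(s-s_1)^{-1}$ factor in the estimate. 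The fact that the flow remains in the cylinder of height $\Gamma$ (guaranteed by the graphical representation together with Corollary \ref{heightboundcor}) lets me localize in the $\tilde x$ direction using the same kind of radial cutoff as in Proposition \ref{localisingprop}.

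Then I would compute $(\partial_t - \Delta_{M_t}) G$ using the two evolution equations above and the identities for $\eta$. After the usual Kato-type inequality $|\nabla|\mathbf{A}||^2 \le |\nabla\mathbf{A}|^2$ and a Cauchy--Schwarz absorption of the cross terms, the algebraic gain from differentiating $(1-\lambda v^2)^{-2}$ produces a good negative term $-c\lambda |\mathbf{A}|^4 v^2 (1-\lambda v^2)^{-3}$ which eats the bad $|\mathbf{A}|^4$. What survives is a parabolic inequality of the form $(\partial_t - \Delta_{M_t}) G \le C(\sup v)^4 (\varrho^{-2}+(s-s_1)^{-1})$ on the support of $\eta$, plus a term involving $|\eta_t|\le C/(s-s_1)$.

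Finally I would apply the parabolic maximum principle at a first interior maximum of $G$: at such a point $\Delta_{M_t} G \le 0$ and $\partial_t G \ge 0$, so the inequality forces $G \le C(\sup v)^4 (\varrho^{-2} + (s-s_1)^{-1})$ there, and hence everywhere in $\mathbf{B}^{\mathbf{n}}(\hat x_0,\varrho)$ at time $s$ by the choice of cutoff. Undoing the denominator $1-\lambda v^2$ (which is bounded below by a constant depending on $\sup v$) and recalling $v^2 = 1+|Df|^2$ gives exactly the claimed bound. The main obstacle I expect is the algebraic bookkeeping in the second paragraph: balancing $\lambda$ small enough to control $1-\lambda v^2$ from below yet large enough for the $|\mathbf{A}|^4$ absorption, while keeping track of how the final constant depends only on $\sup(1+|Df|^2)$ and not on higher derivatives.
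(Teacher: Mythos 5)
The paper does not actually prove this statement; it is imported directly from Ecker and Huisken \cite[Theorem 3.2(ii)]{eckerh1}, so there is no internal proof to compare against. Your outline correctly identifies the Ecker--Huisken strategy: couple the evolution equations for $v := (\nu\cdot\mathbf{e}_{\mathbf{n}+1})^{-1}$ and $|\mathbf{A}|^2$, form a weighted test quantity, and run a parabolic maximum-principle argument with a space-time cutoff degenerating linearly at the starting time $s_1$.

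However, your specific choice of weight fails, and the failure is exactly the tension you flag at the end. The absorption of the $|\mathbf{A}|^4$ term is an algebraic identity, not a Cauchy--Schwarz estimate, and it depends on the exact form of the weight. Writing $w:=v^2$ and $\psi$ for the weight, the $|\mathbf{A}|^4$ terms in $(\partial_t-\Delta_{M_t})\bigl(|\mathbf{A}|^2\psi(w)\bigr)$ combine to $2|\mathbf{A}|^4\bigl(\psi(w)-w\psi'(w)\bigr)$, so one needs $\psi(w)-w\psi'(w)\le 0$, i.e.\ that $\psi(w)/w$ be non-decreasing. The Ecker--Huisken weight $\psi(w)=w/(1-\lambda w)$ gives $\psi/w=(1-\lambda w)^{-1}$, increasing for any $\lambda\in(0,1/\sup w)$. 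Your weight $\psi(w)=(1-\lambda w)^{-2}$ gives
\begin{align*}
\psi(w)-w\psi'(w)=\frac{1-3\lambda w}{(1-\lambda w)^{3}},
\end{align*}
which is \emph{positive} whenever $\lambda w<1/3$. Since $w=1+|Df|^2\ge 1$, absorbing $|\mathbf{A}|^4$ requires $\lambda\ge 1/3$, while keeping $1-\lambda v^2\ge 1/2$ requires $\lambda\le 1/(2\sup w)$; these are simultaneously possible only when $\sup w\le 3/2$, i.e.\ $\sup|Df|^2\le 1/2$. For the general gradient range the theorem covers, your test quantity does not satisfy a good parabolic inequality. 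The missing $v^2$ factor in the numerator is essential; replacing $(1-\lambda v^2)^{-2}$ by $v^2/(1-\lambda v^2)$ repairs the argument, and the remainder of your outline (Kato's inequality, the $(t-s_1)$-degenerating cutoff, ball-shaped localization inside the cylinder) is sound. One small remark: since the flow is \emph{assumed} graphical in $\mathbf{C}(x_0,2\varrho,\Gamma)$, you do not need Corollary \ref{heightboundcor} to confine it; a spatial cutoff supported in that cylinder already suffices.
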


%
%
%
%
%
%
\subsection{Stay graphical for bounded gradient}
\label{stayinggraphical}
In this section we show Theorem \ref{stayinggraphicalthm},
which implies Theorem \ref{mainresult2}.
The main ingredient of the proof is Lemma \ref{maintaingraphlem},
which originally appears in the author's thesis \cite[12.11]{lahiri}.
This Lemma combines the curvature bound from Ecker and Huisken \cite[3.2(ii)]{eckerh1}
with Proposition \ref{smallcurvestaygraphlem}, 
to maintain the graphical representation of a \mcf\;
that has been graphical over a period of time.

%
%
%
%
\begin{thm}
\label{stayinggraphicalthm}
For every $L\in [1,\infty)$ there exists a $\Lambda\in (1,\infty)$ such that the following holds:
Let $\rho,\Gamma,\tau\in (0,\infty)$, $\delta\in (0,1]$,
$t_0\in\mathbb{R}$, $\hat{a}\in\mathbb{R}^{\mathbf{n}}$, $a:=(\hat{a},0)$
and let $(M_t)_{t\in [t_0,t_0+\tau)}$ be a \mcf\; in $\mathbf{C}(a,\rho,\Gamma)$.
Suppose there exists an
$f:\mathbf{B}^{\mathbf{n}}(\hat{a},\rho)\to\mathbb{R}$
with $\sup|f|\leq\Gamma-2\delta\rho$, $\sup |Df|\leq L$
and
\begin{align}
\label{stayinggraphicalthma}
M_{t_0}\cap\mathbf{C}(a,\rho,\Gamma)
= 
\mathrm{graph}(f).
\end{align}
Set $\sigma(t):=\Lambda\sqrt{t-t_0}$.
Then for all $t\in (t_0,t_0+\tau)$ such that $\sigma(t)<\delta\rho$
there exists an 
$g_t:\mathbf{B}^{\mathbf{n}}(\hat{a},\rho-\sigma(t))\to\mathbb{R}$
with $\sup|g_t|\leq\sup|f| + \sigma(t)$, $\sup |Dg_t|\leq 4L$ and
\begin{align}
\label{stayinggraphicalthmb}
M_{t_0}\cap\mathbf{C}(a,\rho-\sigma(t),\Gamma-\sigma(t))
= 
\mathrm{graph}(g_t).
\end{align}
\end{thm}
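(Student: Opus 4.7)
My plan is a bootstrap/continuity argument, following the outline sketched in the introduction for Theorem \ref{mainresult2}. Let $\Lambda=\Lambda(L,\mathbf{n})$ be chosen at the end, set $\sigma(t):=\Lambda\sqrt{t-t_0}$, and define
\[
s^* := \sup\bigl\{s\in[t_0,t_0+\tau): \sigma(s)<\delta\rho \text{ and } M_t\cap\mathbf{C}(a,\rho-\sigma(t),\Gamma-\sigma(t))=\mathrm{graph}(g_t) \text{ with } \sup|Dg_t|\leq 4L \text{ for all } t\in[t_0,s]\bigr\}.
\]
By smooth short-time continuity of the flow and of the graphical representation, $s^*>t_0$. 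It suffices to show that if $s^*<t_0+\tau$ and $\sigma(s^*)<\delta\rho$, we can extend the graphical representation strictly past $s^*$, contradicting maximality.

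On $[t_0,s^*)$ I would first apply Theorem \ref{localgradthm} (the Ecker--Huisken gradient estimate) inside every ball centered on the graphical part, using \eqref{hypertilt} to translate the bound on $v=(\nu\cdot\mathbf{e}_{\mathbf{n}+1})^{-1}$ into a bound on $|Dg_t|$. Starting from $\sup|Df|\leq L$, this upgrades the a priori bound $|Dg_t|\leq 4L$ to $|Dg_t|\leq 2L$ on the slightly smaller cylinder obtained by shrinking the radius by $C\sqrt{t-t_0}$. Thus the $4L$ gradient bound can never be saturated in the interior and the improved bound $2L$ persists up to $t=s^*$. Next I would apply Theorem \ref{localcurvethm} on this strictly graphical region to obtain a curvature estimate $|\mathbf{A}(M_t,x)|\leq c_L^{-1}(t-t_0)^{-1/2}$ on a further shrunk cylinder, where $c_L$ depends only on $L,\mathbf{n}$.

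The main step is to extend graphicality past $s^*$ by a covering argument at time $s$ just below $s^*$. The curvature bound allows one to cover the relevant portion of $M_s$ by balls of radius $r_0:=c_Ll_0\sqrt{s-t_0}$ (with $l_0$ from Theorem \ref{flatstaygraphthm}) such that in each ball $M_s$ is parametrizable over its tangent plane $\mathbf{T}(M_s,y)$ with gradient $\leq l_0$. In each rotated frame I would apply Proposition \ref{smallcurvestaygraphlem} (with $\kappa=\tfrac14$ fixed and $K$ controlled by $L,\mathbf{n}$ via the tilt between $\mathbf{T}(M_s,y)$ and $\mathbb{R}^{\mathbf{n}}\times\{0\}$), which maintains the local graphical representation for a further time $\sim c_L(s-t_0)$. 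Since the bound $|Dg_s|\leq 2L$ keeps $\mathbf{T}(M_t,y)\cdot\mathbf{e}_{\mathbf{n}+1}$ uniformly bounded away from zero, each local graph-over-tangent-plane can be re-expressed as a graph over $\mathbb{R}^{\mathbf{n}}\times\{0\}$. The usual patching argument, based on continuity of $\mu_t(\mathbf{C}(a,\cdot,\cdot))$ together with Corollary \ref{barrierlem} (cf.\ the proofs of Theorem \ref{flatstaygraphthm} and Proposition \ref{flatbecomegraphprop}), assembles these local pieces into a single graph $g_t$ on $\mathbf{B}^{\mathbf{n}}(\hat{a},\rho-\sigma(t))$ for $t\in[s,s+c_L(s-t_0)]$, with $\sup|Dg_t|\leq 4L$ since only $o(1)$ change from $2L$ can occur over such a short time. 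A two-sided height control $\sup|g_t|\leq\sup|f|+\sigma(t)$ follows from Corollary \ref{heightboundcor} applied on the doubly shrunk cylinder.

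The main obstacle is the bookkeeping of the various interior shrinkages. One must choose $\Lambda=\Lambda(L,\mathbf{n})$ sufficiently large so that the growth $\sigma(s+c_L(s-t_0))-\sigma(s)$ exceeds the sum of losses in cylinder radius coming from (i) Theorem \ref{localgradthm}, (ii) Theorem \ref{localcurvethm}, (iii) the covering-ball radius $r_0$, and (iv) the cylinder loss built into Proposition \ref{smallcurvestaygraphlem}. Since all these losses scale like $\sqrt{s-t_0}$, they are absorbed by $\sigma(t)=\Lambda\sqrt{t-t_0}$ for $\Lambda$ large enough depending only on $L$ and $\mathbf{n}$, and the extension contradicts the maximality of $s^*$. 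This strategy should be encapsulated in an iteration lemma (as mentioned in the section introduction, referring to \cite[12.11]{lahiri}) that is then invoked to close the argument.
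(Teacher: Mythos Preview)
Your proposal is correct and follows essentially the same route as the paper: the paper packages your ``main step'' (Theorem~\ref{localcurvethm} followed by Proposition~\ref{smallcurvestaygraphlem}, applied directly in the original frame with $\kappa=(2L_1)^{-2}$ rather than in a rotated frame) into the iteration Lemma~\ref{maintaingraphlem}, and then runs the same open--closed continuity argument you describe, with Theorem~\ref{localgradthm} used afterward to recover $\sup|Dg|\le 4L$. The one point where the paper is more careful is the claim $s^*>t_0$: since $\sigma(t_0)=0$ the cylinder does not initially shrink, so bare ``smooth short-time continuity'' does not immediately give graphicality on $\mathbf{C}(a,\rho-\sigma(t),\Gamma-\sigma(t))$ for $t$ near $t_0$; the paper handles this by running the whole argument on cylinders shrunk by an additional $\epsilon\rho$ (so that $I_\epsilon\neq\emptyset$ is genuinely a compact-containment continuity statement) and letting $\epsilon\to 0$ at the end.
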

%
%
\begin{rem}
\begin{enumerate}
\item
The curvature estimate by Ecker and Huisken 
\cite[3.2(ii)]{eckerh1}(see Theorem \ref{localcurvethm})
can be used to obtain bounds on the curvature and $|D^2g_t|$ (via \eqref{secderivativerema}).
\item
The bounds on $|g_t|$, $|Dg_t|$
may be improved if you are further inside the graphical cylinder.
Use Corollary \ref{heightboundcor}
or the gradient estimate by Ecker and Huisken \cite[2.1]{eckerh1}(see Theorem \ref{localgradthm}) respectively.
\item
For $a=0$, $t_0=0$, $\rho=\Gamma=2$ and $\delta=\frac{1}{4}$
this implies Theorem \ref{mainresult2}.
Here we use that for $t\in (0,\kappa_L]$ we have $\sigma(t)<\frac{1}{2}$,
for $\kappa_L<(2\Lambda)^{-2}$.
\item
See also Example \ref{shrinkingsquareexmp},
which shows,
that it is reasonably that the set where graphical representation is maintained shrinks in time.
\end{enumerate}
\end{rem}

%
%
%
%
The proof of Theorem \ref{stayinggraphicalthm} 
is based on the following Lemma
which is taken from \cite[12.11]{lahiri}.
Here we give a much shorter proof.
%
%
\begin{lem}[{\cite[12.11]{lahiri}}]
\label{maintaingraphlem}
For every $L_1\in [1,\infty)$ 
there exists a $\lambda_1\in (0,1)$ 
such that the following holds:
Let $\rho_1,R_1,\Gamma_1\in (0,\infty)$, 
$s_0\in\mathbb{R}$, 
and let $(M_t)_{t\in [s_0-R_1^2,s_0+\lambda_1^2R_1^2)}$ 
be a \mcf\; in $\mathbf{C}(0,\rho_1+R_1,\Gamma_1+R_1) $.
Set $J:=(s_0-R_1^2,s_0)$.
Suppose there exists an
$u:J\times\mathbf{B}^{\mathbf{n}}(0,\rho_1+R_1)\to\mathbb{R}$
with $\sup|u|<\Gamma_1 - R_1$, $\sup |Du|\leq L_1$
and
\begin{align}
\label{maintaingraphlema}
M_t\cap\mathbf{C}(0,\rho_1+R_1,\Gamma_1+R_1) 
=\mathrm{graph}(u(t,\cdot))
\end{align}
for all $t\in J$.
Set $I:=(s_0,s_0+\lambda_1^2R_1^2)$.
Then there exists a function
$v:I\times\mathbf{B}^{\mathbf{n}}(0,R_1)\to\mathbb{R}$
with $\sup|v|\leq\sup|u| + R_1$, $\sup |Dv|\leq 4L_1$ and 
\begin{align}
\label{maintaingraphlemb} 
M_t\cap\mathbf{C}(0,\rho_1,\Gamma_1) 
=\mathrm{graph}(v(t,\cdot))
\end{align}
for all $t\in I$.
\end{lem}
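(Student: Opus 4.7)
The plan is: at time $s_0$, deduce a pointwise curvature bound on $M_{s_0}$ from the Ecker-Huisken curvature estimate Theorem~\ref{localcurvethm} (taking advantage of the graphical period $J$), combine it with the gradient bound $L_1$ viewed as a tilt bound, then apply Proposition~\ref{smallcurvestaygraphlem} locally to extend graphical representation forward in time. The bounded-curvature preservation result Proposition~\ref{smallcurvestaygraphlem} is exactly tailored to this input, so once its hypotheses are verified the rest is patching and bookkeeping.

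Smoothness of the flow extends $u$ continuously to $t=s_0$, so $M_{s_0}\cap\mathbf{C}(0,\rho_1+R_1,\Gamma_1+R_1)$ is the graph of $u(s_0,\cdot)$ with $\sup|u(s_0,\cdot)|\leq\Gamma_1-R_1$ and $\sup|Du(s_0,\cdot)|\leq L_1$. Applying Theorem~\ref{localcurvethm} with $s_1:=s_0-R_1^2/2$ on the graphical interval $(s_0-R_1^2,s_0)$ and passing to the limit $s\to s_0^-$ yields
\[
  |\mathbf{A}(M_{s_0},x)|\leq C(1+L_1^2)R_1^{-1}=:KR_1^{-1}
\]
for every $x\in M_{s_0}\cap\mathbf{C}(0,\rho_1+R_1/2,\Gamma_1+R_1)$, while \eqref{hypertilt} turns $\sup|Du(s_0,\cdot)|\leq L_1$ into the tilt bound $\|\mathbf{T}(M_{s_0},\cdot)_\natural-(\mathbb{R}^{\mathbf{n}})_\natural\|\leq 1-(1+L_1^2)^{-1}$. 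These are exactly the curvature and tilt hypotheses of Proposition~\ref{smallcurvestaygraphlem} with $\kappa:=(2(1+L_1^2))^{-1}$.

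Fix $\varrho:=c_0R_1/L_1$ and $\Gamma:=c_1R_1$ with small universal constants $c_0,c_1$. For every $\hat{x}_0\in\mathbf{B}^{\mathbf{n}}(0,\rho_1)$, set $z_0:=(\hat{x}_0,u(s_0,\hat{x}_0))$; the inclusion $\mathbf{C}(z_0,2\varrho,\Gamma+\varrho)\subset\mathbf{C}(0,\rho_1+R_1,\Gamma_1+R_1)$ and the local height constraint $\sup_{\mathbf{B}^{\mathbf{n}}(\hat{z}_0,2\varrho)}|u(s_0,\cdot)-\tilde{z}_0|<\Gamma-\varrho/2$ follow from the Lipschitz bound on $u$. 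Proposition~\ref{smallcurvestaygraphlem}, shifted to start at $(z_0,s_0)$, then produces a local graphical representation $v_{z_0}$ on $\mathbf{C}(z_0,\varrho,\Gamma)$ for $t\in[s_0,s_0+\sigma_1\varrho^2)$ together with the propagated tilt bound $\|\mathbf{T}(M_t,\cdot)_\natural-(\mathbb{R}^{\mathbf{n}})_\natural\|\leq 1-\kappa$, where $\sigma_1=\sigma_1(L_1)$. Picking $\lambda_1=\lambda_1(L_1)$ with $\lambda_1^2R_1^2\leq\sigma_1\varrho^2$ guarantees all the $v_{z_0}$ share the common time interval $I$. A point $(\hat{x},y)\in M_t\cap\mathbf{C}(0,\rho_1,\Gamma_1)$ lies in $\mathbf{C}(z_0,\varrho,\Gamma)$ for $\hat{x}_0:=\hat{x}$ as soon as $|y-u(s_0,\hat{x})|<\Gamma$, which is ensured by the vertical drift bound from Corollary~\ref{heightboundcor} once $\lambda_1$ is taken small. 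Because Proposition~\ref{smallcurvestaygraphlem} asserts equality $M_t\cap\mathbf{C}(z_0,\varrho,\Gamma)=\mathrm{graph}(v_{z_0}(t,\cdot))$, setting $v(t,\hat{x}):=v_{z_0}(t,\hat{x})$ for this choice of $z_0$ yields a consistent function $v:I\times\mathbf{B}^{\mathbf{n}}(0,\rho_1)\to\mathbb{R}$ satisfying \eqref{maintaingraphlemb}; smoothness of $M_t$ makes $v$ smooth.

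The height bound $\sup|v|\leq\sup|u|+R_1$ is one more application of Corollary~\ref{heightboundcor}, while $\sup|Dv|\leq 4L_1$ reads off the propagated tilt bound via \eqref{hypertilt}: with $\kappa=(2(1+L_1^2))^{-1}$ one has $(\nu\cdot\mathbf{e}_{\mathbf{n}+1})^2\geq 1-(1-\kappa)^2\geq\kappa$, hence $|Dv|^2\leq\kappa^{-1}-1\leq 2(1+L_1^2)-1\leq 4L_1^2$ for $L_1\geq 1$. The main obstacle is coordinating the $L_1$-dependent scalings of $\varrho$, $\Gamma$, and $\lambda_1$ so the cylinders $\mathbf{C}(z_0,2\varrho,\Gamma+\varrho)$ fit inside the ambient flow domain, yet collectively cover $M_t\cap\mathbf{C}(0,\rho_1,\Gamma_1)$ throughout $I$: the Ecker-Huisken curvature radius forced at $s_0$ scales like $R_1/(1+L_1^2)$, and this is ultimately what makes $\lambda_1$ depend on $L_1$ in the same pattern as in the outline of Theorem~\ref{mainresult2}.
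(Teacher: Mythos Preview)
Your approach is essentially the same as the paper's: Ecker--Huisken curvature estimate (Theorem~\ref{localcurvethm}) on the graphical period $J$, tilt bound from $\sup|Du|\le L_1$ via \eqref{hypertilt}, then Proposition~\ref{smallcurvestaygraphlem} applied at each base point in $\mathbf{B}^{\mathbf{n}}(0,\rho_1)$, with $\lambda_1$ determined by the resulting $\sigma_1(L_1)$.

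Two implementation choices in the paper make the write-up shorter. First, the paper centers its local cylinders on the base plane, $z=(\hat z,0)$, and takes the full height $\Gamma=\Gamma_1$; the hypothesis $\sup|u|<\Gamma_1-R_1$ then directly feeds the height condition of Proposition~\ref{smallcurvestaygraphlem}, and the output cylinders $\mathbf{C}(z,\varrho,\Gamma_1)$ already cover $\mathbf{C}(0,\rho_1,\Gamma_1)$, so no separate drift argument via Corollary~\ref{heightboundcor} is needed. Second, the paper applies Proposition~\ref{smallcurvestaygraphlem} with starting time $t_0$ slightly \emph{before} $s_0$ (at $-\tfrac12\sigma_1$ after normalizing $s_0=0$), using that the Ecker--Huisken bound holds on the whole interval $(-1,0)$; this avoids the limit $s\to s_0^-$ and makes $I$ a genuine subinterval of the output interval. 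Your version with $z_0$ on the graph, short cylinders $\Gamma=c_1R_1$, and the drift bound is correct but slightly heavier; the final gradient bookkeeping $|Dv|^2\le\kappa^{-1}-1\le 4L_1^2$ matches the paper's (which takes $\kappa=(2L_1)^{-2}$).
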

%
%
\begin{proof}
We may assume $s_0=0$ and $R_1=4$.
Let $z\in\mathbf{B}^{\mathbf{n}}(0,\rho_1)\times\{0\}$ be arbitrary.
Note that $1+\sup|Du|^2\leq 2L_1^2$,
so \eqref{hypertilt} yields
\begin{align}
\label{maintaingraphlem11} 
\|\mathbf{T}(M_{t_0},x)_{\natural}
-\left(\mathbb{R}^{\mathbf{n}}\times\{0\}^{\mathbf{k}}\right)_{\natural}\|
&= 1-\frac{1}{1+|Du(t_0,x)|^2}
\leq 1-2(2L_1)^{-2}
\end{align}
for all $x\in M_t\cap\mathbf{C}(z,4,\Gamma_1+4)$
for all $t\in\left[-16,0\right]$.

Also by Theorem \ref{localcurvethm} 
with $t_2=0$, $t_1=-16$, $R=2$ and $\Gamma=\Gamma_1+4$
we obtain
\begin{align}
\label{maintaingraphlem21} 
&\left|\mathbf{A}(M_t,(\hat{x},f(t,\hat{x}))\right|
\leq 
CL_1^2=:K
\end{align}
for all $\hat{x}\in\mathbf{B}^{\mathbf{n}}(\hat{z},2)$ 
and all $t\in (-1,0)$.

In view of \eqref{maintaingraphlem11}, \eqref{maintaingraphlem21}
and \eqref{maintaingraphlema} we can apply Proposition \ref{smallcurvestaygraphlem}
with $t_0=-2^{-1}\sigma_0$, $\varrho=1$, $\Gamma=\Gamma_1$ and $\kappa=(2L_1)^{-2}$.
As $z\in\mathbf{B}^{\mathbf{n}}(0,\rho_1)\times\{0\}$
was arbitrary, this establishes the result.
Here we chose $\lambda_1^2\leq 2^{-1}\sigma_0$.
\end{proof}

%
%
\begin{proof}[Proof of Theorem \ref{stayinggraphicalthm}]
We may assume $t_0=0$ and $a=0$.
Let $\epsilon\in (0,2^{-2}\delta]$.
For $t\in [0,\tau)$ set
\begin{align*}
\rho_{\epsilon}(t)&:=\rho-\epsilon\rho-2^{-2}\Lambda\sqrt{t},
\\
\Gamma_{\epsilon}(t)&:=\Gamma-\epsilon\rho-2^{-2}\Lambda\sqrt{t},
\\
\gamma_{\epsilon}(t)&:=\sup|f|+\epsilon\rho+2^{-2}\Lambda\sqrt{t}.
\end{align*}
A time $t\in [0,\tau)$ is called proper,
if $\sigma(t)\geq\delta\rho$, or there exists a function
$f_{\epsilon,t}:\mathbf{B}^{\mathbf{n}}(0,\rho_{\epsilon}(t))\to\mathbb{R}$
such that
\begin{align}
\label{stayinggraphicalthm21}
M_t\cap\mathbf{C}(0,\rho_{\epsilon}(t),\Gamma_{\epsilon}(t))
=\mathrm{graph}(f_{\epsilon,t}),
\\
\label{stayinggraphicalthm22}
\sup|f_{\epsilon,t}|\leq\gamma_{\epsilon}(t),
\;\;\;
\sup |Df_{\epsilon,t}|\leq 4L.
\end{align}
Note that if $\sigma(t)<\delta\rho$, we have $\rho_{\epsilon}(t)>0$.
We consider the set
\begin{align*}
I_{\epsilon}:=\left\{s\in (0,\tau]:
\;t\;\text{is proper for all}\; t\in [0,s)\right\}.
\end{align*}
For $s\in I_{\epsilon}$ with $\sigma(s)<\delta\rho$
we have $\sqrt{s}<\Lambda^{-1}\delta\rho$.
Thus as $\epsilon\leq\frac{\delta}{4}$ and $\sup|f|\leq\Gamma-2\delta\rho$
we can estimate
\begin{align}
\label{stayinggraphicalthm32}
\gamma_{\epsilon}(s)
< \sup|f|+\frac{\delta\rho}{2}
\leq\Gamma-\frac{3\delta\rho}{2}
\leq\Gamma_{\epsilon}(s)-\Lambda\sqrt{s}
\end{align}
for all $s\in I_{\epsilon}$ with $\sigma(s)<\delta\rho$.

By continuity in time and 
$\mathbf{C}(0,\rho-\epsilon,\Gamma-\epsilon)\subset\subset\mathbf{C}(0,\rho,\Gamma)$ 
we have $I_{\epsilon}\neq\emptyset$.
Consider $s\in I_{\epsilon}$ with $s<\tau$.
We want to show, that there exists an $s_2\in (s,\tau]$ such that $(0,s_2]\subset I_{\epsilon}$.
If $\sigma(s)\geq\delta\rho$ we directly see $(0,\tau]=I_{\epsilon}$.
Thus assume $\sigma(s)< \delta\rho$.
Let $\lambda_1\in (0,1)$ be from Lemma \ref{maintaingraphlem},
chosen for $L_1=4L$.
Set
\begin{align*}
s_2:=\min\left\{(1+2^{-3}\lambda_1^2)s,\tau\right\},
\;\;\;
s_0:=s_2-2^{-2}\lambda_1^2s,
\end{align*}
in particular
$2^{-1}s<s_0<s<s_2<2s_0$
and $\sqrt{s}-\sqrt{s_0}\geq c\lambda_1^2\sqrt{s}$.
By definition of $\rho_{\epsilon}$, $\Gamma_{\epsilon}$ and $\gamma_{\epsilon}$ 
this yields
\begin{align}
\label{stayinggraphicalthm51}
\rho_{\epsilon}(s_0)-\sqrt{s}
&\geq \rho_{\epsilon}(s)+4\sqrt{\mathbf{n}s}
=:\rho_1,
\\
\label{stayinggraphicalthm52}
\Gamma_{\epsilon}(s_0)
&\geq \Gamma_{\epsilon}(s)+\sqrt{s},
\\
\label{stayinggraphicalthm53}
\gamma_{\epsilon}(s_0)
&\leq \gamma_{\epsilon}(s)-\sqrt{s},
\end{align}
where we chose $\Lambda$ large depending on $\lambda_1$.

By \eqref{stayinggraphicalthm22} we have $\sup|Df_{\epsilon,t}|\leq 4L$ for all $t\in[0,s_0]$.
Also, by \eqref{stayinggraphicalthm22}, \eqref{stayinggraphicalthm32} and \eqref{stayinggraphicalthm53}
we obtain
\begin{align}
\label{stayinggraphicalthm63}
\sup|f_{\epsilon,t}(\hat{x})|
\leq\sup_{t\in[0,s_0]}\gamma_{\epsilon}(t)
=\gamma_{\epsilon}(s_0)
\leq\gamma_{\epsilon}(s)-\sqrt{s}
<\Gamma_{\epsilon}(s)-\sqrt{s}.
\end{align}
Combining \eqref{stayinggraphicalthm21}
with \eqref{stayinggraphicalthm51}, \eqref{stayinggraphicalthm52} and \eqref{stayinggraphicalthm63} we have
\begin{align*}
M_t\cap\mathbf{C}(0,\rho_1+\sqrt{s},\Gamma_{\epsilon}(s)+\sqrt{s})
=\mathrm{graph}(f_{\epsilon,t})
\end{align*}
for all $t\in (0,s_0]$, where we consider the restrictions of $f_{\epsilon,t}$ 
to $\mathbf{B}^{\mathbf{n}}(\hat{y},\rho_1+\sqrt{s})$.
Then by Lemma \ref{maintaingraphlem} with $L_1=4L$, $R_1=2^{-1}\sqrt{s}$
and  $\Gamma_1=\Gamma_{\epsilon}(s)$
there exists a 
$g_{\epsilon}:(0,s_2)\times\mathbf{B}^{\mathbf{n}}(0,\rho_1)\to\mathbb{R}$
with
\begin{align}
\label{stayinggraphicalthm61}
M_t\cap\mathbf{C}(0,\rho_1,\Gamma_{\epsilon}(s))
=\mathrm{graph}(g_{\epsilon}(t,\cdot))
\end{align}
for all $t\in (0,s_2)$.
Moreover 
$\sup|g_{\epsilon}|\leq\gamma_{\epsilon}(s)$.

It remains to show that $\sup|Dg_{\epsilon}|\leq 4L$.
Let $t_2\in (0,s_2)$ 
and $\hat{x}_0\in\mathbf{B}^{\mathbf{n}}(0,\rho_{\epsilon}(s))$ 
be arbitrary.
Set $x_0=(\hat{x}_0,(g_{\epsilon}(t_2,\hat{x}_0))$ 
and $\varrho:=\rho_1-\rho_{\epsilon}(s)=4\sqrt{\mathbf{n}s}$.
Then by $\sup|g_{\epsilon}|\leq\gamma_{\epsilon}(s)$, \eqref{stayinggraphicalthm32}
and for $\Lambda$ large enough we have
\begin{align*}
\mathbf{B}(x_0,\varrho)\subset\mathbf{C}(0,\rho_1,\Gamma_{\epsilon}(s)).
\end{align*}
In view of \eqref{stayinggraphicalthm61}, \eqref{stayinggraphicalthma}, 
\eqref{hypergraphnormal} and $\sqrt{1+\sup|Df|^2}\leq 2L$
we can now use Theorem \ref{localgradthm} to obtain $|Dg_{\epsilon}(t_2,\hat{x}_0)|\leq 4L$.
Thus we have $s_2\in I_{\epsilon}$.
Note that $s_2$ is either $\tau$ or $(1+2^{-3}\lambda_1^2)s$,
so the amount by which we enlarge the time interval is actually increasing with $s$.
As $s\in I_{\epsilon}$ was arbitrary and $I_{\epsilon}\neq\emptyset$
this yields $\tau\in I_{\epsilon}$.
Note that
\begin{align*}
\rho_{\epsilon}(t)\geq\rho-\sigma(t)-\epsilon\rho,
\;\;\;
\Gamma_{\epsilon}(t)\geq\Gamma-\sigma(t)-\epsilon\rho,
\;\;\;
\gamma_{\epsilon}(t)\leq \sup|f|+\sigma(t)+\epsilon\rho.
\end{align*}
As $\epsilon$ can be chosen arbitrary small we established the result.
\end{proof}

%
%
%
%
Additionally assuming a height bound in Theorem \ref{stayinggraphicalthm}, 
yields that the graphical representation will have small gradient after some time.
%
%
\begin{lem}
\label{flatgraphlem}
For every $L_0\in [1,\infty)$ there exist 
$\Lambda_1\in (1,\infty)$ and $\gamma_1\in (0,1)$ 
such that the following holds:
Let $\gamma\in (0,\gamma_1]$, $\rho_0\in (0,\infty)$,
$t_1\in\mathbb{R}$, 
$\hat{z}_0\in\mathbb{R}^{\mathbf{n}}$,
$z_0:=(\hat{z}_0,0)$,
$s_1:=t_1+\gamma^4\rho_0^2$, $t_2\in (s_1,\infty)$
and let $(M_t)_{t\in [t_1,t_2)}$ be a \mcf\; in $\mathbf{C}(z_0,4\rho_0,4\rho_0)$.
Suppose there exists an
$u:\mathbf{B}^{\mathbf{n}}(\hat{z}_0,4\rho_0)\to\mathbb{R}$ 
with $\sup |Du|\leq L_0$, $\sup|u|\leq\gamma^4\rho_0$
and
\begin{align}
\label{flatgraphlema} 
M_{t_1}\cap\mathbf{C}(z_0,4\rho_0,4\rho_0)
=\mathrm{graph}(u).
\end{align}
Then there exists a 
$v:\mathbf{B}^{\mathbf{n}}(\hat{z}_0,\rho_0)\to\mathbb{R}$
with
\begin{align}
\label{flatgraphlemb} 
M_{s_1}\cap\mathbf{C}(z_0,\rho_0,\rho_0)
=\mathrm{graph}(v)
\end{align}
and
\begin{align}
\label{flatgraphlemc}
\sup|v|\leq \Lambda_1\gamma^4\rho_0,
\;\;\;
\sup|Dv|\leq \Lambda_1\gamma,
\;\;\;
\sup|D^2v|\leq \Lambda_1\gamma^{-2}\rho_0^{-1}.
\end{align}
\end{lem}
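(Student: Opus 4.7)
The plan is to combine three ingredients to obtain a graphical representation of $M_{s_1}$ with very small gradient: the graphical stability of Theorem~\ref{stayinggraphicalthm}, the height control of Corollary~\ref{heightboundcor}, and the Ecker--Huisken interior curvature estimate of Theorem~\ref{localcurvethm}. The narrow slab height $\gamma^4\rho_0$, combined with a curvature bound of order $\gamma^{-2}\rho_0^{-1}$, will force the tangent planes of $M_{s_1}$ to be almost horizontal, which is the geometric mechanism producing the small gradient in \eqref{flatgraphlemc}.

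I would first apply Theorem~\ref{stayinggraphicalthm} on $\mathbf{C}(z_0,4\rho_0,4\rho_0)$ with $L=L_0$. Since $\sigma(s_1)=\Lambda\gamma^2\rho_0\le\rho_0$ for $\gamma_1$ small, $M_t$ remains the graph of some $g_t$ on $\mathbf{B}^{\mathbf{n}}(\hat z_0,3\rho_0)$ with $\sup|Dg_t|\le 4L_0$ for all $t\in[t_1,s_1]$. Simultaneously Corollary~\ref{heightboundcor}, applied with $r_0=\gamma^4\rho_0$ and $R$ of order $\rho_0$, keeps the flow inside a slab of height $C\gamma^4\rho_0$ in the relevant inner cylinder, which already yields the first inequality of \eqref{flatgraphlemc}. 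Feeding this graphical representation into Theorem~\ref{localcurvethm} on the time interval $(t_1,s_1)$ with a shifted intermediate $s_1'=t_1+\tfrac12\gamma^4\rho_0^2$ gives the curvature bound $|\mathbf{A}(M_{s_1},y)|\le C\gamma^{-2}\rho_0^{-1}$ (constant depending on $L_0$) for $y$ in the graphical region; via \eqref{secderivativerema} this provides the desired $|D^2v|$ bound as soon as the gradient has been controlled.

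The main obstacle is the middle inequality of \eqref{flatgraphlemc}: promoting $\sup|Dg_{s_1}|\le 4L_0$ to $\sup|Dg_{s_1}|\le\Lambda_1\gamma$. For this I would fix $y\in M_{s_1}\cap\mathbf{C}(z_0,\rho_0,\rho_0)$ with tangent plane $T$ and unit normal $\nu$, and combine the curvature bound with the local parametrisation over tangent planes (Corollary~\ref{flatparacor}) to represent $M_{s_1}$ near $y$ as the graph of a function $h$ over $T$ on a ball of radius $r:=\gamma^3\rho_0\le\alpha_0/|\mathbf{A}|$, with $|h|\le C|\mathbf{A}|r^2$. Letting $\theta$ be the angle between $\nu$ and $\mathbf{e}_{\mathbf{n}+1}$, moving from $y$ a distance $r$ inside $T$ in the direction maximising the vertical component reaches a surface point at height $\tilde y+r\sin\theta+O(|\mathbf{A}|r^2)$. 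The slab bound from the height step forces $r\sin\theta\le C\gamma^4\rho_0+C|\mathbf{A}|r^2$, and inserting $r=\gamma^3\rho_0$ together with $|\mathbf{A}|\le C\gamma^{-2}\rho_0^{-1}$ yields $\sin\theta\le C\gamma$. By \eqref{tiltderivativerema} and \eqref{hypertilt} this gives $|Dg_{s_1}(\hat y)|\le C\gamma$. Setting $v:=g_{s_1}|_{\mathbf{B}^{\mathbf{n}}(\hat z_0,\rho_0)}$ and choosing $\Lambda_1$ large enough to absorb all constants completes the argument.
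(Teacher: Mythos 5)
Your proposal is correct and follows essentially the same route as the paper: Theorem~\ref{stayinggraphicalthm} keeps $M_t$ graphical with gradient $\le 4L_0$ past time $s_1$, Corollary~\ref{heightboundcor} confines $M_{s_1}$ to a slab of height $C\gamma^4\rho_0$, Theorem~\ref{localcurvethm} gives $|\mathbf{A}(M_{s_1},\cdot)|\le C(L_0)\gamma^{-2}\rho_0^{-1}$, and the slab-plus-curvature combination forces the tangent planes to be nearly horizontal. The only difference is that your final tilt estimate (parametrize over the tangent plane via Corollary~\ref{flatparacor} at scale $r\sim\gamma^3\rho_0$, then bound $\sin\theta$ by the slab height) is exactly the content of Lemma~\ref{multigraphlem}, which the paper invokes directly with $r=\rho_0$, $\xi=C\gamma^2$, $K=CL_0^2\gamma^{-1}$; citing that lemma would shorten the argument, but your in-place derivation is sound.
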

%
%
%
%
%
\begin{proof}
We may assume $t_1=0$, $z_0=0$ and $\rho_0=1$.
Let $\Lambda$ be from Theorem \ref{stayinggraphicalthm}, chosen for $L=L_0$.
Then we can estimate
$\Lambda\sqrt{2s_1}\leq\frac{1}{4}$              
and $\sup|u|\leq\frac{1}{2}$,
where we used $\gamma\leq\gamma_1$
and chose $\gamma_1$ small depending on $\Lambda$.
Hence we can use Theorem \ref{stayinggraphicalthm} 
with $\rho=\Gamma=4$, $a=0$, $t_0=0$, $\tau=t_2$, $L=L_0$ 
and $\delta=\frac{1}{4}$.
This yields a function 
$g:[0,2s_1]\times\mathbf{B}^{\mathbf{n}}(0,3)\to\mathbb{R}$
with $\sup|g(\hat{x})|<\rho_0$,
\begin{align}
\label{flatgraphlem11} 
M_{t}\cap\mathbf{C}(0,3,3) 
=\mathrm{graph}(g(t,\cdot))
\end{align}
for all $t\in [0,2s_1]$.
Now we can apply Theorem \ref{localcurvethm}
with $\varrho=1$, $\Gamma=3$ and arbitrary $x_0\in \mathbf{B}^{\mathbf{n}}(0,2)\times\{0\}$,
to obtain
\begin{align}
\label{flatgraphlem12} 
\left|\mathbf{A}(M_{s_1},x)\right|
\leq CL_0^4 s_1^{-\frac{1}{2}}
=CL_0^4\gamma^{-2}
\end{align}
for all $x\in M_{s_1}\cap\mathbf{C}(0,2,3)$.

As $\sup|u|\leq\gamma^4$, Corollary \ref{heightboundcor} 
with $r_0=\gamma^4$, $R=2$ and $x_0=0$ implies
\begin{align}
\label{flatgraphlem21}
M_{t}\cap\mathbf{C}(0,2,2)
\subset\mathbf{C}(0,2,C\gamma^4)
\end{align}
for all $t\in [t_1,s_1]$.

Let $v$ be the restriction of $g(s_1,\cdot)$ to $\mathbf{B}^{\mathbf{n}}(0,1)$.
In particular \eqref{flatgraphlem11} implies \eqref{flatgraphlemb}.
In view \eqref{flatgraphlem12} and \eqref{flatgraphlem21}   
we can apply Lemma \ref{multigraphlem} 
with $r=1$, $\xi=C\gamma^2$ and $K=CL_0^2\gamma^{-1}$
to see that \eqref{flatgraphlemc} holds.
Here we estimated $K\xi\leq CL_0^2\gamma\leq\alpha_1$.
Note that by \eqref{flatgraphlem11} we have that $m_0=1$,
so the function from \eqref{multigraphlemd} has to coincide with $v$.
\end{proof}

In the following example we construct a \mcf\; (actually a curve shortening flow) 
that initially intersects the cylinder $(-2,2)\times (-2,2)$ in the straight line $(-2,2)\times\{0\}$,
but becomes non-graphical inside that cylinder immediately
and also becomes non-graphical inside the cylinder of half the radius after finite time.

\begin{exmp}
\label{shrinkingsquareexmp}
Consider $\epsilon\in (0,4^{-1})$, $\mathbf{S}^1:=\partial\mathbf{B}^2(0,1)$
and $A\subset\mathbb{R}^2$ with
\begin{align*}
[-2,2]\times[0,2]\subset A\subset [-(2+\epsilon),2+\epsilon]\times [0,2+\epsilon]
\end{align*}
and such that $\partial A=\Psi_0[\mathbf{S}^1]$ for a smooth embedding $\Psi_0$.
In particular $M_0:=\partial A$ is graphical inside $(-2,2)\times (-2,2)$
with gradient zero.
By the work of Grayson \cite{grayson}
there exists a unique mean curvature flow $(M_t)_{t\in [0,T)}$
starting from $M_0$ and shrinking to a round point for $t\nearrow T$.
By choice of $A$ 
and Brakke`s sphere comparison results \cite[3.7, 3.9]{brakke} (see also \cite[3.3]{eckerb})
we have
\begin{align}
\label{shrinkingsquareexmp11}
\mathbf{B}^2((0,1),r(t))\cap M_t=\emptyset,
\;\;\; 
M_t\subset \mathbf{B}^2((0,1),R(t))
\end{align}
for all $t\in [0,T)$, where $R(t):=\sqrt{3+3\epsilon-2t}$ and $r(t):=\sqrt{1-2t}$.

In particular $\frac{1}{2}\leq T\leq\frac{3+3\epsilon}{2}\leq 2$.
Statement \eqref{shrinkingsquareexmp11} also yields that $M_{2\epsilon}\subset\mathbf{B}^2((0,1),\sqrt{3-\epsilon})$. 
Then $M_{2\epsilon}$ cannot be graphical inside $(-2,2)\times (-2,2)$.
Moreover as $(M_t)$ shrinks to a round point,
there exists an $s\in (0,T)$ such that $M_s\subset\mathbf{B}^2(a,\epsilon)$
for some $a\in\mathbb{R}^2$.
Then $s< 2$ and  for any $h\in (0,2]$
the manifold $M_{s}$ cannot be graphical inside $(-2\epsilon,2\epsilon)\times (-h,h)$.
\end{exmp}

%
%
%
%
%
%
\subsection{Become graphical}
\label{becomegraph}
In this section we show Theorem \ref{becominggraphthm},
which implies Theorem \ref{mainresult4}.
The idea is that for an initially almost graphical \mcf\; that lies in a slab,
the gradient on the shrinking graphical part is decreasing (by Lemma \ref{flatgraphlem})
and the measure of the growing non-graphical part can be controlled.
It turns out that there is a time when both the gradient and this measure 
are small enough to apply Proposition \ref{flatbecomegraphprop},
which yields a graphical representation.
%
%
%
%
\begin{thm}
\label{becominggraphthm}
There exist constants $C\in (1,\infty)$ and $\sigma_0\in (0,1)$ and
for every $L\in [1,\infty)$ 
there exist $\Lambda_0\in (1,\infty)$ and $\lambda_0\in (0,1)$ 
such that the following holds:
Let $\lambda\in (0,\lambda_0]$, $\rho\in (0,\infty)$, $h,t_1\in\mathbb{R}$, 
$t_0:=t_1+\Lambda_0\lambda^2\rho^2$, $t_2\in (t_0,\infty)$,
$a=(\hat{a},\tilde{a})\in\mathbb{R}^{\mathbf{n}+1}$
and let $(M_t)_{t\in [t_1,t_2)}$ be a \mcf\; in $\mathbf{C}(a,2\rho,2\rho)$.
Assume
\begin{align}
\label{becominggraphthma} 
M_{t_1}\cap\mathbf{C}(a,2\rho,2\rho)
\subset\mathbf{C}(a,2\rho,\lambda^{2\mathbf{n}}\rho).
\end{align}
Suppose there exist an open subset $D^{\mathbf{n}}\subset\mathbf{B}^{\mathbf{n}}(\hat{a},2\rho)$
and an
$f:D^{\mathbf{n}}\to\mathbb{R}$
with $\sup |Df|\leq L$
such that
\begin{align}
\label{becominggraphthmb} 
M_{t_1}\cap\left(D^{\mathbf{n}}\times\mathbf{B}^{1}(\tilde{a},\rho)\right)
=\mathrm{graph}(f),
\end{align}
and for $E^{\mathbf{n}}:=\mathbf{B}^{\mathbf{n}}(\hat{a},2\rho)\setminus D^{\mathbf{n}}$
\begin{align}
\label{becominggraphthmc1} 
E^{\mathbf{n}}\subset\left(\mathbb{R}^{\mathbf{n}-1}\times\hball{h}{\lambda^{\mathbf{n}}\rho}\right),
\\
\label{becominggraphthmc2} 
\mathscr{H}^{\mathbf{n}}\left(M_{t_1}\cap\left(E^{\mathbf{n}}\times\mathbf{B}^{1}(\tilde{a},\rho)\right)\right)
\leq\lambda^{\mathbf{n}}\rho^{\mathbf{n}}.
\end{align}
Set $I:=[t_0,t_0+\sigma_0\rho^2)\cap [t_0,t_2)$.
Then there exists a 
$g:I\times\mathbf{B}^{\mathbf{n}}(\hat{a},\rho)\to\mathbb{R}$
with
\begin{align}
\label{becominggraphthmd} 
M_{t}\cap\mathbf{C}(a,\rho,\rho)
=\mathrm{graph}(g(t,\cdot))
\end{align}
and
\begin{align}
\begin{split}
\label{becominggraphthme}
\sup|g(t,\cdot)-\tilde{a}|&\leq C\rho_0^{-1}(t-t_1),
\\
\sup|Dg(t,\cdot)|&\leq C\sqrt[4]{\rho_0^{-2}(t-t_1)},
\\
\sup|D^2g(t,\cdot)|&\leq C(t-t_1)^{-\frac{1}{2}}
\end{split}
\end{align}
for all $t\in I$.
\end{thm}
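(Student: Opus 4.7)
The plan is to first reduce the gradient on the graphical part via Lemma \ref{flatgraphlem} over a short time, and then invoke Proposition \ref{flatbecomegraphprop} at a cover of centers. Normalize to $\rho=1$, $t_1=0$, $a=0$. Fix $\gamma:=\beta_0/(2\Lambda_1)$, with $\Lambda_1=\Lambda_1(L,\mathbf{n})$, choose a small constant $c_1=c_1(L,\mathbf{n})$, and set $\rho_0:=c_1\lambda^{\mathbf{n}}$ and $s_1:=\gamma^4\rho_0^2$. For each $\hat{y}\in D^{\mathbf{n}}$ with $\mathbf{B}^{\mathbf{n}}(\hat{y},4\rho_0)\subset D^{\mathbf{n}}$, the slab bound \eqref{becominggraphthma} yields $\sup|f|\leq\lambda^{2\mathbf{n}}\leq\gamma^4\rho_0$, so Lemma \ref{flatgraphlem} applies and produces a graphical representation of $M_{s_1}$ over $\mathbf{B}^{\mathbf{n}}(\hat{y},\rho_0)$ with gradient $\leq\Lambda_1\gamma=\beta_0/2$. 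Patching these yields a graph $\tilde{f}$ of $M_{s_1}$ over $\tilde{D}:=\mathbf{B}^{\mathbf{n}}(0,3/2)\setminus\tilde{E}^{\mathbf{n}}$, where $\tilde{E}^{\mathbf{n}}$ is the $4\rho_0$-thickening of $E^{\mathbf{n}}$ and so lies in $\mathbb{R}^{\mathbf{n}-1}\times\hball{h}{C\lambda^{\mathbf{n}}}$. Corollary \ref{heightboundcor}, applied locally, further confirms that $M_{s_1}$ sits in a slab of height $C\lambda^{2\mathbf{n}}$ inside each cylinder $\mathbf{C}(z_i,1/2,1/2)$ used below.

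The second step is to bound $\mathscr{H}^{\mathbf{n}}\bigl(M_{s_1}\cap S_0^i\bigr)$, where $S_0^i:=(\mathbf{B}^{\mathbf{n}}(\hat{z}_i,1/2)\setminus\tilde{D})\times\chball{1/8}$ is the non-graphical slice for the center $z_i$ introduced below. Apply Proposition \ref{localisingprop} at $y_0:=z_i$ with $\varrho:=1$ and the smoothed concave tent $\eta(\hat{x}):=(2C\lambda^{\mathbf{n}}-|(\hat{x})_{\mathbf{n}}-h|)_+$, which satisfies $\mathrm{div}_{M_t}D\eta\leq 0$ (concavity in the slab direction) and $|\nabla^{M_t}\eta|\leq 1$. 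Using \eqref{becominggraphthmc2} and the Lipschitz graph of $f$ over the slab-shaped support of $\eta$,
\begin{equation*}
\int\Upsilon^3(0,\cdot)\,d\mu_0 \;\leq\; (2C\lambda^{\mathbf{n}})^3\bigl(\lambda^{\mathbf{n}}+\sqrt{1+L^2}\cdot C\lambda^{\mathbf{n}}\bigr) \;\leq\; C(1+L)\lambda^{4\mathbf{n}}.
\end{equation*}
On $M_{s_1}\cap S_0^i$ one has $|x-z_i|^2\leq 17/64$, $\eta\gtrsim\lambda^{\mathbf{n}}$ by choice of $c_1$, and the drift $(2\mathbf{n}+1)s_1=O(\lambda^{2\mathbf{n}})$ is negligible, hence $\Upsilon(s_1,\cdot)\gtrsim\lambda^{\mathbf{n}}$ and the monotonicity \eqref{localisingpropc} gives $\mathscr{H}^{\mathbf{n}}(M_{s_1}\cap S_0^i)\leq C(1+L)\lambda^{\mathbf{n}}$.

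For the conclusion, cover $\mathbf{B}^{\mathbf{n}}(0,1)$ with finitely many balls $\mathbf{B}^{\mathbf{n}}(\hat{z}_i,1/8)$, $\hat{z}_i\in\mathbf{B}^{\mathbf{n}}(0,1)$, and apply Proposition \ref{flatbecomegraphprop} at each $z_i=(\hat{z}_i,0)$ with $\varrho:=1/8$, starting time $s_1$, and $\beta:=C_L\lambda$, where $C_L=C_L(L,\mathbf{n})$ is large enough that both $\beta^2\varrho\geq C\lambda^{2\mathbf{n}}$ and $\beta_0\beta^{\mathbf{n}}\varrho^{\mathbf{n}}\geq C(1+L)\lambda^{\mathbf{n}}$ hold for $\lambda\leq\lambda_0(L,\mathbf{n})$. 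The graphical hypothesis comes from paragraph 1, the measure hypothesis from paragraph 2, and the large-ball alternative \eqref{flatbecomegraphpropb4} of Remark \ref{flatbecomegraphrem}.2 holds because the slab containing $\tilde{E}^{\mathbf{n}}$ has width $O(\lambda^{\mathbf{n}})\ll\sqrt{\beta}\varrho=O(\sqrt{\lambda})$, so a ball of radius $\sqrt{\beta}\varrho$ fits inside $\tilde{D}\cap\mathbf{B}^{\mathbf{n}}(\hat{z}_i,1/8)$. Choosing $\Lambda_0:=C_L^2/64+1$ and $\sigma_0:=\beta_0^2/128$ ensures $t_0=\Lambda_0\lambda^2\geq s_0:=s_1+\beta^2\varrho^2$ and $[t_0,t_0+\sigma_0)\subset[s_0,s_0+\beta_0^2\varrho^2)$, so Proposition \ref{flatbecomegraphprop} yields graphs on each $\mathbf{B}^{\mathbf{n}}(\hat{z}_i,1/8)$ for $t\in I$ that glue to the global $g$ by uniqueness of the graphical representation, and the estimates \eqref{flatbecomegraphpropd} translate directly into \eqref{becominggraphthme}.

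The main obstacle is the measure bound of paragraph 2: it is what forces $\rho_0$ to be of order $\lambda^{\mathbf{n}}$ rather than a $\lambda$-independent constant, since any larger $\rho_0$ would thicken $E^{\mathbf{n}}$ into a slab whose graphical contribution $L\cdot\mathscr{L}^{\mathbf{n}}(\tilde{E}^{\mathbf{n}})$ would exceed the prescribed $\beta_0\beta^{\mathbf{n}}\varrho^{\mathbf{n}}$. The resulting intermediate time $s_1\sim\lambda^{2\mathbf{n}}$ still fits comfortably inside the allowed $\Lambda_0\lambda^2$, leaving room for the additional $\beta^2\varrho^2\sim\lambda^2$ wait of Proposition \ref{flatbecomegraphprop}.
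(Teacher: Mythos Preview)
Your overall strategy matches the paper's proof closely: (1) apply Lemma~\ref{flatgraphlem} on balls of radius $\sim\lambda^{\mathbf{n}}$ to reduce the gradient on the graphical part at an intermediate time $s_1$; (2) propagate a measure bound on the non-graphical strip from $t_1$ to $s_1$ via Proposition~\ref{localisingprop}; (3) cover and apply Proposition~\ref{flatbecomegraphprop} using Remark~\ref{flatbecomegraphrem}.2. Steps (1) and (3) are carried out essentially as in the paper (the paper uses $\gamma=4\lambda^{\mathbf{n}/4}$ rather than a fixed $\gamma$, giving $s_1=\lambda^{3\mathbf{n}}$ instead of your $s_1\sim\lambda^{2\mathbf{n}}$, but either choice works).

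Step (2), however, contains a genuine error. Proposition~\ref{localisingprop} requires $(\partial_t-\mathrm{div}_{M_t}D)\eta\leq 0$ on $\{\eta>0\}$. Your tent $\eta(x)=(2C\lambda^{\mathbf{n}}-|x_{\mathbf{n}}-h|)_+$ is time-independent and concave, so indeed $\mathrm{div}_{M_t}D\eta\leq 0$ as you say, but this gives $(\partial_t-\mathrm{div}_{M_t}D)\eta=-\mathrm{div}_{M_t}D\eta\geq 0$, the \emph{wrong} inequality. Smoothing the peak only introduces strict concavity and makes this worse; and without smoothing, $\eta\notin\mathcal{C}^2(\{\eta>0\})$, so the proposition does not even apply. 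The difficulty is structural: you need $\eta$ to be large on the thin strip and small off it, which forces concavity, yet a time-independent concave $\eta$ can never be a subsolution. The paper resolves this by inserting a time drift: it takes
\[
\eta(t,x)=\bigl(1-(2C_1\lambda^{\mathbf{n}})^{-2}(|Q(x)|^2+2\mathbf{n}t)\bigr)_+,\qquad Q(x)=(x_{\mathbf{n}}-h,\,x_{\mathbf{n}+1}),
\]
for which $\partial_t\eta$ absorbs the concavity because $\mathrm{div}_{M_t}D|Q|^2\leq 2\mathbf{n}$. A one-dimensional version $\eta(t,x)=(1-a^{-2}((x_{\mathbf{n}}-h)^2+2t))_+$ with $a=2C\lambda^{\mathbf{n}}$ would serve equally well, since then $(\partial_t-\mathrm{div}_{M_t}D)\eta=2a^{-2}(|(\mathbf{T}_xM_t)_{\natural}\mathbf{e}_{\mathbf{n}}|^2-1)\leq 0$. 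With either choice the drift term $(2\mathbf{n}+L\varrho)s_1$ in $\Upsilon$ is still $O(\lambda^{2\mathbf{n}}\cdot\lambda^{-2\mathbf{n}}\cdot s_1)=O(\lambda^{\mathbf{n}})$ or smaller, so your lower bound $\Upsilon(s_1,\cdot)\gtrsim\lambda^{\mathbf{n}}$ on $S_0^i$ survives, and your arithmetic leading to $\mathscr{H}^{\mathbf{n}}(M_{s_1}\cap S_0^i)\leq C_L\lambda^{\mathbf{n}}$ goes through unchanged. After this correction the rest of your plan is sound.
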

%
%
\begin{rem}
Using this Theorem with $a=0$, $\rho=1$, $h=0$, $t_1=0$, $t_2=\tau$ and $\lambda^2=\Lambda_0^{-1}\epsilon$
implies Theorem \ref{mainresult4} with $\gamma_2=(\Lambda_0^{-1}\epsilon)^{\mathbf{n}}$ and $\kappa_2:=c\sigma_0$.
\end{rem}
%
%
\begin{proof}
We may assume $a=0$, $t_1=0$ and $\rho=4$.
Set
\begin{align*}
\rho_1&:=2^{-4}\lambda^{\mathbf{n}},
\;\;\;
B^1:=\mathbf{B}^1(0,1)=(-1,1),
\\
D_1^{\mathbf{n}}&:=\mathbf{B}^{\mathbf{n}}(0,5)
\setminus (\mathbb{R}^{\mathbf{n}-1}\times\overline{\mathbf{B}^1(h,8\lambda^{\mathbf{n}})})
\\
E_1^{\mathbf{n}}&:=\mathbf{B}^{\mathbf{n}}(0,5)
\cap (\mathbb{R}^{\mathbf{n}-1}\times\overline{\mathbf{B}^1(h,8\lambda^{\mathbf{n}})}).
\end{align*}
Consider $z=(\hat{z},0)\in D_1^{\mathbf{n}}\times\{0\}$.
Then by \eqref{becominggraphthmc1} and $\lambda\leq\lambda_0\leq 1$
we have
\begin{align*} 
\mathbf{C}(z,4\rho_1,4\rho_1))\subset D^{\mathbf{n}}\times B^1.
\end{align*}
In view of \eqref{becominggraphthma} and \eqref{becominggraphthmb}
we can use Lemma \ref{flatgraphlem}
with $L_0=L$, $z_0=z$, $\rho_0=\rho_1$, $\gamma=4\lambda^{\frac{\mathbf{n}}{4}}$.
Here we have to choose $\lambda_0$ small enough such that $4\lambda^{\frac{\mathbf{n}}{4}}\leq\gamma_0$,
which depends on $L$.
Set
\begin{align*}
s_1:=\gamma^4\rho_1^2=\lambda^{3\mathbf{n}}.
\end{align*}
Lemma \ref{flatgraphlem} yields a 
$v_{z}:\mathbf{B}^{\mathbf{n}}(\hat{z},\rho_1)\to\mathbb{R}$
with $\sup|Dv_{z}|\leq 4\Lambda_1\lambda^{\frac{\mathbf{n}}{4}}$ and
\begin{align*}
M_{s_1}\cap\mathbf{C}(z,\rho_1,\rho_1)
=\mathrm{graph}(v_{z}).
\end{align*}
Thus, as $\hat{z}\in D_1^{\mathbf{n}}$ was arbitrary,
we can combine the $v_{z}$ to obtain a
$v_2:D_1^{\mathbf{n}}\to\mathbb{R}$
with 
\begin{align}
\label{becominggraphthm31} 
M_{s_1}\cap\left(D_1^{\mathbf{n}}\times\chball{\rho_1}\right)
=\mathrm{graph}(v_2)
\end{align}
and
\begin{align}
\label{becominggraphthm32} 
\sup|Dv_2|\leq 4\Lambda_1\lambda^{\frac{\mathbf{n}}{4}}.
\end{align}

By \eqref{becominggraphthma} we can use Corollary \ref{heightboundcor} 
with $R=1$ and $x_0\in\mathbf{B}^{\mathbf{n}}(0,6)\times\{0\}$
as well as $R=4$ and $x_0=0$,
to obtain
\begin{align}
\label{becominggraphthm21}
M_{t}\cap\mathbf{C}(0,6,1)
&\subset\mathbf{C}(0,6,r(t))
\\
\label{becominggraphthm22}
M_{t}\cap\mathbf{C}(0,4,4)
&\subset\mathbf{C}(0,4,r(t))
\end{align}
for $r(t):=\lambda^{2\mathbf{n}}+Ct$ for all $t\in [0,t_2)$.
Note that $r(s_1)=C\lambda^{2\mathbf{n}}\leq 2^{-4}\lambda^{\mathbf{n}}=\rho_1$,
where we used $\lambda\leq\lambda_0$ and $\lambda_0$ small.
Hence with \eqref{becominggraphthm31} we can conclude
\begin{align}
\label{becominggraphthm33} 
M_{s_1}\cap\left(D_1^{\mathbf{n}}\times B^1\right)
=\mathrm{graph}(g_2).
\end{align}

We want to use Proposition \ref{flatbecomegraphprop}.
In order to do so, the $M_{s_1}$-measure in $E_1^{\mathbf{n}}\times B^1$ has to be small.
The idea is, that by Proposition \ref{localisingprop}
the $M_{s_1}$-measure in $E_1^{\mathbf{n}}\times B^1$
is bounded by the $M_{0}$-measure in some larger set $Y\times B^1$.
This set can then be estimated with \eqref{becominggraphthmb} and \eqref{becominggraphthmc2}.

By definition of $E_1^{\mathbf{n}}$ and the slab statement \eqref{becominggraphthm21}
we have
\begin{align}
\label{becominggraphthm54} 
M_{s_1}\cap(E_1^{\mathbf{n}}\times B^1)
\subset\mathbf{B}(0,6)
\cap\left(\mathbb{R}^{\mathbf{n}-1}\times\mathbf{B}^{2}\left((h,0),C_1\lambda^{\mathbf{n}}\right)\right)
\end{align}
for some constant $C_1\in (1,\infty)$.
Consider
\begin{align*}
Y^{\mathbf{n}}&:=\mathbf{B}^{\mathbf{n}}(0,8)\cap\left(\mathbb{R}^{\mathbf{n}-1}\times\mathbf{B}^1(h,2C_1\lambda^{\mathbf{n}})\right).
\end{align*}
We want to show
\begin{align}
\label{becominggraphthm51} 
\mathscr{H}^{\mathbf{n}}(M_{s_1}\cap\left(E_1^{\mathbf{n}}\times B^1\right))
\leq
C\mathscr{H}^{\mathbf{n}}\left(M_{0}\cap\left(Y^{\mathbf{n}}\times B^1\right)\right).
\end{align}
For $(u,v)\in\mathbb{R}^{\mathbf{n}-1}\times\mathbb{R}^2$ set $Q(u,v):=v$.
Consider $\eta,\zeta\in\mathcal{C}^{0,1}\left(\mathbb{R}\times\mathbf{B}(0,8)\right)$
given by
\begin{align*}
\eta(t,x)&:=\left(1-(2C_1\lambda^{\mathbf{n}})^{-2}(|Q(x)|^2+2\mathbf{n}t)\right)_+
\\
\zeta(t,x)&=\zeta(x):=\left(8^2-|x|^2\right)_+.
\end{align*}
Note that $\eta\in\mathcal{C}^2\left(\{\eta>0\}\right)$
and $\{\zeta(\cdot)\eta(0,\cdot)>0\}\subset Y\times B^1$.
Also $(\partial_t-\mathrm{div}_{M_t}D)\eta\leq 0$ on $\{\eta>0\}$, as $\mathrm{div}_{M_t}DQ\leq\mathbf{n}$.
Then Proposition \ref{localisingprop} with $y_0=0$, $L=\lambda^{-2\mathbf{n}}$ and $\varrho=8$
yields
\begin{align}
\label{becominggraphthm52}
\int_{\mathbb{R}^{\mathbf{n}+1}}
\left(\zeta(x)\eta(s_1,x)-2\mathbf{n}\lambda^{-2\mathbf{n}}s_1\right)^3
d\mu_{s_1}
\leq 
C\mathscr{H}^{\mathbf{n}}\left(M_{0}\cap\left(Y^{\mathbf{n}}\times B^1\right)\right).
\end{align}
For $x\in M_{s_1}\cap(E_1^{\mathbf{n}}\times B^1)$ we can estimate
$\zeta(x)\eta(s_1,x)-2\mathbf{n}\lambda^{-2\mathbf{n}}s_1\geq 1$,
where we used \eqref{becominggraphthm54}, $s_1=\lambda^{3\mathbf{n}}$, $\lambda\leq\lambda_0$ and $\lambda_0$ small.
Hence \eqref{becominggraphthm52} implies \eqref{becominggraphthm51}.

Let $C_L\in (1,\infty)$ be a constant depending on $L$,
which may increase in each step.
By $\sup|Df|\leq L$ 
we have $JF\leq C_L$.
Then in view of assumption \eqref{becominggraphthmb}
we obtain
\begin{align}
\label{becominggraphthm55}
\begin{split}
\mathscr{H}^{\mathbf{n}}\left(M_{0}\cap\left((Y^{\mathbf{n}}\cap D^{\mathbf{n}})\times B^1\right)\right)
&\leq 
C_L\mathscr{L}^{\mathbf{n}}(Y^{\mathbf{n}})
\leq 
C_L\lambda^{\mathbf{n}}\rho^\mathbf{n}.
\end{split}
\end{align}
On the other hand as $Y^{\mathbf{n}}\subset\mathbf{B}^{\mathbf{n}}(0,8)$ 
we see $Y^{\mathbf{n}}\setminus D^{\mathbf{n}}\subset E^{\mathbf{n}}$.
Thus by assumption \eqref{becominggraphthmc2}
we can estimate
\begin{align}
\label{becominggraphthm56} 
\mathscr{H}^{\mathbf{n}}\left(M_{0}\cap\left((Y^{\mathbf{n}}\setminus D^{\mathbf{n}})\times B^1\right)\right)
\leq
\lambda^{\mathbf{n}}\rho^{\mathbf{n}}.
\end{align}
Combining \eqref{becominggraphthm51} with \eqref{becominggraphthm55} and \eqref{becominggraphthm56}
we conclude
\begin{align}
\label{becominggraphthm57} 
\mathscr{H}^n\left(M_{s_1}\cap\left(E_1^{\mathbf{n}}\times B^1\right)\right)
\leq
C_L\lambda^{\mathbf{n}}\rho^{\mathbf{n}}.
\end{align}

Set $\beta:=\sqrt{t_0-s_1}
=\sqrt{\Lambda_0\lambda^2-\lambda^{3\mathbf{n}}}$
and $\rho_0:=2^{-3}$.
In particular
\begin{align}
\label{becominggraphthm41} 
c\Lambda_0\lambda^2
\leq\beta^2
\leq\Lambda_0\lambda^2
\leq\beta_0^2,
\end{align}
and $I=[t_0,t_0+16\sigma_0]\subset (s_1+\beta^2\rho_0^2,s_1+\beta_0^2\rho_0^2)$,
where $\beta_0$ is the constant from Proposition \ref{flatbecomegraphprop}.
Here we used $\lambda\leq\lambda_0$,
$\lambda_0$ small enough depending on $\Lambda_0$
and $\sigma_0$ small depedning on $\beta_0$.

Let $z=(\hat{z},0)\in\mathbf{B}^{\mathbf{n}}(0,4)\times\{0\}$ be arbitrary.
We want to use Proposition \ref{flatbecomegraphprop}
with $z_0=z$, $s_2=t_2$ 
and $D_0=\mathbf{B}^{\mathbf{n}}(\hat{z},2^{-2})\setminus E_1^{\mathbf{n}}$.
In view of \eqref{becominggraphthm41} and for $\Lambda_0$ large enough,
the slab statement \eqref{becominggraphthm21} with $r(s_1)\leq C\lambda^2$ implies \eqref{flatbecomegraphpropa1}.
By choice of $D_0$ and \eqref{becominggraphthm33} also \eqref{flatbecomegraphpropb1} holds.
Choosing $\lambda_0$ small enough depending on $\Lambda_1$ of Lemma \ref{flatgraphlem} for $L_0=L$,
we see that \eqref{becominggraphthm32} implies \eqref{flatbecomegraphpropb2}.
Choosing $\Lambda_0$ large depending on $L$ we can use
\eqref{becominggraphthm41} and \eqref{becominggraphthm57} to obtain \eqref{flatbecomegraphpropb3}.
At last we consider the points $\hat{a}_0^{\pm}:=\hat{z}\pm 2^{-4}\hat{\mathbf{e}}_{\mathbf{n}}$.
Then
\begin{align*}
|(\hat{a}_0^{+}-\hat{a}_0^{-})\cdot\hat{\mathbf{e}}_{\mathbf{n}}|=2^{-3}
> 4^{\mathbf{n}+1}\lambda^\mathbf{n}+2^{-3}\sqrt{\beta},
\end{align*}
where we used \eqref{becominggraphthm41}, $\lambda\leq\lambda_0$ 
and chose $\lambda_0$ small enough.
Now by definition of $E_1^{\mathbf{n}}$ we see that \eqref{flatbecomegraphpropb4} holds
for $\hat{a}_0=\hat{a}_0^{+}$ or $\hat{a}_0^{-}$.
Proposition \ref{flatbecomegraphprop} then yields a
$g_{z}:I\times\mathbf{B}^{\mathbf{n}}(z,2^{-3})\to\mathbb{R}$
with
\begin{align}
\label{becominggraphthm61} 
M_{t}\cap\mathbf{C}(z,2^{-3},2^{-3})
=\mathrm{graph}(g_{z}(t,\cdot)).
\end{align}
Note that for $t\in I$ we have
$t\geq t-s_1\geq\frac{t}{2}$
and $r(t)\leq 2^{-4}$,
where we estimated $\lambda\leq\lambda_0$ 
and we chose $\lambda_0$ small enough.
Hence \eqref{flatbecomegraphpropd} implies \eqref{becominggraphthme} with $g$ replaced by $g_{z}$
and by \eqref{becominggraphthm22} we see,
that \eqref{becominggraphthm61} actually holds for the larger cylinder $\mathbf{C}(z,2^{-3},4)$.
As $z\in\mathbf{B}^{\mathbf{n}}(0,4)\times\{0\}$ was arbitrary,
the $g_{z}$ can be combined to obtain the desired $g$.
\end{proof}
%
%
%
%
%
%
%
%
%
Theorem \ref{becominggraphthm} naturaly implies a global result.
\begin{cor}
\label{globalbecominggraphcor}
For every $L\in [1,\infty)$ 
there exists a $\Lambda_3\in (1,\infty)$
such that the following holds:
Let $\varrho\in (0,\infty)$, $K\in [1,\infty)$, $t_0=\Lambda_3K^2\varrho^2$, 
$T\in(t_0,\infty]$
and let $(M_t)_{t\in [0,T)}$ be a \mcf\; in $\mathbb{R}^{\mathbf{n}+1}$.
Assume
\begin{align}
\label{globalbecominggraphcora} 
M_{0}\subset \mathbb{R}^{\mathbf{n}}\times\chball{\varrho}.
\end{align}
Suppose there exists a closed subset 
$E_0\subset\mathbb{R}^{\mathbf{n}-1}\times\chball{\varrho}$ 
and a function
$f:\mathbb{R}^{\mathbf{n}}\setminus E_0\to\mathbb{R}$ 
with $\sup|Df|\leq L$ and
\begin{align}
\label{globalbecominggraphcorb} 
M_{0}\cap\left((\mathbb{R}^{\mathbf{n}}\setminus E_0)\times\mathbb{R}\right)
=\mathrm{graph}(f).
\end{align}
Also suppose
\begin{align}
\label{globalbecominggraphcorc} 
\mathscr{H}^{\mathbf{n}}\left(M_{0}\cap\left(\left(E_0\cap\mathbf{B}^{\mathbf{n}}(\hat{a},4\varrho)\right)\times\mathbb{R}\right)\right)
\leq K\varrho^{\mathbf{n}}
\end{align}
for all $\hat{a}\in\mathbb{R}^{\mathbf{n}-1}\times\{0\}$.
Then there exists a 
$g:[t_0,\infty)\times \mathbb{R}^{\mathbf{n}}\to\mathbb{R}$
with
\begin{align}
\label{globalbecominggraphcord}
M_{t}=\mathrm{graph}(g(t,\cdot))
\;\;\;\forall t\in [t_0,T)
\end{align}
\end{cor}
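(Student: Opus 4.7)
The strategy is to cover $\mathbb{R}^{\mathbf{n}}$ by $\mathbf{n}$-balls of a single large radius $\rho$, apply Theorem \ref{becominggraphthm} at each centre, and glue the resulting local graphs. By scaling I may assume $\varrho=1$; fix $\lambda:=\lambda_0(L)$ from Theorem \ref{becominggraphthm} and choose $\rho:=C_LK$ with the constant $C_L$ to be determined.

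For arbitrary $\hat a\in\mathbb{R}^{\mathbf{n}-1}\times\{0\}$ apply Theorem \ref{becominggraphthm} at $a=\hat a$, with $h=0$, scale $\rho$, and $D^{\mathbf{n}}:=\mathbf{B}^{\mathbf{n}}(\hat a,2\rho)\setminus E_0$, so $E^{\mathbf{n}}=E_0\cap\mathbf{B}^{\mathbf{n}}(\hat a,2\rho)$. The slab condition \eqref{becominggraphthma} and strip condition \eqref{becominggraphthmc1} both reduce to $\lambda^{2\mathbf{n}}\rho\geq 1$, automatic once $\rho$ exceeds an $L$-dependent constant. The crux is the measure bound \eqref{becominggraphthmc2}: cover $\mathbf{B}^{\mathbf{n}-1}(\hat a',2\rho)$ by $N\leq C\rho^{\mathbf{n}-1}$ balls of radius $2$ centred at points $\hat a_i'\in\mathbb{R}^{\mathbf{n}-1}$, set $\hat a_i:=(\hat a_i',0)$, and observe $E_0\cap(\mathbf{B}^{\mathbf{n}-1}(\hat a_i',2)\times\chball{1})\subset\mathbf{B}^{\mathbf{n}}(\hat a_i,4)$; hypothesis \eqref{globalbecominggraphcorc} then gives
\begin{equation*}
\mathscr{H}^{\mathbf{n}}\bigl(M_0\cap(E^{\mathbf{n}}\times\mathbb{R})\bigr)\leq N\cdot K\leq CK\rho^{\mathbf{n}-1}\leq\lambda^{\mathbf{n}}\rho^{\mathbf{n}}
\end{equation*}
once $\rho\geq CK\lambda^{-\mathbf{n}}$, which fixes $C_L$ and, via $t_0=\Lambda_0\lambda^2\rho^2$, matches the claimed $K^2$ scaling. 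Theorem \ref{becominggraphthm} then furnishes $g_{\hat a}$ with $M_t\cap\mathbf{C}(\hat a,\rho,\rho)=\mathrm{graph}(g_{\hat a}(t,\cdot))$ on $[t_0,t_0+\sigma_0\rho^2)\cap[t_0,T)$ and $\sup|Dg_{\hat a}(t_0,\cdot)|\leq C_L$ by \eqref{becominggraphthme}.

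To glue the local pieces I send the outer radius in Corollary \ref{heightboundcor} to infinity, yielding $M_t\subset\mathbb{R}^{\mathbf{n}}\times\overline{\chball{1}}$ for every $t\in[0,T)$. Hence $M_t\cap(\mathbf{B}^{\mathbf{n}}(\hat a,\rho)\times\mathbb{R})=M_t\cap\mathbf{C}(\hat a,\rho,\rho)$, so the local graphs agree on overlaps and combine to a global $g(t,\cdot):\mathbb{R}^{\mathbf{n}}\to\mathbb{R}$ realising \eqref{globalbecominggraphcord} on $[t_0,t_0+\sigma_0\rho^2)\cap[t_0,T)$, with Lipschitz bound $C_L$ at $t=t_0$. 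To extend to all of $[t_0,T)$, fix $T'\in(t_0,T)$ and apply Theorem \ref{stayinggraphicalthm} at each $\hat a$ with initial time $t_0$, cylinder $\mathbf{C}(\hat a,\rho,1+2\rho)$, $\delta=1$, $L$ replaced by $C_L$, and $\rho\geq 2\Lambda\sqrt{T'-t_0}$; this produces graphical representation on $\mathbf{B}^{\mathbf{n}}(\hat a,\rho/2)$ for every $t\in[t_0,T')$. Varying $\hat a$ and letting $T'\nearrow T$ yields the claim. The main obstacle is the covering-and-measure calibration in the second paragraph: the $O(\rho^{\mathbf{n}-1})$ balls needed to sweep out $\mathbf{B}^{\mathbf{n}-1}(\hat a',2\rho)$, combined with a per-ball measure budget $K$, are what force $\rho\sim K$ and thereby the sharp $K^2$-dependence of $t_0$.
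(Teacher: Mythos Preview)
Your strategy matches the paper's: normalise to $\varrho=1$, pick $\rho$ of order $K$ (times an $L$-dependent constant), verify the hypotheses of Theorem~\ref{becominggraphthm} at each centre by covering the intersection of $E_0$ with a $\rho$-ball by $O(\rho^{\mathbf{n}-1})$ unit-scale balls on which \eqref{globalbecominggraphcorc} applies, and glue using the preserved slab bound.

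One slip: you apply Theorem~\ref{becominggraphthm} only at centres $\hat a\in\mathbb{R}^{\mathbf{n}-1}\times\{0\}$, so the resulting balls $\mathbf{B}^{\mathbf{n}}(\hat a,\rho)$ cover only the strip $\mathbb{R}^{\mathbf{n}-1}\times(-\rho,\rho)$, not all of $\mathbb{R}^{\mathbf{n}}$. Consequently you cannot assemble a \emph{global} graph at time $t_0$, and your subsequent invocation of Theorem~\ref{stayinggraphicalthm} on cylinders of arbitrarily large radius is then unjustified. The paper applies at every $\hat x\in\mathbb{R}^{\mathbf{n}}$; since $E_0\subset\mathbb{R}^{\mathbf{n}-1}\times\chball{1}$, your covering argument goes through unchanged at such $\hat x$ (cover $E_0\cap\mathbf{B}^{\mathbf{n}}(\hat x,2\rho)$ by balls $\mathbf{B}^{\mathbf{n}}(\hat a_i,4)$ with $\hat a_i\in\mathbb{Z}^{\mathbf{n}-1}\times\{0\}$, still $O(\rho^{\mathbf{n}-1})$ of them), so the fix costs one line.

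On the time extension you take a different route from the paper. You re-apply Theorem~\ref{stayinggraphicalthm} on cylinders of radius $\rho'\geq 2\Lambda\sqrt{T'-t_0}$ and let $T'\nearrow T$; once the gap above is fixed this is correct and entirely self-contained. The paper instead observes that $M_{t_0}$ is an entire Lipschitz graph and invokes the global long-time existence result of Ecker--Huisken \cite[5.1]{eckerh1}, which immediately gives graphicalness for all $t\in[t_0,T)$. Your argument avoids the external citation at the price of an extra limiting step.
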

%
%
%
%
%
\begin{proof}
We may assume $\varrho=1$.
First note that by the comparison principle 
and \eqref{globalbecominggraphcora} we have
$M_{t}\subset \mathbb{R}^{\mathbf{n}}\times\chball{1}$
for all $t\in[0,T)$. 
This also follows from Corollary \ref{heightboundcor} with $R\to\infty$.
Choose $\sigma_0$, $\Lambda_0=\Lambda_0(L)$ and $\lambda_0=\lambda_0(L)$ according to 
Theorem \ref{becominggraphthm}.
Set 
\begin{align*}
\lambda_1^2:=\min\{\lambda_0^2,(2\Lambda_0)^{-1}\sigma_0\},
\;\;\;
\rho:=C_0\lambda_1^{-2\mathbf{n}}K
\end{align*}
for some constant $C_0\in (1,\infty)$ which will be chosen below.

Let $\hat{x}\in\mathbb{R}^{\mathbf{n}}$ be arbitrary
and set $A:=\mathbf{B}^{\mathbf{n}}(\hat{x},5\rho)\cap \left(\mathbb{Z}^{\mathbf{n}-1}\times\{0\}\right)$.
Then $\sharp A\leq C\rho^{\mathbf{n}-1}$
and $\mathbf{B}^{\mathbf{n}}(\hat{x},4\rho)\cap E_0\subset\bigcup_{\hat{a}\in A}\mathbf{B}^{\mathbf{n}}(\hat{a},4)$.
Hence by \eqref{globalbecominggraphcorc} we have
\begin{align*}
\mathscr{H}^{\mathbf{n}}\left(M_{0}
\cap\left(\left(E_0\cap \mathbf{B}^{\mathbf{n}}(\hat{x},4\rho)\right)\times\mathbb{R}\right)\right)
\leq C\rho^{\mathbf{n}-1}K
\leq\lambda_1^{2\mathbf{n}}\rho^{\mathbf{n}},
\end{align*}
where we used $\lambda_1\leq 1$ and chose $C_0$ large enough.
Using Theorem \ref{becominggraphthm}
with $t_1=0$, $t_2=\lambda_0^2\rho^2$, $x_0=(\hat{x},0)$, $\lambda=\lambda_1$, $h=0$
and $D=\mathbf{B}^{\mathbf{n}}(\hat{x},4\rho)\setminus E_0$
yields a 
$g_{\hat{x}}:\mathbf{B}^{\mathbf{n}}(\hat{x},\rho)\to\mathbb{R}$
with
\begin{align*}
M_{t_0}\cap\mathbf{C}(\hat{x},\rho,\rho)
=\mathrm{graph}(g_{\hat{x}}).
\end{align*}
Here we used $t_0=\Lambda_3 K^2=\Lambda_0\lambda_1^2\rho^2$
for $\Lambda_3=C_0^2\Lambda_0\lambda_1^{2-4\mathbf{n}}$.
As $\rho\geq 1$ and $\hat{x}\in\mathbb{R}^n$ was arbitrary, 
we can combine the $g_{\hat{x}}$
to obtain a 
$g_{t_0}:\mathbb{R}^{\mathbf{n}}\to\mathbb{R}$
with
\begin{align*}
M_{t_0}=\mathrm{graph}(g_{t_0}).
\end{align*}
Now by \cite[5.1]{eckerh1} 
we know there exists a unique graphical solution for all times,
which establishes the result.
\end{proof}

\begin{appendix}
\section{Appendix}
\label{appendix}
This appendix contains some details on the relation between height, tilt, curvature and graphical representation
of a manifold $M$.
In particular we show,
how curvature bounds combined with a slab condition yield graphical representability.
The main ingridient is the following result from Colding and Minicozzi.
%
%
%
%
\begin{secprop}[{\cite[2.4]{colding1}}]
\label{flatparaprop}
There exist $C\in (1,\infty)$ and $\alpha_0\in (0,1)$ such that the following holds:
Let $\alpha\in (0,\alpha_0]$, $R\in (0,\infty)$
and let $M$ be a submanifold of $\mathbb{R}^{\mathbf{n}+\mathbf{k}}$
with $0\in M$ and $\mathbf{T}(M,0)=\mathbb{R}^{\mathbf{n}}\times\{0\}^{\mathbf{k}}$. 
Suppose
\begin{align}
\label{flatparapropa2}
\partial M\cap\mathbf{B}(0,2R)=\emptyset,
\\
\label{flatparapropa1}
\sup_{x\in M\cap\mathbf{B}(0,2R)}\left|\mathbf{A}(M,x)\right|\leq\alpha R^{-1}.
\end{align}
Then there exists a $g\in\mathcal{C}^{2}\left(\mathbf{B}^{\mathbf{n}}(0,2R),\mathbb{R}^{\mathbf{k}}\right)$ with
$\mathrm{graph}(g)\subset M$ and
\begin{align} 
\label{flatparapropc}
g(0)=0,
\;\;\;\; Dg(0)=0,
\\
\label{flatparapropd}
\max\{R^{-1}\sup|g|,\sup|Dg|,R\sup|D^2g|\}\leq C\alpha.
\end{align}
\end{secprop}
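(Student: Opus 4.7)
The plan is to rescale to $R=1$, bootstrap the curvature bound into a $C^2$-bound on any local graph, and then extend the graph from the origin out to radius $2$ by a continuity-in-radius argument.

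First I would rescale so that $R=1$, which turns the curvature hypothesis into $|\mathbf{A}|\le\alpha$ on $\mathbf{B}(0,2)$. The key pointwise observation, read off from \eqref{secderivativerema}, is: whenever a piece of $M$ is represented as a $C^{2}$-graph of some $g$ over $\mathbb{R}^{\mathbf{n}}\times\{0\}^{\mathbf{k}}$ with $|Dg|\le 1/2$, one has
\begin{equation*}
|D^{2}g|\le C(1+|Dg|^{2})^{3/2}|\mathbf{A}|\le C\alpha.
\end{equation*}
Thus once a local graph with small gradient exists, the $C^{2}$-bound is automatic.

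Next I would initiate the graph at the origin. Since $\mathbf{T}(M,0)=\mathbb{R}^{\mathbf{n}}\times\{0\}^{\mathbf{k}}$ and $M$ is $C^{2}$, the implicit function theorem yields $r_{0}>0$ and a $C^{2}$-function $g_{0}\colon\mathbf{B}^{\mathbf{n}}(0,r_{0})\to\mathbb{R}^{\mathbf{k}}$ with $g_{0}(0)=0$, $Dg_{0}(0)=0$ and $\mathrm{graph}(g_{0})\subset M$. Define
\begin{equation*}
r^{*}:=\sup\Bigl\{r\in(0,2]:\exists\,g\in C^{2}(\mathbf{B}^{\mathbf{n}}(0,r),\mathbb{R}^{\mathbf{k}}),\;g(0)=0,\;Dg(0)=0,\;\sup|Dg|\le\tfrac12,\;\mathrm{graph}(g)\subset M\Bigr\}.
\end{equation*}
On any such $g$, the curvature bound above gives $|D^{2}g|\le C\alpha$, so integrating along radii from the origin (where $Dg=0$) yields $|Dg(\hat x)|\le C\alpha|\hat x|\le 2C\alpha$, which is $\le\tfrac14$ once $\alpha_{0}$ is chosen small. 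Integrating once more gives $|g(\hat x)|\le C\alpha|\hat x|^{2}\le 4C\alpha$. In particular the graphical representation on $\mathbf{B}^{\mathbf{n}}(0,r^{*})$ extends Lipschitz-continuously to the closed ball and the closure of $\mathrm{graph}(g)$ stays inside $\mathbf{B}(0,2)$.

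The heart of the argument is then to show $r^{*}=2$ by contradiction. Suppose $r^{*}<2$. For any $\hat x^{*}\in\partial\mathbf{B}^{\mathbf{n}}(0,r^{*})$, the Lipschitz extension gives a point $y^{*}=(\hat x^{*},g(\hat x^{*}))\in\overline{\mathrm{graph}(g)}\subset\overline{M\cap\mathbf{B}(0,2)}$; because $\partial M\cap\mathbf{B}(0,2)=\emptyset$ and $|y^{*}|<2$, we in fact have $y^{*}\in M$. By continuity of the tangent space, $\mathbf{T}(M,y^{*})$ still has tilt at most $1/4$ relative to $\mathbb{R}^{\mathbf{n}}\times\{0\}^{\mathbf{k}}$ (via \eqref{tiltderivativerema}), so the implicit function theorem yields a local graphical representation of $M$ over a ball $\mathbf{B}^{\mathbf{n}}(\hat x^{*},\delta)$. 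Uniqueness of a graph over a fixed subspace when the tilt is bounded away from $1$ forces this local graph to coincide with $g$ on the overlap with $\mathbf{B}^{\mathbf{n}}(0,r^{*})$; patching these local pieces together along $\partial\mathbf{B}^{\mathbf{n}}(0,r^{*})$ (which is compact, so finitely many $\delta$'s suffice) produces a $C^{2}$-graph on a strictly larger ball, contradicting the definition of $r^{*}$.

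Hence $r^{*}=2$, and the gradient, height and second-derivative estimates collected above establish \eqref{flatparapropc} and \eqref{flatparapropd}. The main obstacle is the continuation/patching step: one must be careful that the new local graph at each boundary point $y^{*}$ genuinely coincides with the existing $g$ on the overlap, so that extending produces a single-valued $C^{2}$-graph; this is what forces the tilt to be controlled strictly away from $1$ and why the initial threshold $\sup|Dg|\le 1/2$ in the definition of $r^{*}$ must be chosen strictly larger than the actual bound $2C\alpha$ one proves.
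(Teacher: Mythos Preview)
Your proposal is correct and follows essentially the same continuity-in-radius strategy as the paper's argument: initiate a local graph at the origin via the implicit function theorem, bootstrap the curvature bound \eqref{flatparapropa1} through \eqref{secderivativerema} into a strict improvement of the gradient threshold, and then extend across the boundary of the maximal graphical ball by another application of the implicit function theorem together with compactness of $\partial\mathbf{B}^{\mathbf{n}}(0,r^{*})$. The paper in fact cites this proposition from \cite[2.4]{colding1} without a printed proof; the proof contained (commented out) in the source differs from yours only cosmetically, phrasing the continuation as an open--closed argument on the set of admissible radii rather than as a supremum argument, and using the combined quantity $E(g)=\max\{R^{-1}\sup|g|,\sup|Dg|,R\sup|D^{2}g|\}\le C_{0}\alpha$ as the threshold instead of $\sup|Dg|\le\tfrac12$.
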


\begin{seccor}
\label{flatparacor}
There exist $C\in (1,\infty)$ and $\alpha_0\in (0,1)$ such that the following holds:
Let $\alpha\in (0,\alpha_0]$, $R\in (0,\infty)$, $L\in[1,\infty)$ $\hat{a}\in\mathbb{R}^{\mathbf{n}}$.
Let $f\in\mathcal{C}^{2}\left(\mathbf{B}^{\mathbf{n}}(\hat{a},2R),\mathbb{R}^{\mathbf{k}}\right)$ 
with $\sup|Df|\leq L$.
Set $M:=\mathrm{graph}(f)$ and assume
\begin{align}
\label{flatparacora}
\sup_{x\in M}\left|\mathbf{A}(M,x)\right|\leq\alpha R^{-1}.
\end{align}
Set $a:=(\hat{a},f(\hat{a}))$
and let $S\in\mathbf{SO}(\mathbf{n}+\mathbf{k})$ be such that 
$S[\mathbb{R}^{\mathbf{n}}\times\{0\}^{\mathbf{k}}]=\mathbf{T}(M,a)$.
Then there exists a local parametrisation 
$g\in\mathcal{C}^{2}\left(\mathbf{B}^{\mathbf{n}}(0,R),\mathbb{R}^{\mathbf{k}}\right)$ with 
\begin{align}
\label{flatparacorb}
M\cap\left(\mathbf{B}^{\mathbf{n}}(\hat{a},(2L)^{-1} R)\times\mathbb{R}^{\mathbf{k}}\right)
\subset S(\mathrm{graph}(g))+a
\subset M.
\end{align}
Furthermore $g$ satisfies
\begin{align}
\label{flatparacord}
g(0)=0,
\;\;\;\; Dg(0)=0,
\\
\label{flatparacorc}
\max\left\{R^{-1}\sup|g|+\sup|Dg|+R\sup|D^2g|\right\}\leq C\alpha.
\end{align}
\end{seccor}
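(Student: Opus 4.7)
The plan is to reduce directly to Proposition \ref{flatparaprop} by a rigid motion, then verify the first inclusion in \eqref{flatparacorb} by a short connectedness argument.

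First, define $\tilde{M} := S^{-1}(M - a)$. Since $S$ is an isometry, rigid motions preserve both the tangent space and the second fundamental form, so $0 \in \tilde{M}$, $\mathbf{T}(\tilde{M},0) = S^{-1}\mathbf{T}(M,a) = \mathbb{R}^{\mathbf{n}}\times\{0\}^{\mathbf{k}}$ by the defining property of $S$, and $|\mathbf{A}(\tilde{M},y)| = |\mathbf{A}(M, Sy+a)| \leq \alpha R^{-1}$ on $\tilde{M}$. The boundary hypothesis for the proposition is also immediate: every point of $\partial M$ is of the form $(\hat{x},f(\hat{x}))$ with $|\hat{x}-\hat{a}| \geq 2R$, hence $|x-a| \geq |\hat{x}-\hat{a}| \geq 2R$, so $\partial\tilde{M}\cap\mathbf{B}(0,2R) = \emptyset$. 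Applying Proposition \ref{flatparaprop} to $\tilde{M}$ produces a function $\tilde{g} \in \mathcal{C}^{2}(\mathbf{B}^{\mathbf{n}}(0,2R),\mathbb{R}^{\mathbf{k}})$ with $\mathrm{graph}(\tilde{g}) \subset \tilde{M}$, satisfying $\tilde{g}(0) = 0$, $D\tilde{g}(0) = 0$ and the desired $C\alpha$ bounds. Let $g := \tilde{g}|_{\mathbf{B}^{\mathbf{n}}(0,R)}$; then \eqref{flatparacord} and \eqref{flatparacorc} follow, and the second inclusion $S(\mathrm{graph}(g)) + a \subset M$ is automatic from $\mathrm{graph}(\tilde{g}) \subset \tilde{M}$.

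The remaining task is the first inclusion. For any $\hat{x} \in \mathbf{B}^{\mathbf{n}}(\hat{a},(2L)^{-1}R)$, set $x := (\hat{x},f(\hat{x}))$. Using $\sup|Df| \leq L$,
\begin{align*}
|x-a|^{2} = |\hat{x}-\hat{a}|^{2} + |f(\hat{x})-f(\hat{a})|^{2} \leq (1+L^{2})(2L)^{-2}R^{2} \leq \tfrac{1}{2}R^{2}.
\end{align*}
Thus $y := S^{-1}(x-a) \in \tilde{M}\cap\mathbf{B}(0, R/\sqrt{2})$, and in particular any writing $y = (\hat{y},\tilde{y})$ forces $|\hat{y}| \leq |y| < R$, so $\hat{y}$ stays strictly inside the domain of $g$. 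Now define
\begin{align*}
V := \bigl\{\hat{x} \in \mathbf{B}^{\mathbf{n}}(\hat{a},(2L)^{-1}R) : S^{-1}((\hat{x},f(\hat{x}))-a) \in \mathrm{graph}(g)\bigr\}.
\end{align*}
The set $V$ is nonempty (it contains $\hat{a}$, since $S^{-1}(0) = 0 = (0,g(0))$), relatively open in the connected ball $\mathbf{B}^{\mathbf{n}}(\hat{a},(2L)^{-1}R)$ because $\mathrm{graph}(g)$ is open in $\tilde{M}$, and relatively closed because the horizontal coordinate of $y_n = (\hat{y}_n, g(\hat{y}_n))$ stays in $\overline{\mathbf{B}^{\mathbf{n}}(0,R/\sqrt{2})}$ so continuity of $g$ passes to the limit. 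By connectedness $V$ is the whole ball, which is the first inclusion of \eqref{flatparacorb}.

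The only subtle point is the first inclusion: the proposition alone only gives $\mathrm{graph}(g) \subset \tilde{M}$, not equality, so one must rule out that a piece of $M$ near $a$ corresponds to a different sheet of $\tilde{M}$ that escapes the domain of $g$. The factor $(2L)^{-1}$ in the hypothesis is calibrated exactly so that the bound $|x-a| \leq \sqrt{1+L^{2}}\,|\hat{x}-\hat{a}|$ keeps the preimage strictly inside $\mathbf{B}(0,R)$, which is what makes the open-and-closed argument go through. Everything else is routine bookkeeping of the invariance of height, gradient, and Hessian estimates under the isometry $S$.
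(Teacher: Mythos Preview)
Your proof is correct and follows essentially the same approach as the paper: apply Proposition~\ref{flatparaprop} to $S^{-1}(M-a)$, then use the estimate $|x-a|^2\leq(1+L^2)(2L)^{-2}R^2\leq R^2/2$ to control the first inclusion. The only cosmetic difference is that you phrase the final step as an open--closed connectedness argument on the base ball, whereas the paper phrases it as showing $M\cap(\mathbf{B}^{\mathbf{n}}(\hat a,(2L)^{-1}R)\times\mathbb{R}^{\mathbf{k}})\cap\partial N=\emptyset$ via the same estimate; these are equivalent formulations.
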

%
%
\begin{proof}
Let $\alpha_0$ be from Proposition \ref{flatparaprop}.
As $\alpha\leq\alpha_0$ we can apply
Proposition \ref{flatparaprop} to the manifold $S^{-1}[M-a]$
to obtain a function $g\in\mathcal{C}^{2}\left(\mathbf{B}^{\mathbf{n}}(0,R),\mathbb{R}^{\mathbf{k}}\right)$,
which satisfies \eqref{flatparacord}, \eqref{flatparacorc} 
and $\mathrm{graph}(g)\subset S^{-1}[M-a]$.

Set $N:=S[\mathrm{graph}(g)]+a.$
We already know $N\subset M$
and want to show $M\cap\left(\mathbf{B}^{\mathbf{n}}(\hat{a},(2L)^{-1} R)\times\mathbb{R}^{\mathbf{k}}\right)\subset N$.
It suffices to prove
\begin{align}
\label{flatparacor11}
M\cap\left(\mathbf{B}^{\mathbf{n}}(\hat{a},(2L)^{-1} R)\times\mathbb{R}^{\mathbf{k}}\right)\cap\partial N=\emptyset.
\end{align}
Suppose this is false and there exists a point $y_0$ in this set.
Then we find sequences $(\hat{w}_m)_{m\in\mathbb{N}}$ and $(\hat{y}_m)_{m\in\mathbb{N}}$
with $\hat{w}_m\in\mathbf{B}^{\mathbf{n}}(0,R)$, $\hat{y}_m\in\mathbf{B}^{\mathbf{n}}(\hat{a},(2L)^{-1} R)$,
\begin{align*}
S(\hat{w}_m,g(\hat{w}_m))+a=(\hat{y}_m,f(\hat{y}_m))=:y_m
\end{align*}
for all $m\in\mathbb{N}$ and such that $\lim_{m\to\infty} y_m=y_0$, $\lim_{m\to\infty}|\hat{w}_m|=R$.
But for $|\hat{w}_m|$ we can estimate
\begin{align*}
|\hat{w}_m|^2
&\leq
|S(\hat{w}_m,g(\hat{w}_m))|^2
=
|y_m-a|^2
=
|\hat{y}_m-\hat{a}|^2
+ |f(\hat{y}_m)-f(\hat{a})|^2
\\&
\leq 
(2L)^{-2}R^2 + L^2|\hat{y}_m-\hat{a}|^2
\leq 
(1+L^2)(2L)^{-2}R^2
=\frac{R^2}{2}
\end{align*}
for all $m\in\mathbb{N}$.
Here we used $a=(\hat{a},f(\hat{a}))$, $\sup|Df|\leq L$ and $L\geq 1$.
Thus we obtain a contradiction.
This proves \eqref{flatparacor11} which establishes the result.
\end{proof}

%
%
%
%
We observe that, if for a family of submanifolds $(M_t)$ 
the tangent space $\mathbf{T}(M_{t},x)$ is never perpendicular to $\mathbb{R}^\mathbf{n}$,
then $(M_t)$ consists of a constant number of sheets.
%
%
\begin{seclem}
\label{constsheatlem}
Let $r,\Gamma_0\in (0,\infty)$, $\kappa_0\in [0,1)$, $t_1\in\mathbb{R}$, $t_2\in (t_1,\infty)$.
Let $M$ be a submanifold of $\mathbb{R}^{\mathbf{n}+\mathbf{k}}$
and consider 
$\Psi\in\mathcal{C}([t_1,t_2)\times M,\mathbb{R}^{\mathbf{n}+\mathbf{k}})$ 
such that
$\Psi_t:=\Psi(t,\cdot)\in\mathcal{C}^{1}(M,\mathbb{R}^{\mathbf{n}+\mathbf{k}})$ 
are embeddings for all $t\in [t_1,t_2)$. 
Set $M_t:=\Psi_t[M]$.
Suppose
\begin{align}
\label{constsheatlemc} 
\|\mathbf{T}(M_{t},x)_{\natural}-(\mathbb{R}^{\mathbf{n}}\times\{0\}^{\mathbf{k}})_{\natural}\| \leq\kappa_0<1
\end{align}
for all $t\in [t_1,t_2)$ and all $x\in M_t\cap\mathbf{C}(0,r,\Gamma_0)$.
Also suppose
\begin{align}
\label{constsheatlema}
\begin{split}
\partial M_t\cap\mathbf{B}^{\mathbf{n}}(0,r)\times\overline{\mathbf{B}^{\mathbf{k}}(0,\Gamma_0)}=\emptyset,
\\
M_t\cap\mathbf{B}^{\mathbf{n}}(0,r)\times\partial\mathbf{B}^{\mathbf{k}}(0,\Gamma_0)=\emptyset
\end{split}
\end{align}
for all $t\in [t_1,t_2)$.
Then there exists an $m_0\in\mathbb{N}\cup\{0\}$ such that
\begin{align}
\label{constsheatlemd} 
\sharp\left\{x\in M_t\cap\{\hat{y}\}\times\mathbf{B}^{\mathbf{k}}(0,\Gamma_0)\right\}=m_0
\end{align}
for all $\hat{y}\in\mathbf{B}^{\mathbf{n}}(0,\rho)$ and all $t\in [t_1,t_2)$.
\end{seclem}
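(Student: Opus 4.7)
\textbf{Plan for the proof of Lemma~\ref{constsheatlem}.}
The idea is to combine a covering-space argument in the spatial variable for each fixed~$t$ with a connectedness argument in~$t$.

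\emph{Spatial covering for fixed $t$.}
Let $\pi\colon\mathbb{R}^{\mathbf{n}+\mathbf{k}}\to\mathbb{R}^{\mathbf{n}}$ be the canonical projection. By \eqref{constsheatlemc}, at every $x\in M_t\cap\mathbf{C}(0,r,\Gamma_0)$ the restriction $d\pi|_{\mathbf{T}(M_t,x)}\colon\mathbf{T}(M_t,x)\to\mathbb{R}^{\mathbf{n}}$ is a linear isomorphism whose inverse has operator norm bounded by some $L_0=L_0(\kappa_0)$ (by \eqref{tiltderivativerema}). Hence $\pi_t:=\pi|_{M_t\cap\mathbf{C}(0,r,\Gamma_0)}$ is a $C^1$ local diffeomorphism into $\mathbf{B}^{\mathbf{n}}(0,r)$ whose local graphical inverses are all $L_0$-Lipschitz. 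I claim $\pi_t$ is proper: given $x_j=(\hat{x}_j,\tilde{x}_j)\in M_t\cap\mathbf{C}(0,r,\Gamma_0)$ with $\hat{x}_j\to\hat{y}_\infty\in\mathbf{B}^{\mathbf{n}}(0,r)$, compactness of $\overline{\mathbf{B}^{\mathbf{k}}(0,\Gamma_0)}$ produces a subsequential limit $x_\infty=(\hat{y}_\infty,\tilde{x}_\infty)\in\overline{M_t}$, and \eqref{constsheatlema} rules out both $x_\infty\in\partial M_t$ and $\tilde{x}_\infty\in\partial\mathbf{B}^{\mathbf{k}}(0,\Gamma_0)$, forcing $x_\infty\in M_t\cap\mathbf{C}(0,r,\Gamma_0)$. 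A proper local diffeomorphism is a covering onto its image; this image is open (local diffeomorphism), closed (properness), and lies in the connected simply connected set $\mathbf{B}^{\mathbf{n}}(0,r)$, so it is either empty or the whole disk. In the nonempty case the cover is trivial, yielding $M_t\cap\mathbf{C}(0,r,\Gamma_0)=\bigsqcup_{i=1}^{m(t)}\mathrm{graph}(u_{t,i})$ with $L_0$-Lipschitz maps $u_{t,i}\colon\mathbf{B}^{\mathbf{n}}(0,r)\to\mathbf{B}^{\mathbf{k}}(0,\Gamma_0)$. Thus \eqref{constsheatlemd} holds for this $t$ with $m_0$ replaced by the well-defined integer $m(t)$.

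\emph{Invariance in $t$.}
Since $[t_1,t_2)$ is connected, it suffices to show $t\mapsto m(t)$ is locally constant. Fix $t_0$, set $m_0:=m(t_0)$, and choose $p_1,\dots,p_{m_0}\in M$ with $\Psi(t_0,p_i)=x_i\in\{0\}\times\mathbf{B}^{\mathbf{k}}(0,\Gamma_0)$ the distinct sheet values over the origin. Joint continuity of $\Psi$ gives $\Psi(t,p_i)\to x_i$ as $t\to t_0$, so for $t$ near $t_0$ these $m_0$ points lie in $\mathbf{C}(0,r,\Gamma_0)$, are pairwise distinct (as $\Psi_t$ is an embedding), and must lie on $m_0$ distinct sheets of the decomposition at time $t$ (the $x_i$ do at $t_0$ and sheets stay disjoint under small perturbations), hence $m(t)\geq m_0$. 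For the reverse inequality, suppose $t_k\to t_0$ with $m(t_k)\geq m_0+1$; pick $m_0+1$ distinct points $x_k^0,\dots,x_k^{m_0}\in M_{t_k}$ over the origin, and by the same properness argument extract subsequential limits $x_\infty^l\in M_{t_0}\cap(\{0\}\times\mathbf{B}^{\mathbf{k}}(0,\Gamma_0))$. Only $m_0$ possible limits exist, so two sequences converge to a common point $x_*$. Near $x_*$, $M_{t_0}$ is a single $L_0$-Lipschitz graph on a small ball $B\subset\mathbf{B}^{\mathbf{n}}(0,r)$, and stability of this local graph structure under joint continuity of $\Psi$ combined with the uniform tilt bound forces the portion of $M_{t_k}$ in a slightly smaller cylinder over $B$ to consist of a single graph for $k$ large, contradicting the existence of two distinct preimages of the origin.

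\emph{Main obstacle.}
The delicate point is the final stability assertion: that ``local single-sheetedness'' at $t_0$ persists to nearby $t_k$ even though $\Psi$ is only continuous in $t$ (and $C^1$ in the spatial variable). Under joint continuity of $\Psi$, a compactness argument yields Hausdorff convergence $M_{t_k}\cap K\to M_{t_0}\cap K$ on compact subcylinders $K$ strictly interior to $\mathbf{C}(0,r,\Gamma_0)$, and any sequence of uniformly $L_0$-Lipschitz graphs that Hausdorff-converges to a single $L_0$-Lipschitz graph must itself consist of single graphs eventually; in the mean curvature flow applications where this lemma is used, $\Psi$ is smooth jointly in $(t,p)$, so the implicit function theorem with parameter makes this step immediate.
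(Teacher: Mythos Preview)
Your spatial argument is correct and takes a genuinely different route from the paper. The paper argues locally: at a fixed $\hat{y}$ it applies the implicit function theorem at each of the $N(\hat{y})$ fiber points to build $N(\hat{y})$ disjoint local graphs (giving lower semicontinuity of $N$), then observes that the compact set $W = (\{\hat{y}\}\times\overline{\mathbf{B}^{\mathbf{k}}(0,\Gamma_0)}) \setminus \bigcup_i \mathbf{B}(a_i,\delta_1)$ misses $M_t$, so a $\delta$-thickening of $W$ also misses $M_t$ (giving upper semicontinuity), and concludes by connectedness of $\mathbf{B}^{\mathbf{n}}(0,r)$. Your global covering-space argument (proper local diffeomorphism $\Rightarrow$ covering map $\Rightarrow$ trivial cover over the simply connected ball) is cleaner and immediately yields the full sheet decomposition together with the uniform Lipschitz bound, at the price of invoking a bit of topology; the paper's argument is entirely elementary. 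Both approaches need the same implicit hypothesis that $M_t$ is relatively closed in the open cylinder (so that accumulation points of $M_t$ inside the cylinder lie in $M_t$), which in the paper's applications is ensured by proper embeddedness.

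For the time direction the paper is terser than you: it writes only ``by continuity in time, this establishes the result''. Your lower bound $m(t)\geq m_0$ via tracking $\Psi(t,p_i)$ is fine. For the upper bound, however, your assertion that ``any sequence of uniformly $L_0$-Lipschitz graphs that Hausdorff-converges to a single $L_0$-Lipschitz graph must itself consist of single graphs eventually'' is false as stated: two parallel sheets at heights $0$ and $1/k$ Hausdorff-converge to a single sheet. What actually prevents sheet-merging here is that all the $M_{t_k}$ are images of the \emph{same} abstract manifold $M$ under embeddings $\Psi_{t_k}$ converging to the embedding $\Psi_{t_0}$; two fiber points $x_k^l,x_k^{l'}$ collapsing to one $x_*$ would force their $\Psi_{t_k}$-preimages (if subconvergent in $M$) to the same limit, contradicting local injectivity. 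Making this rigorous again needs some properness of $\Psi$ in $(t,p)$, which is exactly the obstacle you flag. You are right that joint smoothness of $\Psi$---which holds in every use of this lemma in the paper---settles it via the implicit function theorem with parameter, and in that respect your discussion is more honest than the paper's one-liner.
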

%
%
\begin{proof}
Fix $t\in [t_1,t_2)$.
For $\hat{x}\in\mathbf{B}^{\mathbf{n}}(0,r)$ set
\begin{align*}
A(\hat{x}):=\left\{a\in M_t\cap\{\hat{x}\}\times\mathbf{B}^{\mathbf{k}}(0,\Gamma_0)\right\},
\;\;\;
N(\hat{x}):=\sharp A(\hat{x}).
\end{align*}
Note that by \eqref{constsheatlemc} 
we can use the implicit function theorem
to see that the points in $A(\hat{x})$ are discrete.
Thus, by \eqref{constsheatlema} 
and as $\mathbf{B}^{\mathbf{k}}(0,\Gamma_0)$ is bounded
we observe that $N(\hat{x})$ has to be finite.

Consider arbitrary $\hat{y}\in \mathbf{B}^{\mathbf{n}}(0,r)$.
We want to show there exists some $\delta=\delta(\hat{y})$ such that
\begin{align}
\label{constsheatlem11}
N(\hat{x})=N(\hat{y})=:N
\end{align}
for all $\hat{x}\in\mathbf{B}^{\mathbf{n}}(\hat{y},\delta)$.
There exist $N$ 
different $a_i=(\hat{y},h_i)\in A(\hat{y})$.
In view of \eqref{constsheatlemc}
we can use the implicit function theorem,
to obtain a $\delta_1\in (0,r)$ and
$g_i\in\mathcal{C}^{2}\left(\mathbf{B}^{\mathbf{n}}(\hat{x}_i,2\delta_1),\mathbb{R}^{\mathbf{k}}\right)$
with $g_i(\hat{y})=h_i$,
such that 
\begin{align*}
M_t\cap\mathbf{B}(a_i,2\delta_1)\subset\mathrm{graph}(g_i)
\subset M_t\cap\mathbf{C}(0,r,\Gamma_0)
\end{align*}
for all $i=1,\ldots,N$.
Note that $\delta_1$ can be chosen 
such that the $\mathrm{graph}(g_i)$ are disjoint.
In particular this shows "$\geq$" in \eqref{constsheatlem11} for $\delta\leq\delta_1$.
Let
$W:=(\{\hat{y}\}\times\overline{\mathbf{B}^{\mathbf{k}}(0,\Gamma_0)})
\setminus\bigcup_{i=1}^{N}\mathbf{B}(a_i,\delta_1)$,
so $W\cap M_t=\emptyset$. 
Then by \eqref{constsheatlema} and compactness of $W$ 
there exists a $\delta\in (0,\delta_1]$ such that "$\leq$" holds in \eqref{constsheatlem11}, which proves \eqref{constsheatlem11}.

Set $m(t):=N(0)$.
By \eqref{constsheatlem11} we see $\{N(\hat{y})=m(t)\}$ is open and closed in $\mathbf{B}^{\mathbf{n}}(0,r)$.
As $\mathbf{B}^{\mathbf{n}}(0,r)$ is connected this shows \eqref{constsheatlemd}
for fixed $t$.
By continuity in time, this establishes the result.
\end{proof}

Combining this result with Proposition \ref{flatparaprop} yields
%
%
\begin{seclem}
\label{multigraphlem}
There exist $C\in (1,\infty)$ and $\alpha_1\in (0,1)$ such that the following holds:
Let $K,r\in (0,\infty)$, $\xi\in (0,K]$
and let $M$ be a submanifold of $\mathbb{R}^{\mathbf{n}+\mathbf{k}}$.
Assume $K\xi\leq\alpha_1$
\begin{align}
\label{multigraphlema1}
M\cap\mathbf{C}(0,2r,2r)
\subset\mathbf{C}(0,2r,\xi^2 r),
\\
\label{multigraphlema2}
\partial M\cap\mathbf{C}(0,2r,2r)=\emptyset.
\end{align}
Also suppose
\begin{align}
\label{multigraphlemb}
\sup_{x\in M\cap\mathbf{C}(0,2r,2r)}\left|\mathbf{A}(M,x)\right|\leq K^2 r^{-1},
\end{align}
Then there exists an $m_0\in\mathbb{N}\cup\{0\}$ 
and $g_i\in\mathcal{C}^{2}\left(\mathbf{B}^{\mathbf{n}}(0,r),\mathbb{R}^{\mathbf{k}}\right)$
such that
\begin{align}
\label{multigraphlemd} 
M\cap\mathbf{C}(0,r,r)=\bigcup_{i=1}^{m_0}\mathrm{graph}(g_i),
\end{align}
where the union on the right is disjoint
and we set $\bigcup_{i=1}^{0}G_i:=\emptyset$.
Moreover we have
\begin{align}
\label{multigraphleme}
\sup|g_i|\leq C\xi^2 r,
\;\;\;
\sup|Dg_i|\leq CK\xi,
\;\;\;
\sup|D^2g_i|\leq CK^2 r^{-2}
\end{align}
for all $i=1,\ldots,m_0$.
\end{seclem}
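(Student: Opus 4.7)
The plan is to establish graphicality by controlling the tilt of the tangent space at each point of $M \cap \mathbf{C}(0,r,r)$ (using Proposition~\ref{flatparaprop} together with the slab condition) and then invoking Lemma~\ref{constsheatlem} to pass from local graphical pieces to global graphs.

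For the tilt bound, fix $x_0 \in M \cap \mathbf{C}(0,r,r)$, let $T_{x_0} := \mathbf{T}(M,x_0)$, and choose $S \in \mathbf{SO}(\mathbf{n}+\mathbf{k})$ with $S[\mathbb{R}^{\mathbf{n}} \times \{0\}^{\mathbf{k}}] = T_{x_0}$. Set $R_0 := \min\{\alpha_0 r/K^2,\, r/4\}$, so that the curvature bound gives $R_0 |\mathbf{A}| \leq \alpha_0$ on $M \cap \mathbf{B}(x_0, 2R_0) \subset M \cap \mathbf{C}(0,2r,2r)$. Applying Proposition~\ref{flatparaprop} to $S^{-1}(M-x_0)$ produces $g_{x_0}$ with $g_{x_0}(0)=0$, $Dg_{x_0}(0)=0$ and (via \eqref{secderivativerema}) $\sup|D^2 g_{x_0}| \leq CK^2/r$. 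For any unit $v = S(\hat{u}',0) \in T_{x_0}$ with $\hat{u}' \in \mathbb{R}^{\mathbf{n}}$ of unit length, the curve $\gamma(s) := x_0 + S(s\hat{u}', g_{x_0}(s\hat{u}'))$ lies in $M$ and satisfies $|\gamma(s)-x_0-sv| \leq CK^2 s^2/r$. Projecting to $\mathbb{R}^{\mathbf{k}}$ and using $|\tilde\gamma(s)|, |\tilde{x}_0| \leq \xi^2 r$ yields
\[
s|\tilde v| \leq 2\xi^2 r + CK^2 s^2/r.
\]
The choice $s_* := \xi r/K$ is optimal and gives $|\tilde v| \leq CK\xi$ whenever $s_* \leq R_0$; this is automatic when $K^2 \geq 4\alpha_0$ (once $\alpha_1 \leq \alpha_0$). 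In the complementary regime $K^2 < 4\alpha_0$ with $\xi > K/4$, use $s = r/4$: since then $\xi \leq K \leq 4\xi$, both $\xi^2$ and $K^2$ are dominated by $CK\xi$, and the same bound follows.

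Second, the resulting tilt bound $\|(T_{x_0})_\natural - (\mathbb{R}^{\mathbf{n}} \times \{0\}^{\mathbf{k}})_\natural\| \leq CK\xi$ forces the $\mathbb{R}^{\mathbf{n}} \times \mathbb{R}^{\mathbf{n}}$-block $S_{11}$ to be invertible with uniformly bounded inverse (via $S_{11}^T S_{11} = I - S_{21}^T S_{21}$), so $\hat\Phi(u) := \hat{x}_0 + S_{11}u + S_{12}g_{x_0}(u)$ is a local diffeomorphism by the inverse function theorem and yields a local graph $G_{x_0} : \mathbf{B}^{\mathbf{n}}(\hat{x}_0, cR_0) \to \mathbb{R}^{\mathbf{k}}$ inside $M$. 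Setting $\kappa_0 := 1/2$ and $\Gamma_0 := r/2$, the tilt bound (for $\alpha_1$ small enough) plus $\partial M \cap \mathbf{C}(0,2r,2r) = \emptyset$ and the slab inclusion $M \cap \mathbf{C}(0,r,r) \subset \mathbf{C}(0,r,\xi^2 r) \subset \mathbf{C}(0,r,r/2)$ verify the hypotheses of Lemma~\ref{constsheatlem} applied to the constant family $\Psi_t \equiv \Psi$. That produces the integer $m_0$ and the constancy of the fiber count over $\mathbf{B}^{\mathbf{n}}(0,r)$. Since $\mathbf{B}^{\mathbf{n}}(0,r)$ is connected and local graphs are uniquely determined by any point on them, the $G_{x_0}$ patch into $m_0$ disjoint global graphs $g_1, \ldots, g_{m_0} : \mathbf{B}^{\mathbf{n}}(0,r) \to \mathbb{R}^{\mathbf{k}}$ whose union covers $M \cap \mathbf{C}(0,r,r)$.

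The bounds in \eqref{multigraphleme} are then immediate: $\sup|g_i| \leq \xi^2 r$ from the slab, $\sup|Dg_i| \leq CK\xi$ from the tilt via \eqref{tiltderivativerema}, and $\sup|D^2 g_i| \leq CK^2/r$ from the curvature bound via \eqref{secderivativerema} (once $|Dg_i|$ is small). The main obstacle is the tilt estimate, specifically the sub-regime $\xi > K/4$: there the optimal radius $s_* = \xi r/K$ exceeds $R_0 = r/4$, and one must recover a bound of order $K\xi$ from the suboptimal choice $s = r/4$, leveraging the fact that the constraint $\xi \leq K$ forces $\xi$ and $K$ to be of the same order in this regime.
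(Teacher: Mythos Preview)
Your proof is correct and follows the same strategy as the paper: bound the tilt at every point of $M\cap\mathbf{C}(0,r,r)$ by $CK\xi$ using Proposition~\ref{flatparaprop} together with the slab condition, then feed this into Lemma~\ref{constsheatlem} (with the constant family $M_t\equiv M$) to get a constant sheet count and hence the global graphs $g_i$; the estimates \eqref{multigraphleme} then follow from the slab, \eqref{tiltderivativerema}, and \eqref{secderivativerema} exactly as you say.

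The one place where the paper is more efficient is the choice of scale in the tilt argument. Rather than taking the largest admissible radius $R_0=\min\{\alpha_0 r/K^2,\,r/4\}$ and then optimising over an interior radius $s$ (which forces your case split at $\xi>K/4$), the paper applies Proposition~\ref{flatparaprop} directly with $R=\xi r/(2K)$. The hypothesis $\xi\le K$ guarantees $2R\le r$, so the ball stays in $\mathbf{C}(0,2r,2r)$, and the curvature bound gives $\alpha=K^2R/r=K\xi/2\le\alpha_0$. Then $\sup|g|\le C\alpha R=C\xi^2 r$, and comparing the tangent vectors $\tau_i=S\mathbf{e}_i$ with the secants $R^{-1}\bigl(S(R\hat{\mathbf{e}}_i,g(R\hat{\mathbf{e}}_i))\bigr)$ projected to $\mathbb{R}^{\mathbf{n}}\times\{0\}^{\mathbf{k}}$ yields tilt $\le C\xi^2 r/R=CK\xi$ in one line, with no regimes to separate. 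Your sub-case works because $\xi\le K$ forces $\xi\sim K$ there, but the paper's direct choice of the optimal scale is what the hypothesis $\xi\le K$ is really designed for.
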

%
%
\begin{proof}
We may assume $r=1$.
Let $a\in M\cap\mathbf{C}(0,1,1)$ be arbitrary.
We want to show
\begin{align}
\label{multigraphlem11}
\left\|(\mathbb{R}^{\mathbf{n}}\times\{0\}^{\mathbf{k}})_{\natural}-\mathbf{T}(M,a)_{\natural}\right\|\leq CK\xi.
\end{align}
Consider $S\in\mathbf{SO}(\mathbf{n}+\mathbf{k})$ with $S(\mathbb{R}^{\mathbf{n}}\times\{0\}^{\mathbf{k}})=\mathbf{T}(M,a)$.
Set $\tau_i:=S(\mathbf{e}_{i})$, $i\in\{1,\ldots,\mathbf{n}\}$.
For $\xi_0$ small enough we can apply Proposition \ref{flatparaprop}
with $R:=(2K)^{-1}\xi$ and $\alpha=K\xi$ to the manifold $S^{-1}[M-a]$,
to obtain a function $g\in\mathcal{C}^{2}(\mathbf{B}^{\mathbf{n}}(0,2R),\mathbb{R}^{\mathbf{k}})$
with $g(0)=0$, $\sup|g|\leq C\xi^2$ 
and $S[\mathrm{graph}(g)+a]\subset M$.
Here we used $K\xi\leq\alpha_1$ and chose $\alpha_1\leq\alpha_0$.

For $i\in\{1,\ldots,\mathbf{n}\}$
define $v_i\in\mathrm{graph}(g)$ and $w_i\in\mathbb{R}^{\mathbf{n}}\times\{0\}^{\mathbf{k}}$ by
\begin{align*}
v_i&:=\left(R\hat{\mathbf{e}}_i,g(R\hat{\mathbf{e}}_i)\right),
\;\;\;
w_i
:=
Sv_i
-\left(\{0\}^{\mathbf{n}}\times\mathbb{R}^{\mathbf{k}}\right)_{\natural}
\left(Sv_i\right)
\end{align*}
Using $S(\mathbf{e}_{i})=\tau_i$ and \eqref{multigraphlema1}
we can estimate
\begin{align*}
|R\tau_i-w_i|
&\leq
|S\left(0,g(R\hat{\mathbf{e}}_i)\right)|
+|\left(\{0\}^{\mathbf{n}}\times\mathbb{R}^{\mathbf{k}}\right)_{\natural}
\left(Sv_i\right)|
\leq C\xi^2.
\end{align*}
Now set $\tilde{w}_i:=Rw_i$ for $i\in\{1,\ldots,\mathbf{n}\}$.
As $R=(2K)^{-1}\xi$, this yields $|\tau_i-\tilde{w}_i|\leq CK\xi$.
The $(\tilde{w}_i)$ lie in $\mathbb{R}^{\mathbf{n}}\times\{0\}^{\mathbf{k}}$
and the $(\tau_i)$ form an orthonormal basis of $\mathbf{T}(M,0)$,
so this establishes \eqref{multigraphlem11} (see \cite[8.9(5)]{allard}).

In view of \eqref{multigraphlem11} and $CK\xi\leq C\alpha_1<1$,
we can use Lemma \ref{constsheatlem} 
with $M_t\equiv M$ and $\Gamma_0=r$ to obtain the $g_i$. For the bounds in \eqref{multigraphleme} use
\eqref{multigraphlema1}, \eqref{multigraphlemb} and \eqref{multigraphlem11}
combined with \eqref{tiltderivativerema} and \eqref{secderivativerema}.
\end{proof}

%
%
%
%
We end the appendix by giving an alternative condition to
\eqref{flatbecomegraphpropb3} or \eqref{becominggraphthmc2}.
%
%
\begin{seclem}
\label{pcurveboundlem}
There exists a $C\in (1,\infty)$
such that the following holds:
Let $p\in (\mathbf{n},\infty)$, $\varrho,K\in (0,\infty)$, $\gamma,\delta\in (0,1]$, $y_0\in\mathbb{R}^{\mathbf{n}+\mathbf{k}}$
and let $M$ be a submanifold of $\mathbb{R}^{\mathbf{n}+\mathbf{k}}$.
Assume $\partial M\cap \mathbf{B}(y_0,2\varrho)=\emptyset$,
\begin{align}
\label{pcurveboundlema}
M\cap\mathbf{B}(y_0,2\varrho)
\subset\mathbf{C}(y_0,2\varrho,\gamma\varrho)
\end{align}
and
\begin{align}
\label{pcurveboundlemb}
\varrho^{-\mathbf{n}}\mathscr{H}^{\mathbf{n}}\left(M\cap\mathbf{B}(y_0,2\varrho)\right)
+\varrho^{p-\mathbf{n}}\int_{\mathbf{B}(y_0,2\varrho)}|\mathbf{H}(M,x)|^pd\mu_M(x)
\leq K.
\end{align}
Then
$\mathscr{H}^{\mathbf{n}}\left(M\cap\mathbf{C}(y_0,\delta\varrho,\varrho)\right)
\leq C 2^p(p-\mathbf{n})^{-1}K(\gamma+\delta)^{\mathbf{k}}\delta^{\mathbf{n}-\mathbf{k}}\varrho^{\mathbf{n}}$.
\end{seclem}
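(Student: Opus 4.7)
The plan is to combine a covering argument with Allard's monotonicity formula for varifolds with mean curvature in $L^p$, $p>\mathbf{n}$.

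First, since $\delta\leq 1$ yields $\mathbf{C}(y_0,\delta\varrho,\varrho)\subset\mathbf{B}(y_0,\sqrt{1+\delta^2}\,\varrho)\subset\mathbf{B}(y_0,2\varrho)$, the slab hypothesis \eqref{pcurveboundlema} gives
\begin{align*}
M\cap\mathbf{C}(y_0,\delta\varrho,\varrho)\subset M\cap\mathbf{C}(y_0,\delta\varrho,\gamma\varrho),
\end{align*}
so it suffices to bound the area of the right-hand side.

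Next I would cover $\mathbf{C}(y_0,\delta\varrho,\gamma\varrho)$ by balls $\mathbf{B}(w_i,\delta\varrho)$, $i=1,\ldots,N$, with the centers $w_i$ inside the cylinder. In each of the $\mathbf{n}$ tangent directions the radius $\delta\varrho$ already spans the cross-section, whereas in each of the $\mathbf{k}$ normal directions one needs at most $\lceil\gamma/\delta\rceil$ balls, so a standard grid argument yields $N\leq C((\gamma+\delta)/\delta)^{\mathbf{k}}$. For each $i$ with $M\cap\mathbf{B}(w_i,\delta\varrho)\cap\mathbf{C}(y_0,\delta\varrho,\gamma\varrho)\neq\emptyset$, select $z_i$ in this intersection; then $|z_i-y_0|\leq\sqrt{2}\,\varrho$, so $\mathbf{B}(z_i,R)\subset\mathbf{B}(y_0,2\varrho)$ for $R:=(2-\sqrt{2})\varrho$, and $M\cap\mathbf{B}(w_i,\delta\varrho)\subset M\cap\mathbf{B}(z_i,2\delta\varrho)$.

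To bound $\mu_M(\mathbf{B}(z_i,2\delta\varrho))$ I would invoke the $L^{p}$-version of the monotonicity formula, namely
\begin{align*}
r^{-\mathbf{n}}\mu_M(\mathbf{B}(z,r))\leq\frac{C\,2^{p}}{p-\mathbf{n}}\left(R^{-\mathbf{n}}\mu_M(\mathbf{B}(z,R))+R^{p-\mathbf{n}}\int_{\mathbf{B}(z,R)}|\mathbf{H}|^{p}\,d\mu_M\right),
\end{align*}
valid for $0<r\leq R$ whenever $\mathbf{B}(z,R)\cap\partial M=\emptyset$. Applying this with $r=2\delta\varrho$ and $R=(2-\sqrt{2})\varrho$, the bracketed expression is dominated by a dimensional constant times $K$ via hypothesis \eqref{pcurveboundlemb}, giving $\mu_M(\mathbf{B}(z_i,2\delta\varrho))\leq C\,2^{p}(p-\mathbf{n})^{-1}K(\delta\varrho)^{\mathbf{n}}$. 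Summing over the $N$ balls and using $N\leq C((\gamma+\delta)/\delta)^{\mathbf{k}}$ produces the asserted bound.

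The main technical obstacle is establishing the $L^{p}$-monotonicity inequality above with the explicit constant $C\,2^{p}(p-\mathbf{n})^{-1}$. Starting from the standard first-variation identity
\begin{align*}
\sigma^{-\mathbf{n}}\mu_M(\mathbf{B}(z,\sigma))\leq R^{-\mathbf{n}}\mu_M(\mathbf{B}(z,R))+\int_{\sigma}^{R}t^{-\mathbf{n}}\int_{\mathbf{B}(z,t)}|\mathbf{H}|\,d\mu_M\,dt,
\end{align*}
Hölder's inequality applied to the inner integral introduces a factor $\int_{\sigma}^{R}t^{-\mathbf{n}/p}\,dt=(1-\mathbf{n}/p)^{-1}(R^{1-\mathbf{n}/p}-\sigma^{1-\mathbf{n}/p})$, whose prefactor $p(p-\mathbf{n})^{-1}$ captures the singularity as $p\searrow\mathbf{n}$; Young's inequality then decouples the $\mu_M$ and $\|\mathbf{H}\|_{L^{p}}$ contributions into the linear combination on the right-hand side, and the factor $2^{p}$ absorbs the linear $p$-dependence via $p\leq 2^{p}$ along with scaling constants from the covering.
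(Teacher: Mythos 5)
Your overall strategy coincides with the paper's: reduce to the thin cylinder $\mathbf{C}(y_0,\delta\varrho,\gamma\varrho)$ via the slab condition, cover it by about $C\bigl((\gamma+\delta)/\delta\bigr)^{\mathbf{k}}$ balls of radius comparable to $\delta\varrho$, apply a monotonicity formula with $L^p$ mean curvature in each, and sum. The paper covers with grid points $A:=\mathbf{C}(0,2\delta,2\gamma)\cap\delta\mathbb{Z}^{\mathbf{n}+\mathbf{k}}$ rather than selected points of $M$, but that is immaterial; the covering bookkeeping is the same.

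The gap is in the ``decoupled'' $L^p$ monotonicity inequality you assert. Simon's formula \cite[4.3.2]{simon}, which the paper cites and uses directly, reads (for $\Gamma:=\|\mathbf{H}\|_{L^p(\mathbf{B}(z,R))}$, $0<\sigma\leq R$)
\begin{align*}
\bigl(\sigma^{-\mathbf{n}}\mu_M(\mathbf{B}(z,\sigma))\bigr)^{1/p}
\leq
\bigl(R^{-\mathbf{n}}\mu_M(\mathbf{B}(z,R))\bigr)^{1/p}
+\frac{\Gamma}{p-\mathbf{n}}\bigl(R^{1-\mathbf{n}/p}-\sigma^{1-\mathbf{n}/p}\bigr),
\end{align*}
and decoupling it with $(a+b)^p\leq 2^{p-1}(a^p+b^p)$ yields a coefficient $2^{p-1}$ on the density term and $2^{p-1}(p-\mathbf{n})^{-p}$ on the curvature term --- not $C\,2^p(p-\mathbf{n})^{-1}$ on both, as you claim. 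For $p$ close to $\mathbf{n}$ the true coefficient $(p-\mathbf{n})^{-p}$ is far larger than $(p-\mathbf{n})^{-1}$, and for large $p$ there should be no $(p-\mathbf{n})^{-1}$ on the density term at all; in neither regime can your stated inequality be established. Moreover the sketch ``H\"older introduces $\int_\sigma^R t^{-\mathbf{n}/p}\,dt$, then Young decouples'' hides the essential step: H\"older gives $\int_{\mathbf{B}(z,t)}|\mathbf{H}|\leq\Gamma\,\mu_M(\mathbf{B}(z,t))^{1-1/p}$, which still contains $\mu_M(\mathbf{B}(z,t))$, and one must pass to the function $u(t):=t^{-\mathbf{n}}\mu_M(\mathbf{B}(z,t))$ and integrate the differential inequality $(u^{1/p})'(t)\geq-\tfrac{\Gamma}{p}t^{-\mathbf{n}/p}$; a direct H\"older--Young chain does not produce the $\int t^{-\mathbf{n}/p}$ factor. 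The remedy is simply to cite Simon's formula as the paper does, apply it at each center, and raise to the $p$-th power at the end. (For what it is worth, the paper's own displayed estimate ``$\leq 2^pK/(p-\mathbf{n})$'' suffers from the same $p$-bookkeeping imprecision, so the constant in the lemma's conclusion should probably read $(p-\mathbf{n})^{-p}$ rather than $(p-\mathbf{n})^{-1}$; this does not affect how the lemma is used via Remark~\ref{flatbecomegraphrem}.)
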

\begin{proof}
We may assume $y_0=0$ and $\varrho=2$.
Set $A:=\mathbf{C}(0,2\delta,2\gamma)\cap \delta\mathbb{Z}^{\mathbf{n}+\mathbf{k}}$.
Let $a\in A$, then $\mathbf{B}(a,1)\subset\mathbf{B}(0,4)$.
By \eqref{pcurveboundlemb} and Simon's monotonicity formula \cite[4.3.2]{simon}
with $R=1$, $r=2\delta$ and $\Gamma^p=K$
we can estimate
\begin{align*}
(2\delta)^{-\mathbf{n}}\mu_M\left(\mathbf{B}(a,2\delta)\right)
&\leq
\left(\left(\mu_M\left(\mathbf{B}(a,1)\right)\right)^{\frac{1}{p}}
+\frac{K^{\frac{1}{p}}(1-\delta^{\frac{p-\mathbf{n}}{p}})}{p-\mathbf{n}}\right)^p
\leq
\frac{2^pK}{p-\mathbf{n}}.
\end{align*}
By assumption \eqref{pcurveboundlema} we have 
$M\cap\mathbf{C}(0,\delta,2)
\subset\bigcup_{a\in A}\mathbf{B}(a,2\delta)$
and also $\sharp A\leq C(\gamma\delta^{-1}+1)^{\mathbf{k}}$.
This establishes the result.
\end{proof}
\end{appendix}

\vspace*{1cm}
\noindent
Ananda Lahiri\\
Max Planck Institute for Gravitational Physics\\
(Albert Einstein Institute)\\
Am M\"uhlenberg 1, 14476 Potsdam, Germany\\
Ananda.Lahiri@aei.mpg.de
\end{document}